\documentclass[a4paper,twoside,11pt,english,intlimits,openany]{article}

\usepackage{latexsym,amsfonts,amssymb,exscale,enumerate}
\usepackage{amsmath}
\usepackage{amsthm}
\usepackage{amssymb}
\usepackage{geometry}
\usepackage{fancyhdr,times,euler,euscript,color}
\usepackage{bbm}
\usepackage{hyperref}
\usepackage{etex}
\usepackage{multicol}
\usepackage{titlesec}
\usepackage{comment}
\usepackage{appendix}
\usepackage{etoolbox}
\makeatletter
\patchcmd{\ttlh@hang}{\parindent\z@}{\parindent\z@\leavevmode}{}{}
\patchcmd{\ttlh@hang}{\noindent}{}{}{}
\makeatother

\geometry{top=2.3cm, bottom=3.2cm, left=2.5cm , right=2.5cm}


\usepackage{graphicx}

\usepackage[all]{xy}
\SelectTips{cm}{}

\usepackage{pstricks}

\psset{linewidth=0.3pt,dimen=middle}
\psset{xunit=.70cm,yunit=0.70cm}
\psset{arrowsize=1pt 5,arrowlength=0.6,arrowinset=0.7}
\usepackage{tikz}

\titleformat{\chapter}[frame]
{\normalsize}%
{\filright\sffamily\Large%
\enspace Chapitre \thechapter\enspace}%
{5pt}
{\rule{0pt}{30pt}\sffamily\Huge\bfseries\filcenter}

\titlespacing*{\chapter}{-1cm}{-2cm}{1cm}

\titleformat{\section}[block]{\scshape\filcenter\Large}{\thesection.}{.5em}{}
\titleformat{\subsection}[block]{\bfseries\filcenter\large}{\thesubsection.}{.5em}{}
\titleformat{\subsubsection}[runin]{\bfseries}{\thesubsubsection.}{.5em}{}[.]

\swapnumbers
  	\newtheorem{theorem}[subsubsection]{Theorem}
  	
  	\newtheorem{lemma}[subsubsection]{Lemma}
  	\newtheorem{proposition}[subsubsection]{Proposition}
\theoremstyle{definition}
	\newtheorem{definition}[subsubsection]{Definition}
	
	\newtheorem{remark}[subsubsection]{Remark}
	\newtheorem{example}[subsubsection]{Example}

\pagestyle{plain}



\setcounter{secnumdepth}{4}
\itemsep=0.2mm


\input{macros.tex}



\newcommand{\er}[2]{\,_{#1}{#2}}

\newcommand{\ere}[2]{\,_{#1}{#2}_{#1}}
\newcommand{\ER}{\er{E}{R}}

\newcommand{\ERE}{\ere{E}{R}}
\newcommand{\cl}[1]{\overline{#1}}
\newcommand{\Ker}{\text{Ker}}
\newcommand{\Mon}{\text{Mon}}

\newcommand{\deghecke}{\mathcal{AH}^{\text{deg}}}
\newcommand{\aob}{\mathcal{AOB}}


\def\dar[#1,#2,#3]{\ar@<0.8ex>[#1] ^{#3} \ar@<-0.8ex>[#1] _{#2} }

\newcommand{\Saux}{S^{\amalg}}


\newcommand{\Ccontext}{\mathbf{C} \mathcal{C}}

\newcommand{\Eo}{E^{\ell (1)}}
\newcommand{\Ro}{R^{\ell (1)}}
\newcommand{\So}{S^{\ell (1)}}


\newcommand{\tilda}[1]{\widetilde{\widehat{#1}}}


\newcommand{\catego}[1]{\mathbf{#1}}

\begin{document}
\thispagestyle{empty}

\begin{center}

\begin{Large}\begin{uppercase}
{Rewriting modulo isotopies in pivotal linear~$(2,2)$-categories.}
\end{uppercase}\end{Large}

\vskip+8pt

\bigskip\hrule height 1.5pt \bigskip

\vskip+10pt

\begin{large}\begin{uppercase}
{Benjamin Dupont}
\end{uppercase}\end{large}

\vskip+40pt

\begin{small}\begin{minipage}{14cm}
\noindent\textbf{Abstract --}
In this paper, we study rewriting modulo a set of algebraic axioms in categories enriched in linear categories, called linear~$(2,2)$-categories. We introduce the structure of linear~$(3,2)$-polygraph modulo as a presentation of a linear~$(2,2)$-category by a rewriting system modulo algebraic axioms. 
We introduce a symbolic computation method in order to compute linear bases for the vector spaces of $2$-cells of these categories. In particular, we study the case of pivotal $2$-categories using the isotopy relations given by biadjunctions on $1$-cells and cyclicity conditions on $2$-cells as axioms for which we rewrite modulo. By this constructive method, we recover the bases of normally ordered dotted oriented Brauer diagrams in te affine oriented Brauer linear~$(2,2)$-category.

\bigskip

\smallskip\noindent\textbf{Keywords -- Rewriting modulo, Linear polygraphs, Higher dimensional linear categories.}

\smallskip\noindent\textbf{M.S.C. 2010 -- 68Q42, 18D05.} 
\end{minipage}\end{small}
\end{center}

\vspace{0.8cm}

\begin{center}
\begin{small}\begin{minipage}{12cm}
\renewcommand{\contentsname}{}
\setcounter{tocdepth}{2}
\tableofcontents
\end{minipage}
\end{small}
\end{center}

\clearpage

\section*{Introduction}

In representation theory, many families of algebras admitting diagrammatic presentations by generators and relations emerged, as for instance Temperley-Lieb algebras \cite{TemperleyLieb71}, Brauer algebras \cite{Brauer37}, Birman-Wenzl algebras \cite{BirmanWenzl89}, Jones' planar algebras \cite{Jones99}, or Khovanov-Lauda-Rouquier algebras \cite{KL1}. These algebras can often be interpreted as categories enriched in linear categories, that is with a structure of $\mathbb{K}$-vector space (for a given field $\mathbb{K}$) on each space of $2$-cells between two $1$-cells. Such a $2$-category will be called a linear~$(2,2)$-category, and its $2$-cells will be depicted using string diagrammatic representation. In general, the presentations of these algebras admit a great number of relations, some of them being given by the algebraic structure, yielding a lack of efficient tools to compute in these structures. In particular, many of these categories have an additionnal pivotal structure, interpreted by duality on $1$-cells yielding unit and counit $2$-cells, diagrammatically represented by caps and cups satisfying isotopy relations. In this structure, two isotopic diagrams represent the same $2$-cell \cite{CKS00}, so that the computations are difficult to implement. Many linear~$(2,2)$-categories arising in representation theory admit a pivotal structure, such as for instance the category of $\mathfrak{gl}_n$-webs encoding the representation theory of the Lie algebra $\mathfrak{gl}_n$ \cite{CKM14,ELI15}, the Khovanov-Lauda-Rouquier $2$-categorification of a quantum group \cite{KL3,ROU08} and the Heisenberg categories categorifying the Heisenberg algebra \cite{KHO14}. The main objective of this paper is to present a symbolic computation method to compute in these linear~$(2,2)$-categories using the theory of rewriting. We use rewriting modulo the isotopy axioms of the pivotal structure to facilitate the analysis of confluence, and to compute linear bases for each space of $2$-cells using suited notions of termination and confluence modulo.

\subsubsection*{Algebraic rewriting and polygraphs}
The main idea beyond algebraic rewriting is to orient any algebraic equation, 
allowing to replace each occurence of the left-hand side by the right hand-side but not in the other way.
The objective is to combine the usual theory of abstract rewriting
with the algebraic axioms of the ambient structure to deduce algebraic properties of this latter. In this paper, we study presentations of higher-dimensional categories presented by generators and relations using rewriting systems introduced independently by Burroni under the name of polygraphs \cite{BUR93} or by Street under the name computads \cite{STR86,STR87}. We refer the reader to \cite{GM18} for more details on polygraphs and their rewriting properties. Rewriting with polygraphs has for instance been used to compute coherent presentations of higher-dimensional categories \cite{GM09}, to obtain homological and homotopical properties using Squier's theorems \cite{GM12,GM18}, to prove Koszulness of algebras \cite{GHM17} or to compute explicit linear bases of vector spaces or algebras \cite{GHM17} or higher-dimensional linear~$(2,2)$-categories \cite{AL16}.
The context of linear rewriting introduced by Guiraud, Hoffbeck and Malbos in \cite{GHM17} for associative algebras has been extended to the context of linear~$(2,2)$-categories by Alleaume \cite{AL16},
where those categories were presented by linear~$(3,2)$-polygraphs,
for which we recall the rewriting properties of termination and confluence that we will use throughout this paper in \ref{SS:LinearRewriting}. One of the main objectives of this paper is to extend Alleaume's results to linear rewriting modulo a part of the relations or axioms, in order to facilitate the analysis of confluence of linear~$(3,2)$-polygraphs, as explained in the sequel.

\subsubsection*{Linear rewriting modulo}
In this work, we use rewriting modulo theory in order to rewrite with a set of oriented rules $R$ modulo the set $E$ of axioms of the inherent 
algebraic structure. Rewriting modulo makes the confluence easier to prove for two reasons.
First of all, the family of critical branchings that should be considered in the analysis of confluence is reduced since one should not consider critical branchings implying a defining rule and an axiomatic rule. Besides, rewriting modulo non-oriented relations allows more flexibility in computations when reaching confluence. 
In particular, we use in this paper rewriting modulo the isotopy axioms of a pivotal linear~$(2,2)$-category, whose structure is recalled in Section \ref{SS:PivotalCategories}. These relations create obstructions to prove confluence, as they lead to a huge number of critical branchings. However, critical branchings implying a relation of the linear~$(2,2)$-category and an isotopy relation should not be taken into account. Following \cite[Section 3.4]{DMpp18}, these branchings does not need to be considered when rewriting with the suited rewriting modulo system. 

In the literature, three different paradigms of rewriting modulo are well-known.
The first one is to consider the rewriting system $\ERE$ consisting in rewriting with $R$ on congruence classes modulo $E$. However, the analysis of confluence of $\ERE$ appears inefficient in general, since the reducibility of an equivalence class needs to explore all the class.
Another approach is using Huet's definition of confluence modulo \cite{Huet80}, in which rewriting sequences involve only oriented rules and no equivalence steps, but sources and targets in confluence diagrams are not required to be equal but congruent modulo $E$. 
Jouannaud and Kirchner enlarged this approach in \cite{JouannaudKirchner84} by defining rewriting properties for any rewriting system modulo~$S$ such that $R\subseteq S \subseteq \ERE$.
In \cite{DMpp18}, a categorical and polygraphic model was introduced to rewrite modulo in higher-dimensional categories, following Huet and  Jouannaud and Kirchner's approaches. An abstract local confluence criteria  and a critical pair lemma were proved in this setting, both requiring a termination hypothesis for $\ERE$. In Section \ref{SS:LinearPolMod}, we extend these constructions to the linear context, in particular we introduce the notion of linear~$(3,2)$-polygraphs modulo. We precise in Section \ref{SS:ConfluenceModulo} the property of confluence modulo in this context, and give a classification of branchings modulo, which is bigger than in \cite{DMpp18} since some branchings come from the linear structure.

There are two main difficulties when rewriting in linear structures: first of all, we have to specify allowed rewritings in order to avoid non-termination due to the linear context, \cite{GHM17}. We consider in this paper a similar notion of rewriting step for the set $R$ of rewriting rules. The second difficulty is that proving local confluence from confluence of critical branchings require a termination assumption, see \cite[Section 4.2]{GHM17}. Indeed, some branchings that would be trivially confluent if all rewritings were allowed may become non-confluent because of the restriction on the set of rewritings. In Section \ref{SS:ConfluenceModulo}, we extend the critical pair lemma modulo of \cite{DMpp18} for linear~$(3,2)$-polygraphs modulo to get local confluence from confluence of critical branchings modulo, under the same termination assumption for $\ERE$.

\subsubsection*{Confluence modulo by decreasingness}
The termination of the polygraph modulo $\ERE$ is in general difficult to prove. In particular, this is the case when considering linear~$(3,2)$-polygraphs modulo presenting pivotal linear~$(2,2)$-categories, due to the existence of $2$-cells with source and target the same identity $1$-cell, called \emph{bubbles}. Indeed, Alleaume enlighted the fact that linear~$(2,2)$-categories with bubbles that can go through strands can in general not be enriched with a monomial order, so that they can not be presented by terminating rewriting systems. Moreover, the cyclicity of a $2$-cell with respect to the biadjunctions of the pivotal structure implies that the dot picturing this $2$-cell can be moved around the cap and cup $2$-cells, eventually creating rewriting cycles and making termination fail. However, even if $\ERE$ is not terminating, in many cases it will be quasi-terminating, that is all infinite rewriting sequences are generated by cycles. Following \cite{DM19}, the termination assumption for $\ERE$ can be weakened to a quasi-termination assumption, in order to prove confluence modulo of a linear~$(3,2)$-polygraph modulo $(R,E,S)$ from confluence of its critical branchings modulo. We introduce in Section \ref{SS:ConfluenceByDecreasingness} a notion of decreasingness modulo for a linear~$(3,2)$-polygraph modulo following Van Oostrom's abstract decreasingness property \cite{VOO94}. We then establish the following result:
\begin{quote}
\noindent{\bf Theorem \ref{T:ConfluenceByDecreasingness}}.
\emph{
Let $(R,E,S)$ be a left-monomial linear~$(3,2)$-polygraph modulo. If $(R,E,S)$ is decreasing modulo $E$, then $S$ is confluent modulo $E$.}
\end{quote}
	
The property of decreasingness modulo is defined by the existence of a well-founded labelling on the rewriting steps of a linear~$(3,2)$-polygraph modulo $(R,E,S)$, for which we require that all labels on the cells of $E$ are trivial, and such that labels are strictly decreasing on confluence modulo diagrams as defined in Section \ref{SS:ConfluenceByDecreasingness}. When $\ERE$ is quasi-terminating, there exists a particular labelling counting the distance between a $2$-cell and a fixed quasi-normal form, that is a $2$-cell from which we can only get rewriting cycles. Proposition \ref{P:DecreasingnessFromCriticalPairs}, proved in \cite{DM19}, shows that we can prove decreasingness by proving that all the critical branchings modulo $E$ are decreasing with respect to this quasi-normal form labelling.

\subsubsection*{Linear bases from confluence modulo}
The objective of this paper is to compute a hom-basis of a linear~$(2,2)$-category $\mathcal{C}$ presented by generators and relations, that is a family of sets $(\mathcal{B}_{p,q})$ indexed by pairs $(p,q)$ of $1$-cells such that $\mathcal{B}_{p,q}$ is a linear basis of the vector space $\mathcal{C}_2(p,q)$ of $2$-cells of $\mathcal{C}$ with $1$-source $p$ and $1$-target $q$. In \cite{AL16}, Alleaume proved following the constructions of \cite{GHM17} that such a basis may be obtained from a finite convergent presentation, considering all the monomials in normal form. In the context of rewriting modulo, there are two different degrees of normal forms. First of all, we require that the linear~$(3,2)$-polygraphs modulo $(R,E,S)$ is either normalizing or quasi-terminating so that one can either speak of normal form or quasi-normal form with respect to $S$. Then, one can also consider normal forms with respect to the polygraph $E$ for which we rewrite modulo, that we require to be convergent. We say that a normal form for $(R,E,S)$ is a $2$-cell appearing in the monomial decomposition of the $E$-normal form of a monomial in normal form with respect to $S$. In Section \ref{SS:LinearBasesModulo}, we give a method to compute a hom-basis of a linear~$(2,2)$-category from an assumption of confluence modulo some relations. Namely, let $\mathcal{C}$ be a pivotal linear~$(2,2)$-category, $P$ a linear~$(3,2)$-polygraph modulo presenting $\mathcal{C}$, and $(E,R)$ a convergent splitting of $P$, given by a couple $(E,R)$ of linear~$(3,2)$-polygraphs where $E$ is convergent and contains all the $3$-cells corresponding to the axioms of pivotality and $R$ contains the remaining relations. This data allows to define polygraphs modulo $(R,E,S)$, and we prove the following result:
\begin{quote}
\noindent{\bf Theorem \ref{T:BasisByConfluenceModulo}}.
\emph{
Let $P$ be a linear~$(3,2)$-polygraph presenting a linear~$(2,2)$-category $\C$, $(E,R)$ a convergent splitting of $P$ and $(R,E,S)$ a linear~$(3,2)$-polygraph modulo such that 
\begin{enumerate}[{\bf i)}]
\item $S$ is normalizing,
\item $S$ is confluent modulo $E$,
\end{enumerate}
then the set of normal forms for $(R,E,S)$ is a hom-basis of $\mathcal{C}$.}
\end{quote}

This result is then extented to the case where $S$ is quasi-terminating, where we define a quasi-normal form for the linear~$(3,2)$-polygraph modulo $(R,E,S)$ as a monomial appearing in the monomial decomposition of the $E$-normal form of a monomial in $\text{Supp}(\cl{u})$, where $\cl{u}$ is the fixed quasi-normal form of a $2$-cell $u$. 

\begin{quote}
\noindent{\bf Theorem \ref{T:BasisByConfluenceModulob}}.
\emph{
With the same assumptions than in Theorem \ref{T:BasisByConfluenceModulo}, if 
\begin{enumerate}[{\bf i)}]
\item $S$ is quasi-terminating,
\item $S$ is confluent modulo $E$,
\end{enumerate}
then the set of quasi-normal forms for $(R,E,S)$ is a hom-basis of $\mathcal{C}$.}
\end{quote}

\subsubsection*{The affine oriented Brauer category}
As an illustration, we apply Theorem \ref{T:BasisByConfluenceModulob} to compute a hom-basis for the affine oriented Brauer category, which is a monoidal category (seen as a $2$-category with only one object) obtained from the free symmetric monoidal category generated by a single $1$-cell and one additional dual, and adjoined with a polynomial generator subject to apropriate relations. This category is already well-understood, and a hom-basis has already been provided by Brundan, Comes, Nash and Reynolds \cite{BCNR14}. This linear~$(2,2)$-category has also already been studied by Alleaume using rewriting methods in \cite{AL16}. In this paper, we improve Alleaume's approach using rewriting modulo the isotopy axioms of the pivotal structure, as there are less critical branchings to consider in the proof of confluence. With this method, we recover Brundan, Comes, Nash and Reynolds' basis, that was also obtained by Alleaume in the non-modulo setting. More precisely, we define a linear~$(3,2)$-polygraph $\cl{\aob}$ presenting the linear~$(2,2)$-category $\aob$, for which we rewrite modulo its convergent subpolygraph of isotopy relations. We prove the following result:
\begin{quote}
\noindent{\bf Theorem \ref{T:QuasiTermConfModAOB}}.
\emph{ Let $(R,E)$ be the convergent splitting of $\aob$ defined in Section \ref{SSS:LinearPolAOB}, then $\ER$ is quasi-terminating and $\ER$ is confluent modulo $E$.}
\end{quote}
We prove that $\ER$ is quasi-terminating in Section \ref{SS:Quasi-TermAOB}, and confluent modulo $E$ in Section \ref{SS:ConfluenceModuloAOB}. As a consequence, we obtain a hom-basis for $\aob$, which coincides for an apropriate choice of quasi-normal forms for $\ER$ with the basis of normally ordered dotted orienter Brauer diagrams with bubbles of \cite{BCNR14}.

\subsubsection*{Organization of the paper}
In the first section of this paper, we recall from \cite{AL16} notions about linear~$(2,2)$-categories and linear~$(3,2)$-polygraphs. We recall the definition of a pivotal category and the isotopies of string diagrams coming from this structure. We define a linear~$(3,2)$-polygraph of isotopies that we use throughout this paper as a prototypical example to rewrite modulo. In Subsection \ref{SS:TerminationDerivation}, we recall from \cite{GUI06,GM09} a method to prove termination for a $3$-dimensional polygraph using a notion of derivation on a $2$-category. This method will be used in all termination and quasi-termination proofs in this paper. In Section \ref{S:LinearRewritingModulo}, we extend the previous rewriting properties and results in the context of rewriting modulo. We introduce a notion of linear~$(3,2)$-polygraph modulo and of rewriting steps taking into account the restriction of rewritings to positive $3$-cells. We introduce following \cite{Huet80,JouannaudKirchner84,DMpp18} a notion of confluence modulo for these polygraphs, and we extend the critical pair lemma from \cite{DMpp18} in the linear setting. We introduce a notion of decreasingness modulo from Van Oostrom's abstract decreasingness property \cite{VOO94}, and we prove that decreasingness modulo implies confluence modulo. In the last part of this section \ref{SS:LinearBasesModulo}, we describe a method to compute a hom-basis of a linear~$(2,2)$-category from terminating or quasi-terminating linear~$(3,2)$-polygraphs modulo satisfying an assumption of confluence modulo. In the last section of this paper, Section \ref{S:AOB}, we use this procedure to compute a hom-basis for the linear~$(2,2)$-category $\aob$. We explicit a presentation of this category, and fix a linear~$(3,2)$-polygraph presenting it. We prove that the linear~$(3,2)$-polygraph $\ER$ obtained by considering in $E$ the convergent subpolygraph of isotopies and in $R$ the remaining relations is quasi-terminating, and that $\ER$ is confluent modulo $E$ by proving that its critical branchings are decreasing with respect to the quasi-normal form labelling and Proposition \ref{P:DecreasingnessFromCriticalPairs}.

\section{Preliminaries}
\label{S:Preliminaries}
In this section, we fix a field $\mathbb{K}$. If $\mathcal{C}$ is an $n$-category, we denote by $\mathcal{C}_n$ the set of $n$-cells in $\mathcal{C}$. For any $0 \leq k < n$ and any $k$-cells $p$ and $q$ in $\mathcal{C}$, we denote by $\mathcal{C}_{k+1}(p,q)$ the set of $(k+1)$-cells in $\C$ with $k$-souce $p$ and $k$-target $q$. If $p$ is a $k$-cell of $\mathcal{C}$, we denote respectively by $s_i(p)$ and $t_i(p)$ the $i$-source and $i$-target of $p$ for $0 \leq i \leq k-1$. The source and target maps satisfy the globular relations
\[ s_i \circ s_{i+1} = s_i \circ t_{i+1} \quad \text{and} \quad t_i \circ s_{i+1} = t_i \circ t_{i+1} \] 
for any $0 \leq i \leq n-2$. Two $k$-cells $p$ and $q$ are \emph{$i$-composable} when $t_i(p) = s_i(q)$. In that case, their $i$-composition is denoted by $p \star_i q$. The compositions of $\mathcal{C}$ satisfy the \emph{exchange relations}:
\[ (p_1 \star_i q_1) \star_j (p_2 \star_i q_2) = (p_1 \star_j p_2) \star_i (q_1 \star_j q_2) \]
for any $i < j$ and for all cells $p_1$,$p_2$,$q_1$,$q_2$ such that both sides are defined. If $p$ is a $k$-cell of $\mathcal{C}$, we denote by $1_{p}$ its identity $(k+1)$-cell.
A $k$-cell $p$ of $\mathcal{C}$ is \emph{invertible with respect to $\star_i$-composition} ($i$-invertible for short) when there exists a (necessarily unique) $k$-cell $q$ in $\mathcal{C}$ with $i$-source $t_i(p)$ and $i$-target $s_i(p)$ such that
\[ p \star_i q = 1_{s_i(p)} \quad \text{and} \quad q \star_i p = 1_{t_i(p)} \]

\noindent Throughout this paper, $2$-cells in $2$-categories are represented using the classical representation by string diagrams, see \cite{LAU12,SAV18} for surveys on the correspondance between $2$-cells and diagrams. The $\star_0$ composition of $2$-cells is depicted by placing two diagrams next to each other, the $\star_1$-composition is vertical concatenation of diagrams. In the sequel, the $0$-cells will be denoted by $x,y, \dots$, the $1$-cells will be denoted by $p,q, \dots$, the $2$-cells will be denoted by $u,v,\dots$ and the $3$-cell will be denoted either by $\alpha$,$\beta$,$\dots$ when rewriting in a non modulo setting and $f$,$g$, $\dots$ when rewriting modulo.

\subsection{Rewriting in linear~$(2,2)$-categories} 
\label{SS:LinearRewriting}
Recall from \cite{AL16} that a \emph{linear~$(n,p)$-category (over $\mathbb{K}$)} is an $n$-category such that for any $p \leq i \leq n$, the set $\mathcal{C}_i(p,q)$ is a $\mathbb{K}$-vector space for any $(i-1)$-cells $p$ and $q$ in $\mathcal{C}$. A linear~$(n,p)$-category is presented by a \emph{linear~$(n+1,p)$-polygraph}. 
In this section, we expand on the notions of linear~$(2,2)$-categories and linear~$(3,2)$-polygraphs over $\mathbb{K}$ and their rewriting properties.

\subsubsection{Linear $(2,2)$-categories}  A linear~$(2,2)$-category is a $2$-category $\C$ such that:
\begin{enumerate}[{\bf i)}]
\item for any $p$ and $q$ in $\C_1$, the set $\C_2(p,q)$ is a $\K$-vector space.  
\item for any $p,q,r$ in $\C_1$, the map $\star_1: \C_2(p,q) \otimes \C_2(q,r) \to \C_2(p,r)$ is $\K$-linear.
\item sources and target maps are compatible with the linear structure. 
\end{enumerate}

When $\mathcal{C}$ admits a presentation by generators and relations, a $2$-cell $\phi$ obtained from $\star_0$ and $\star_1$ compositions of generating $2$-cells is called a \emph{monomial} in $\C$. Any $2$-cell $\phi$ in $\C$ can be uniquely decomposed into a sum of monomials $\phi= \sum \phi_i$, which is called the \emph{monomial decomposition} of $\phi$. We also set the \emph{support} of $\phi$ to be the set $\{  \phi_i \rbrace$ of $2$-cells that appear in this monomial decomposition. It will be denoted by $\text{Supp}(\phi)$. Given a linear~$(2,2)$-category $\mathcal{C}$, a \emph{hom-basis} of $\mathcal{C}$ is a
family of sets $(\mathcal{B}_{p,q})_{p,q \in \mathcal{C}_1}$ indexed by pairs $(p,q)$ of $1$-cells of $\mathcal{C}$ such that for any $1$-cells $p$ and $q$, the set $\mathcal{B}_{p,q}$ is a linear basis of $\mathcal{C}_2(p,q)$.

\subsubsection{Contexts}
Following \cite{GM09}, a \emph{context} of a $2$-category $\mathcal{C}$ is a pair $(x,C)$, where $x= (f,g)$ is a $1$-sphere of $\mathcal{C}$ and $C$ is a $2$-cell in the $2$-category $\mathcal{C}[x]$ which is defined as the $2$-category with an additional $2$-cell filling the $1$-sphere $x$, such that $C$ contains exactly one occurence of $x$. Seeing the $1$-sphere $x$ as a formal $2$-cell denoted by $\square$, such a context in $\mathcal{C}$ has the shape
 \[ C = m_1 \star_1 ( m_2 \star_0 \square \star_0 m_3)\star_1 m_4, \] where the $m_i$ are $2$-cells in $\mathcal{C}$. For any $2$-cell $u$ in $\mathcal{C}_2$, the above $2$-cell where $\square$ is replaced by $u$ will be denoted by $C[u]$. 

A \emph{context} of a linear~$(2,2)$-category $\mathcal{C}$ has the shape 
\[  C = \lambda m_1 \star_1 ( m_2 \star_0 \square \star_0 m_3)\star_1 m_4 + u, \]
where the $m_i$ are monomials in $\mathcal{C}$, $\lambda$ is a scalar in $\mathbb{K}$ and $u$ is a $2$-cell in $\mathcal{C}$. The \emph{category of contexts of $\mathcal{C}$} is the category denoted by $\Ccontext$ whose objects are the $2$-cells of $\mathcal{C}$ and the morphisms from $f$ to $g$ are the contexts $(\partial (f), C)$ of $\mathcal{C}$ such that $C[f] = g$.

\subsubsection{Linear $(3,2)$-polygraphs}
We define the notion of \textit{linear $(3,2)$-polygraph} inductively:
\begin{enumerate}[{\bf i)}]
\item A \emph{$1$-polygraph} $P$ is a graph (with possible loops and multi-edges) with a set of vertices $P_0$ and a set of edges $P_1$ equipped with two applications $s_0^P$, $t_0^P$: $P_1 \to P_0$ (source and target). We denote by $P_1^{\ast}$ the free $1$-category generated by $P$.
\item A \emph{$2$-polygraph} is a triple $( P_0,  P_1 , P_2)$ where:
\begin{enumerate}[{\bf a)}]
\item $(P_0, P_1)$ is a $1$-polygraph,
\item $P_2$ is a globular extension of $P_1^\ast$, that is a set $P_2$ of $2$-cells equipped with two maps \[ s_1^P, \: t_1^P: P_2 \to P_1^* \] called respectively $1$-source and $1$-target such that the globular relations  $s_0^P \circ s_1^P = s_0^P \circ t_1^P $ and $ t_0^P \circ s_1^P = t_0^P \circ t_1^P $ hold.
\end{enumerate}
\item We construct $P_2^\ell$ the free linear~$(2,2)$-category generated by $P = ( P_0, P_1, P_2 )$ as the $2$-category whose $0$-cells are $P_0$, whose $1$-cells are elements of $P_1^\ast$ and for any $p,q$ in $P_1^\ast$, $P_2^\ell(p,q)$ is the free vector space on $P_2^*(p,q)$, where $P_2^\ast$ if the free $2$-category generated by $(P_0 , P_1 , P_2)$.
\item A \emph{linear~$(3,2)$-polygraph} is the data of $P = ( P_0, P_1, P_2, P_3 )$ where $( P_0, P_1, P_2)$ is a $2$-polygraph and $P_3$ is a globular extension of $P_2^\ell$, that is a set equipped with two maps 
\[ s_2^P, \: t_2^P: P_3 \to P_2^\ell \] respectively called $2$-source and $2$-target such that the globular relations hold: $s_0^P \circ s_1^P = s_0^P \circ t_1^P $, $ t_0^P \circ s_1^P = t_0^P \circ t_1^P $, $s_1^P \circ s_2^P = s_1^P \circ t_2^P$ and $t_1^P \circ s_2^P = t_1^P \circ t_2^P$.
Source and target maps are compatible with the linear structure. In particular, $s_2^P$ and $t_2^P$ are $\mathbb{K}$-linear.
\end{enumerate} 

Note that, when there is no ambiguity, the source and target maps will be simply denoted by $s_i$, $t_i$ for $0 \leq i \leq 2$. 
Let $P$ be a linear~$(3,2)$-polygraph, we denote by $P_{\leq k}$ the underlying $k$-polygraph of $P$, for $k = 1,2$. We denote by $P^\ell$ the free linear~$(3,2)$-category generated by $P$, as defined in \cite[Section 3.1]{ALPhD}. Recall from \cite[Proposition 1.2.3]{GHM17} that every $3$-cell $\alpha$ in $P^\ell$ is $2$-invertible, its inverse being given by $1_{s_2(\alpha)} - \alpha + 1_{t_2(\alpha)}$. The \textit{congruence} generated by $P$ is the equivalence relation $\equiv$ on $\Sl$ defined by 
\[ \text{$ u \equiv v$ if there is a $3$-cell $\alpha$ in $P^\ell$ such that $s_2(\alpha) = u$ and $t_2(\alpha)=v$}, \]
 We say that a linear~$(2,2)$-category $\mathcal{C}$ is presented by $P$ 
if $\mathcal{C}$ is isomorphic to the quotient category $\Sl \slash \equiv$. A $3$-cell in $P^\ell$ is said \emph{elementary} if it is of the form $\lambda m_1 \star_1 ( m_2 \star_0 \alpha \star_0 m_3)\star_1 m_4 + h$ where $\lambda$ is a non zero scalar, $\alpha$ is a generating $3$-cell in $P_3$, the $m_i$ are monomials of $P_2^\ell$ and $h$ is a $2$-cell in $P_2^\ell$. For a cellular extension $\Gamma$ of $P_1^\ast$, we will denote by $||f||_\Gamma$ the number of occurences of $2$-cells of $\Gamma$ in the $2$-cell $f$ in $P_2^\ast$. 

\subsubsection{Rewriting steps}
A \textit{rewriting step} of a linear~$(3,2)$-polygraph $P$ is a $3$-cell of the following form:
\[ C[\alpha]: \; C[s_2(\alpha)] \fl C[t_2(\alpha)] \]
where $\alpha$ is a generating $3$-cell in $P_3$, $C = \lambda m_1 \star_1 ( m_2 \star_0 \square \star_0 m_3)\star_1 m_4 + h$ is a context of the free linear~$(2,2)$-category $P_2^\ell$ generated by $P$ such that the monomial $m_1 \star_1 ( m_2 \star_0 s_2(\alpha) \star_0 m_3)\star_1 m_4$ does not appear in the monomial decomposition of $h$. Given a linear~$(3,2)$-polygraph $P$, we denote by $P_{\text{stp}}$ the set of rewriting steps of $P$.
A \textit{rewriting sequence} of $P$ is a finite or infinite sequence of rewriting steps of $P$. We say that a $2$-cell is a \textit{normal form} if it can not be reduced by any rewriting step. \\

A $3$-cell $\alpha$ of $P^\ell$ is called \emph{positive} if it is a $\star_2$-composition $\alpha= \alpha_1 \star_2 \dots \star_2 \alpha_n$ of rewriting steps of $P$. The \emph{length} of a positive $3$-cell $\alpha$ in $P^\ell$ is the number of rewriting steps of $P$ needed to write $\alpha$ as a $\star_2$-composition of these rewriting steps. We denote by $P^{\ell (1)}$ the set of positive $3$-cells of $P$ of length $1$. A proof similar to \cite[Lemma 3.1.3]{GHM17} yields the following result, that we will use in the proof of the critical pair lemma modulo in the sequel:

\begin{lemma} \label{L:CellDecomposition}
Let $\alpha$ be an elementary $3$-cell in $P^{\ell}$, then there exist two rewriting sequences $\beta$ and $\gamma$ of $P$ of length at most $1$ such that $\alpha = \beta \star_2 \gamma^-$.
\end{lemma}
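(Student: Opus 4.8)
The plan is to unwind the definitions of elementary $3$-cell and rewriting step, and to exhibit the decomposition $\alpha = \beta \star_2 \gamma^-$ directly. Recall that an elementary $3$-cell has the shape
\[ \alpha = \lambda\, m_1 \star_1 ( m_2 \star_0 \alpha_0 \star_0 m_3)\star_1 m_4 + h, \]
where $\lambda \neq 0$, $\alpha_0$ is a generating $3$-cell in $P_3$, the $m_i$ are monomials, and $h$ is a $2$-cell in $P_2^\ell$. The key point is that, unlike a genuine rewriting step, there is \emph{no} restriction forbidding the reducible monomial $m := m_1 \star_1 ( m_2 \star_0 s_2(\alpha_0) \star_0 m_3)\star_1 m_4$ from appearing in the support of $h$. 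So the first thing I would do is split into cases according to whether or not $m \in \text{Supp}(h)$, following the strategy of \cite[Lemma 3.1.3]{GHM17}.

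In the case where $m \notin \text{Supp}(h)$, the context $C = \lambda m_1 \star_1 ( m_2 \star_0 \square \star_0 m_3)\star_1 m_4 + h$ is already an admissible context for a rewriting step, so $\alpha = C[\alpha_0]$ is itself a rewriting step, i.e. a positive $3$-cell of length $1$. Then I would simply take $\beta = \alpha$ and $\gamma = 1_{t_2(\alpha)}$ the identity (a rewriting sequence of length $0 \leq 1$), so that $\gamma^- = 1_{t_2(\alpha)}$ and $\alpha = \beta \star_2 \gamma^-$ trivially. The case where $\lambda m_1 \star_1 (\dots) \star_1 m_4$ with the target rather than the source already lies in $h$ would be handled symmetrically, producing the decomposition with $\beta$ trivial and $\gamma$ a length-$1$ step in the reverse direction, using $2$-invertibility of $3$-cells recalled from \cite[Proposition 1.2.3]{GHM17}.

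The substantive case is when $m \in \text{Supp}(h)$, say $h = \mu\, m + h'$ with $\mu \in \mathbb{K}$ and $m \notin \text{Supp}(h')$. Here I would rewrite the source and target explicitly: $s_2(\alpha) = \lambda\, m + \mu\, m + h' = (\lambda+\mu) m + h'$ and $t_2(\alpha) = \lambda\, m' + \mu\, m + h'$ where $m' = m_1 \star_1 ( m_2 \star_0 t_2(\alpha_0) \star_0 m_3)\star_1 m_4$. The idea is to introduce the two \emph{admissible} contexts $C_1 = (\lambda+\mu) m_1 \star_1 (m_2 \star_0 \square \star_0 m_3) \star_1 m_4 + h'$ and $C_2 = -\mu\, m_1 \star_1 (m_2 \star_0 \square \star_0 m_3) \star_1 m_4 + (\lambda\, m' + h')$, in each of which the reducible monomial does not occur in the additive remainder, so that $\beta = C_1[\alpha_0]$ and $\gamma = C_2[\alpha_0]$ are genuine length-$1$ rewriting steps; a short check of sources and targets then gives $\alpha = \beta \star_2 \gamma^-$. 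I expect the main obstacle to be the bookkeeping in this last case: one must verify carefully that the scalar coefficients are arranged so that the remainders $h'$, $\lambda m' + h'$ genuinely avoid the reducible monomial (so that $\beta,\gamma$ really are rewriting steps and not merely elementary cells), and that the composite $\beta \star_2 \gamma^-$ telescopes to $\alpha$ rather than to some cell differing by a sign or a spurious term. Modulo this verification, which parallels the associative-algebra argument of \cite[Lemma 3.1.3]{GHM17} transported to the linear~$(2,2)$-category setting, the statement follows.
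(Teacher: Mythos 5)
Your strategy is the right one---it is precisely the argument of \cite[Lemma 3.1.3]{GHM17} that the paper itself invokes in place of a proof---but the two verifications you explicitly defer are exactly the points where your write-up breaks. First, the sign in $C_2$ is wrong. Writing $m = m_1\star_1(m_2\star_0 s_2(\alpha_0)\star_0 m_3)\star_1 m_4$ and $m' = m_1\star_1(m_2\star_0 t_2(\alpha_0)\star_0 m_3)\star_1 m_4$, your $\gamma = C_2[\alpha_0]$ has $2$-source $-\mu m + \lambda m' + h'$ and $2$-target $(\lambda-\mu)m' + h'$, whereas one needs $s_2(\gamma) = t_2(\alpha) = \mu m + \lambda m' + h'$ and $t_2(\gamma) = t_2(\beta) = (\lambda+\mu)m' + h'$; so $\beta\star_2\gamma^-$ is in general not even composable, and never has target $t_2(\alpha)$ (unless $\mathbb{K}$ has characteristic $2$). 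The correct choice is $C_2 = \mu\, m_1\star_1(m_2\star_0\square\star_0 m_3)\star_1 m_4 + (\lambda m' + h')$, after which the telescoping is a short computation from the identities $\gamma^- = 1_{s_2(\gamma)} - \gamma + 1_{t_2(\gamma)}$ (recalled in the paper) and $\beta\star_2\gamma^- = \beta + \gamma^- - 1_{t_2(\beta)}$, which give $\beta\star_2\gamma^- = \lambda\, m_1\star_1(m_2\star_0\alpha_0\star_0 m_3)\star_1 m_4 + \mu 1_m + 1_{h'} = \alpha$. You should also treat the subcase $\lambda+\mu = 0$, where $\beta$ degenerates to the identity of $h'$ (this is why the lemma says ``length at most $1$'' rather than ``length $1$''); and your ``symmetric case where the target lies in $h$'' is not a case at all, since the definition of rewriting step only constrains occurrences of the \emph{source} monomial.

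Second, and more substantively, your claim that the remainder $\lambda m' + h'$ of $C_2$ avoids the reducible monomial cannot be verified from the stated hypotheses: since $m\notin\mathrm{Supp}(h')$ by construction, it amounts to $m\notin\mathrm{Supp}(\lambda m')$, i.e.\ essentially to $s_2(\alpha_0)\notin\mathrm{Supp}(t_2(\alpha_0))$. The paper's definition of left-monomial does not impose this, and without it the lemma is actually false: for a single generating $2$-cell $x$ and the rule $\alpha_0 : x \Rrightarrow 2x$, the elementary $3$-cell $\alpha_0 + 1_x : 2x \Rrightarrow 3x$ admits no decomposition $\beta\star_2\gamma^-$ with $\beta,\gamma$ of length at most $1$, because the only rewriting steps with sources $2x$ and $3x$ are $2\alpha_0 : 2x \Rrightarrow 4x$ and $3\alpha_0 : 3x \Rrightarrow 6x$, whose targets never agree. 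So the argument---yours, and the one the paper imports---requires the standard normalization that no generating rule's source occurs in the support of its target, which is built into the associative-algebra setting of \cite{GHM17}; this hypothesis must be stated and used at exactly the step you left as ``verification''.
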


\subsubsection{Rewriting properties}
A \emph{branching} (resp. \emph{local branching}) of a linear~$(3,2)$-polygraph $P$ is a pair of rewriting sequences (resp. rewriting steps) of $P$ which have the same $2$-cell as $2$-source.
Such a branching is \emph{confluent} if it can be completed by rewriting sequences $f'$ and $g'$ of $P$ as follows:
\[ \xymatrix @C=2.6em@R=1.2em{
& v
	\ar @/^1.5ex/ [dr] ^-{f'}
\\
u 
	\ar @/^1.5ex/ [ur] ^-{f}
	\ar @/_1.5ex/ [dr] _-{g}
&& u'
\\
& w
	\ar @/_1.5ex/ [ur] _-{g'}
}	\]

\noindent A linear~$(3,2)$-polygraph $P$ is said:
\begin{enumerate}[{\bf i)}]
\item \emph{terminating} if there is no infinite rewriting sequences in $P$.
\item \emph{confluent} if all the branchings of $P$ are confluent.
\item \emph{convergent} if it is both terminating and confluent.
\item \emph{left (resp. right) monomial}  if every source (resp. target) of a $3$-cell is a monomial. In the sequel, we will only consider left-monomial linear~$(3,2)$-polygraphs.
\end{enumerate}

The local branchings of linear~$(3,2)$-polygraphs can be classified in four different forms, see \cite[Section 4.2]{AL16}. An \emph{aspherical branching} of $P$ is a branching of the form $$ t(\alpha) \leftarrow s(\alpha) \fl t(\alpha).$$
A \emph{Peiffer branching} is a branching formed with two rules which does not overlap: 
$$ t_2(\alpha \star_1 s_2(\beta) + h \leftarrow s_2( \alpha ) \star_1 s_2(\beta) + h \fl s_2(\alpha) \star_1 t_2(\beta) + h. $$
An \textit{additive branching} is a branching of the form
$$ t_2(\alpha) + s_2(\beta)  \leftarrow  s_2(\alpha) + s_2(\beta) \fl s_2(\alpha) + t_2(\beta). $$

\emph{Overlapping} branchings are all the remaining local branchings. We define an order on monomials of $P_2^\ell$ by $f \sqsubseteq g$ if there exists a context $C$ of $P_2^\ast$ such that $g=C[f]$. A \textit{critical branching} of $P$ is an overlapping branching of $P$ which is minimal for $\sqsubseteq$. In \cite[Section 5.1]{GM09}, Guiraud and Malbos classified all the different forms of critical branchings on such diagrams, in a non linear case. In this paper, we only study left-monomial linear~ $(3,2)$-polygraphs so that the classification of overlapping branchings is the same. 
There are \emph{regular} critical branchings corresponding to the overlapping of two reductions on a common part of a diagram, \emph{inclusion} critical branchings corresponding to the inclusion of a source of a rule in the source of another rule, and \emph{right, left, multi-indexed} critical branchings obtained by plugging an aditionnal $2$-cell in a diagram where $2$ reductions already overlap, see \cite{GM09} for more details. Recall from \cite{LAF03,GM09} that it suffices to check the confluence of indexed branchings for the instances of $2$-cells in normal form with respect to $P$.

There exists an adaptation of the linear critical pair lemma of \cite{GHM17} to linear~$(3,2)$-polygraphs, allowing to prove confluence from confluence of the critical branchings: namely if $P$ is a left-monomial and terminating linear~$(3,2)$-polygraph, it is confluent if and only if its critical branchings are confluent, \cite[Theorem 4.2.13]{AL16}. 
Given a convergent presentation of a linear~$(2,2)$-category $\mathcal{C}$ by a linear~$(3,2)$-polygraph $P$, \cite[Proposition 4.2.15]{AL16} states that for any $1$-cells $p$ and $q$ in $\mathcal{C}_1$, the set of monomial $2$-cells with source $p$ and target $q$ in normal form with respect to $P$  form a linear basis of the space $\mathcal{C}_2(p,q)$ . In the sequel, we will extend this basis result to a context of rewriting modulo.

\subsection{Pivotal linear~$(2,2)$-categories}
\label{SS:PivotalCategories}
In this subsection, we recall the structure of pivotal linear~$(2,2)$-category, which is a linear~$(2,2)$-category in which all the diagrammatic $2$-cells are drawn up to isotopy. 

\subsubsection{Adjunctions in a $2$-categories}
\label{SSS:AdjunctionsIn2Cat}
Let $\mathcal{C}$ be a $2$-category with sets of $0$-cells, $1$-cells and $2$-cells respectively denoted by
$\mathcal{C}_0$, $\mathcal{C}_1$ and $\mathcal{C}_2$. For any $1$-cell $p$ in $\mathcal{C}_1$,  a \emph{right adjoint} of $p$ is a $1$-cell $\hat{p}: y \to x$ equipped with two $2$-cells  $\varepsilon$ and $\eta$ in $\mathcal{C}$ defined as follows:
\begin{equation*} 
 \vcenter{\xy
 (0,0)*+{\scs y}="4";
 (-12,0)*+{\scs x}="m";
 (-24,0)*+{\scs y}="6";
 {\ar^{p} "4";"m"}; {\ar^{\hat{p}} "m";"6"};
 {\ar@/_1.95pc/_{1_y} "4";"6"};
 {\ar@{=>}_<<{\scriptstyle \varepsilon} (-12,2)*{};(-12,7)*{}};
\endxy}
\qquad \qquad
 \vcenter{\xy
 (12,0)*+{\scs x}="r";
 (0,0)*+{\scs y}="4";
 (-12,0)*+{\scs x}="m";
 {\ar^{\hat{p}} "4";"m"};
 {\ar@/^1.95pc/^{1_x} "r";"m"};
 {\ar@{=>}_<<{\scriptstyle \eta}(0,-7)*{};(0,-2)*{}};
 {\ar^{p} "r";"4"};
\endxy},
\end{equation*}
called the \emph{counit} and \emph{unit} of the adjunction, such that the equalities
\begin{equation*} \nn
 \scalebox{0.9}{\xy
 (12,0)*+{\scs x}="r";
 (0,0)*+{\scs y}="4";
 (-12,0)*+{\scs x}="m";
 (-24,0)*+{\scs y}="6";
 {\ar^{\hat{p}} "4";"m"}; {\ar^{p} "m";"6"};  {\ar^{p} "r";"4"};
 {\ar@/^1.95pc/^{1_x} "r";"m"};
 {\ar@/_1.95pc/_{1_y} "4";"6"};
 {\ar@{=>}_<<{\scriptstyle \eta}(0,-7)*{};(0,-2)*{}};
 {\ar@{=>}_<<{\scriptstyle \varepsilon} (-12,2)*{};(-12,7)*{}};
\endxy}
\quad = \quad
\scalebox{0.9}{\xy (8,0)*+{\scs x}="4"; (-8,0)*+{\scs y}="6"; {\ar@/^1.65pc/^{p}
"4";"6"}; {\ar@/_1.65pc/_{p} "4";"6"}; {\ar@{=>}_<<<{ 1_{p}} (-.5,-3)*{};(-.5,3)*{}} ;
\endxy}
\qquad \text{and} \qquad 
 \scalebox{0.9}{\xy
 (12,0)*+{\scs y}="r";
 (0,0)*+{\scs x}="4";
 (-12,0)*+{\scs y}="m";
 (-24,0)*+{\scs x}="6";
 {\ar^{p} "4";"m"}; {\ar^{\hat{p}} "m";"6"};  {\ar^{\hat{p}} "r";"4"};
 {\ar@/^1.95pc/^{1_y} "4";"6"};
 {\ar@/_1.95pc/_{1_x} "r";"m"};
 {\ar@{=>}_<<{\scriptstyle \eta}(-12,-7)*{};(-12,-2)*{}};
 {\ar@{=>}_<<{\scriptstyle \varepsilon} (0,2)*{};(0,7)*{}};
\endxy}
 \quad = \quad
\scalebox{0.9}{\xy (8,0)*+{\scs y}="4"; (-8,0)*+{\scs x}="6";
 {\ar@/^1.65pc/^{\hat{p}} "4";"6"};
 {\ar@/_1.65pc/_{\hat{p}} "4";"6"};
 {\ar@{=>}_<<<{ 1_{\hat{p}}} (-.5,-3)*{};(-.5,3)*{}} ; \endxy}
\end{equation*}
hold. We denote the fact that $p$ is a left adjoint of $\hat{p}$ by $p \dashv \hat{p}$. In a string diagrammatic notation, these units and counits are represented by caps and cups as follows:
\[ 
\eta: \:
\cupdb{p}{\hat{p}}{y}{x} , \qquad \varepsilon:  \:\capdb{\hat{p}}{p}{y}{x} \]
 The axioms of an adjunction require that the following equalities hold:
\begin{equation} 
\label{adjunctionref}
\begin{tikzpicture}[baseline = 0,scale=1.311]
  \draw[-,black,thick] (0.3,0) to (0.3,.4);
	\draw[-,black,thick] (0.3,0) to[out=-90, in=0] (0.1,-0.4);
	\draw[-,black,thick] (0.1,-0.4) to[out = 180, in = -90] (-0.1,0);
	\draw[-,black,thick] (-0.1,0) to[out=90, in=0] (-0.3,0.4);
	\draw[-,black,thick] (-0.3,0.4) to[out = 180, in =90] (-0.5,0);
  \draw[-,black,thick] (-0.5,0) to (-0.5,-.4);
  \node at (-0.3,0.65) {$x$};
  \node at (0.1,-0.65) {$y$};
  \node at (-0.5,-0.6) {$p$};
\end{tikzpicture}  = \; \begin{tikzpicture}[baseline=0, scale=1.311]
\draw[-,black,thick] (0,-0.4) to (0,0.4) ;
\node at (0,-0.65) {$p$} ;
\node at (0.3,0) {$y$};
\node at (-0.3,0) {$x$};
\end{tikzpicture} ,
\qquad 
\begin{tikzpicture}[baseline = 0, scale=1.311]
  \draw[-,black, thick] (0.3,0) to (0.3,-.4);
	\draw[-,black, thick] (0.3,0) to[out=90, in=0] (0.1,0.4);
	\draw[-,black, thick] (0.1,0.4) to[out = 180, in = 90] (-0.1,0);
	\draw[-,black, thick] (-0.1,0) to[out=-90, in=0] (-0.3,-0.4);
	\draw[-,black, thick] (-0.3,-0.4) to[out = 180, in =-90] (-0.5,0);
  \draw[-,thick,black, thick] (-0.5,0) to (-0.5,.4);
   \node at (-0.3,-0.65) {$y$};
  \node at (0.1,0.65) {$x$};
  \node at (0.4,-0.65) {$\hat{p}$};
\end{tikzpicture}   = \;  \begin{tikzpicture}[baseline=0, scale=1.311]
\draw[-,thick,black] (0,-0.4) to (0,0.4) ;
\node at (0,-0.65) {$\hat{p}$} ;
\node at (0.3,0) {$x$};
\node at (-0.3,0) {$y$};
\end{tikzpicture} 
\end{equation}
are satisfied. When we are in the situation where $\hat{p}$ is also a left-adjoint of $p$, that is $p$ and $\hat{p}$ are biadjoint, that we denote by $p \dashv \hat{p} \dashv p$, there are other unit and counit $2$-cells $\eta '$ and $\varepsilon '$ for this adjunction and the symmetric zig-zag relations for these $2$-cells hold similarly.

\subsubsection{Cyclic $2$-cells and pivotality}
\label{SSS:Pivotality}
Given a pair of $1$-cells $p,q\maps x \to y$ with chosen biadjoints $(\hat{p},
\eta_p, \hat{\eta}_p, \varepsilon_p, \hat{\varepsilon}_p)$ and $(\hat{q}, \eta_q,
\hat{\eta}_q, \varepsilon_q, \hat{\varepsilon}_q)$, then any 2-cell $\alpha
\maps p \To q$ has two \emph{duals} $ ^*\alpha , \alpha^* \maps \hat{q}\To
\hat{p}$, one constructed using the left adjoint structure, the other
using the right adjoint structure. Diagrammatically the two duals are given by
\[
 ^* \alpha := \begin{tikzpicture}[baseline = 0,scale=1.311]
  \draw[-,black, thick] (0.3,0) to (0.3,.4);
	\draw[-,black, thick] (0.3,0) to[out=-90, in=0] (0.1,-0.4);
	\draw[-,black, thick] (0.1,-0.4) to[out = 180, in = -90] (-0.1,0);
	\draw[-,black, thick] (-0.1,0) to[out=90, in=0] (-0.3,0.4);
	\draw[-,black, thick] (-0.3,0.4) to[out = 180, in =90] (-0.5,0);
  \draw[-,black, thick] (-0.5,0) to (-0.5,-.4);
  \node at (-0.25,0.65) {$\varepsilon_q$};
  \node at (0.13,-0.65) {$\eta_p$};
  \node at (-0.1,0) {$\bullet$};
  \node at (0.05,0) {$\scriptstyle{\alpha}$};
  \node at (0.35,0.65) {$\hat{p}$};
  \node at (-0.5,-0.65) {$\hat{q}$};
\end{tikzpicture}
 \qquad \qquad \qquad
 \alpha ^* := \begin{tikzpicture}[baseline = 0, scale=1.311]
  \draw[-,black, thick] (0.3,0) to (0.3,-.4);
	\draw[-,black, thick] (0.3,0) to[out=90, in=0] (0.1,0.4);
	\draw[-,black, thick] (0.1,0.4) to[out = 180, in = 90] (-0.1,0);
	\draw[-,black, thick] (-0.1,0) to[out=-90, in=0] (-0.3,-0.4);
	\draw[-,black, thick] (-0.3,-0.4) to[out = 180, in =-90] (-0.5,0);
  \draw[-,thick,black, thick] (-0.5,0) to (-0.5,.4);
   \node at (-0.1,0) {$\bullet$};
  \node at (-0.25,0) {$\scriptstyle{\alpha}$};
  \node at (-0.5,0.65) {$\hat{p}$};
   \node at (-0.2,-0.65) {$\hat{\varepsilon}_p$};
  \node at (0.1,0.65) {$\hat{\eta}_q$};
  \node at (0.35,-0.65) {$\hat{q}$};
\end{tikzpicture}
 \]
We will call $\alpha^{\ast}$ the right dual of $\alpha$ because it is
obtained from $\alpha$ as its mate using the right adjoints of $p$ and $q$.
Similarly, $ ^{*}\alpha$ is called the left dual of $\alpha$ because it is
obtained from $\alpha$ as its mate using the left adjoints of $p$ and $q$.

In general there is no reason why $ ^*\alpha$ should be equal to $\alpha^*$, see \cite{LAU12} for a simple
counterexample.

\begin{definition}[\cite{CKS00}]
Given biadjoints $(p,
\hat{p},\eta_p,\hat{\eta}_p,\varepsilon_p,\hat{\varepsilon}_p)$ and $(q,
\hat{q},\eta_q,\hat{\eta}_q,\varepsilon_q,\hat{\varepsilon}_q)$ and a $2$-cell
$\alpha \maps p \To q$ define
$\alpha^* := \hat{p}\hat{\eta}_q \star_1 \hat{\varepsilon}_p\hat{q} \nn $ and $ ^*\alpha := \varepsilon_G \hat{p} \star_1 \hat{}\eta_q$
as above. Then a $2$-cell $\alpha$ is called a {\em cyclic $2$-cell} if the equation $
^*\alpha=\alpha^*$ is satisfied, or either of the equivalent conditions
$^{\ast\ast}\alpha=\alpha$ or $\alpha^{\ast\ast} = \alpha$ are satisfied.
\end{definition}

A $2$-category in which all the $2$-cells are cyclic with respect to some biadjunction is called a \emph{pivotal} 
$2$-category. In this structure, the following theorem states that $2$-cells are represented up to isotopy:
\begin{theorem}[\cite{CKS00}] \label{T:TheoremIsotopy}
Given a string diagram representing a cyclic $2$-cell, between $1$-cells with chosen biadjoints, then any isotopy of the diagram represents the same $2$-cell. 
\end{theorem}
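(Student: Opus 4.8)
The statement to prove is Theorem~\ref{T:TheoremIsotopy}: that any isotopy of a string diagram representing a cyclic $2$-cell represents the same $2$-cell. This is attributed to \cite{CKS00}, so the plan is to reconstruct the argument from the structural data already laid out in Section~\ref{SSS:Pivotality}, namely the biadjunction $2$-cells and the cyclicity condition $^*\alpha = \alpha^*$.

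The plan is to reduce an arbitrary planar isotopy to a finite composite of \emph{elementary} moves and then verify invariance under each elementary move separately. First I would recall that, by general results on the coherence of string diagrams in monoidal and $2$-categorical settings, any ambient isotopy of a diagram in the plane (fixing the boundary) is generated by three families of local moves: planar isotopies that merely slide $2$-cells past one another horizontally or vertically, the \emph{zig-zag} (snake) straightening moves coming from the adjunction axioms, and the moves that pull a $2$-cell through a cup or cap. The first family is already accounted for by the exchange relations of the ambient $2$-category recalled in Section~\ref{S:Preliminaries}, so it preserves the represented $2$-cell automatically. The second family, straightening a zig-zag, is exactly the content of the adjunction equalities~\eqref{adjunctionref} together with their biadjoint counterparts $\eta', \varepsilon'$, so these too preserve the $2$-cell by definition of the biadjunction.

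The key step, and the one where cyclicity enters, is the third family: dragging a $2$-cell $\alpha \maps p \To q$ around a cup or a cap. Here I would observe that sliding $\alpha$ around a bend and straightening the resulting zig-zag produces precisely one of the two duals $^*\alpha$ or $\alpha^*$ constructed in Section~\ref{SSS:Pivotality}, depending on whether the bend uses the left or the right adjoint structure. The two a priori distinct ways of pulling $\alpha$ through a closed bend differ exactly by the comparison between $^*\alpha$ and $\alpha^*$. The cyclicity hypothesis $^*\alpha = \alpha^*$ (equivalently $\alpha^{**} = \alpha$) is then precisely what guarantees that both ways of sliding yield the same $2$-cell. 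I would verify this by writing out the composite diagram for pulling $\alpha$ around, inserting a zig-zag identity~\eqref{adjunctionref}, and recognising the resulting mate as $^*\alpha$ on one side and $\alpha^*$ on the other; the cyclic equation identifies them.

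The main obstacle I expect is the combinatorial bookkeeping needed to justify that the three families above genuinely generate \emph{all} isotopies, rather than just being invariant moves in their own right. Establishing this generation statement rigorously requires a normal-form argument for planar diagrams up to isotopy, essentially a Reidemeister-type theorem for string diagrams, which is where the delicate work lies; the invariance under each individual move, by contrast, is a direct consequence of the axioms and of cyclicity. Since the result is cited from \cite{CKS00}, I would give the reduction to elementary moves and then dispatch each move using~\eqref{adjunctionref}, the exchange relations, and the cyclicity condition, referring to \cite{CKS00} for the full verification of the generation step and of coherence.
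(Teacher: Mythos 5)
The paper does not actually prove this statement: Theorem~\ref{T:TheoremIsotopy} is imported wholesale from \cite{CKS00}, and the text around it (Sections~\ref{SSS:AdjunctionsIn2Cat} and~\ref{SSS:Pivotality}) only sets up the definitions needed to state it. So there is no internal proof to compare yours against; what can be assessed is whether your reconstruction matches the standard argument that the citation points to, and it does. Your decomposition into (i) height-exchange moves absorbed by the exchange law, (ii) cusp/zig-zag moves absorbed by the biadjunction equalities~\eqref{adjunctionref}, and (iii) rotation of a $2$-cell node through a cap or cup, with cyclicity $^*\alpha = \alpha^*$ exactly compensating the discrepancy between the two mates, is the correct shape of the proof, and you correctly identify cyclicity as the only place where a genuine hypothesis beyond the biadjunctions is used.

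The one caveat is the same one you flag yourself: the claim that every boundary-fixing planar isotopy factors as a finite composite of these elementary moves is not a formality — it is a general-position/Morse-theoretic argument (pass to a generic isotopy, list the codimension-one events: interchange of critical levels, birth/death of cancelling critical points, and rotation of a node past a critical value, then check each event), and it is the real content of the theorem. Since you defer precisely that step to \cite{CKS00}, your text is a reduction-plus-citation rather than a self-contained proof; but given that the paper itself treats the theorem as a black box from the same reference, this is an entirely appropriate level of detail, and the invariance checks you do carry out are the ones where the pivotal axioms actually get used.
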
 
 
\begin{example}
We consider a $2$-category with only one $0$-cell, two $1$-cells $E$ and $F$ whose identites are respectively represented by upward and downward arrows and such that $E \dashv F \dashv E$, that is $E$ and $F$ are biadjoint. We denote respectively by \:
\raisebox{-1mm}{$\caplsl{}$}, \: $\cuprsl{}$, \: $\caprsl{}$, \: \raisebox{-1mm}{$\cuplsl{}$} the units and counits for these adjunctions. Assume that this category has $2$-morphisms given by \: $\identdotusl{}$, \: $\identdotdsl{}$, \: \raisebox{-7mm}{$\crossingup{}{}$}, \: \raisebox{-7mm}{$\crossingdn{}{}$}
Then, requiring that the $2$-morphisms are cyclic in this $2$-category are made by the following equalities:
\[
\suldsl{} \: = \dpdsl{} \: =  \sdrdsl{} \:, \qquad 
\xy 0;/r.13pc/:
  (0,0)*{\xybox{
    (-4,-4)*{};(4,4)*{} **\crv{(-4,-1) & (4,1)}?(1)*\dir{>};
    (4,-4)*{};(-4,4)*{} **\crv{(4,-1) & (-4,1)};
     (4,4)*{};(-18,4)*{} **\crv{(4,16) & (-18,16)} ?(1)*\dir{};
     (-4,-4)*{};(18,-4)*{} **\crv{(-4,-16) & (18,-16)} ?(1)*\dir{}?(0)*\dir{};
     (18,-4);(18,12) **\dir{-};(12,-4);(12,12) **\dir{-};
     (-18,4);(-18,-12) **\dir{-};(-12,4);(-12,-12) **\dir{-};
     (8,1)*{};
     (-10,0)*{};(10,0)*{};
      (4,-4)*{};(12,-4)*{} **\crv{(4,-10) & (12,-10)}?(1)*\dir{}?(0)*\dir{};
      (-4,4)*{};(-12,4)*{} **\crv{(-4,10) & (-12,10)}?(1)*\dir{}?(0)*\dir{>};
     }};
  \endxy
 \: = \: \raisebox{-8mm}{$\crossingdn{}{}$} \: = \:
 \xy 0;/r.13pc/:
  (0,0)*{\xybox{
    (4,-4)*{};(-4,4)*{} **\crv{(4,-1) & (-4,1)}?(1)*\dir{>};
    (-4,-4)*{};(4,4)*{} **\crv{(-4,-1) & (4,1)};
     (-4,4)*{};(18,4)*{} **\crv{(-4,16) & (18,16)} ?(1)*\dir{};
     (4,-4)*{};(-18,-4)*{} **\crv{(4,-16) & (-18,-16)} ?(1)*\dir{}?(0)*\dir{};
     (-18,-4);(-18,12) **\dir{-};(-12,-4);(-12,12) **\dir{-};
     (18,4);(18,-12) **\dir{-};(12,4);(12,-12) **\dir{-};
     (8,1)*{ };
     (-10,0)*{};(10,0)*{};
     (-4,-4)*{};(-12,-4)*{} **\crv{(-4,-10) & (-12,-10)}?(1)*\dir{}?(0)*\dir{};
      (4,4)*{};(12,4)*{} **\crv{(4,10) & (12,10)}?(1)*\dir{}?(0)*\dir{>};
     }};
  \endxy \quad .
\]
and their mirror image through a reflection by a vertical axis.
\end{example}

\subsubsection{Linear~$(3,2)$-polygraphs of isotopies} 
\label{SSS:PolygraphIsotopy}
We define a $3$-polygraphs whose $3$-cells correspond of the isotopy axioms of a pivotal $2$-category, with respect to a set $I$ labelling the strands of the string diagrams and cyclic $2$-morphisms. Let $\mathcal{C}_I$ be the pivotal $2$-category defined by
\begin{itemize}
\item[-] a set $\mathcal{C}_0$ of $0$-cells denoted by $x,y, \ldots$
\item[-] two families of $1$-cells $E_i: x_i \fl y_i$ and $F_i: y_i \fl x_i$ indexed by $I$ such that $E_i \vdash F_i \vdash E_i$.
Note that the identity $2$-cells on $E_i$ and $F_i$ are respectively diagrammatically depicted by:
\[ 1_{E_i} := \begin{tikzpicture}[baseline = 0]
	\draw[-,thick,black] (0,-0.4) to (0,0.4);
     \draw[->,thick,black] (0,-0.2) to (0,0.2);
     \node at (0,-0.6) {$\scriptstyle{i}$};
     \node at (0.3,0) {$y_i$};
     \node at (-0.3,0) {$x_i$};
\end{tikzpicture}  \qquad \qquad 1_{F_i} := \begin{tikzpicture}[baseline = 0]
	\draw[-,thick,black] (0,-0.4) to (0,0.4);
     \draw[<-,thick,black] (0,-0.2) to (0,0.2);
     \node at (0,-0.6) {$\scriptstyle{i}$};
     \node at (0.3,0) {$x_i$};
     \node at (-0.3,0) {$y_i$};
\end{tikzpicture} \]
\item[-] units and counits $2$-cells $\varepsilon_i^+ : E_i \star_0 F_i \dfl 1$, $\eta_i^+: 1 \dfl E_i \star_0 F_i$, $\varepsilon_i^- : F_i \star_0 E_i \dfl 1$ and $\eta_i^- : 1 \dfl F_i \star_0 E_i$ satisfying the biadjunction relations, where the labels of regions are easily deduced and omitted:
\[  \begin{tikzpicture}[baseline = 0,scale=1.2]
  \draw[->,thick,black] (0.3,0) to (0.3,.4);
	\draw[-,thick,black] (0.3,0) to[out=-90, in=0] (0.1,-0.4);
	\draw[-,thick,black] (0.1,-0.4) to[out = 180, in = -90] (-0.1,0);
	\draw[-,thick,black] (-0.1,0) to[out=90, in=0] (-0.3,0.4);
	\draw[-,thick,black] (-0.3,0.4) to[out = 180, in =90] (-0.5,0);
  \draw[-,thick,black] (-0.5,0) to (-0.5,-.4);
  \node at (-0.2,0.6) {$\varepsilon_i^+$};
  \node at (0.1,-0.6) {$\eta_i^-$};
  \node at (-0.5,-0.6) {$\scriptstyle{i}$};
\end{tikzpicture}  = \; \begin{tikzpicture}[baseline=0, scale=1.2]
\draw[->,thick,black] (0,-0.4) to (0,0.4) ;
\node at (0,-0.6) {$\scriptstyle{i}$} ;
\end{tikzpicture}
= \; \begin{tikzpicture}[baseline = 0, scale=1.2]
  \draw[-,thick,black] (0.3,0) to (0.3,-.4);
	\draw[-,thick,black] (0.3,0) to[out=90, in=0] (0.1,0.4);
	\draw[-,thick,black] (0.1,0.4) to[out = 180, in = 90] (-0.1,0);
	\draw[-,thick,black] (-0.1,0) to[out=-90, in=0] (-0.3,-0.4);
	\draw[-,thick,black] (-0.3,-0.4) to[out = 180, in =-90] (-0.5,0);
  \draw[->,thick,black] (-0.5,0) to (-0.5,.4);
   \node at (-0.2,-0.6) {$\varepsilon_i^-$};
  \node at (0.1,0.6) {$\eta_i^+$};
  \node at (0.5,-0.6) {$\scriptstyle{i}$};
\end{tikzpicture}
\qquad \qquad 
\begin{tikzpicture}[baseline = 0,scale=1.2]
  \draw[-,thick,black] (0.3,0) to (0.3,.4);
	\draw[-,thick,black] (0.3,0) to[out=-90, in=0] (0.1,-0.4);
	\draw[-,thick,black] (0.1,-0.4) to[out = 180, in = -90] (-0.1,0);
	\draw[-,thick,black] (-0.1,0) to[out=90, in=0] (-0.3,0.4);
	\draw[-,thick,black] (-0.3,0.4) to[out = 180, in =90] (-0.5,0);
  \draw[->,thick,black] (-0.5,0) to (-0.5,-.4);
  \node at (-0.2,0.6) {$\varepsilon_i^-$};
  \node at (0.1,-0.6) {$\eta_i^+$};
  \node at (-0.5,-0.6) {$\scriptstyle{i}$};
\end{tikzpicture}  = \; \begin{tikzpicture}[baseline=0, scale=1.2]
\draw[<-,thick,black] (0,-0.4) to (0,0.4) ;
\node at (0,-0.6) {$\scriptstyle{i}$} ;
\end{tikzpicture}
= \; \begin{tikzpicture}[baseline = 0, scale=1.2]
  \draw[->,thick,black] (0.3,0) to (0.3,-.4);
	\draw[-,thick,black] (0.3,0) to[out=90, in=0] (0.1,0.4);
	\draw[-,thick,black] (0.1,0.4) to[out = 180, in = 90] (-0.1,0);
	\draw[-,thick,black] (-0.1,0) to[out=-90, in=0] (-0.3,-0.4);
	\draw[-,thick,black] (-0.3,-0.4) to[out = 180, in =-90] (-0.5,0);
  \draw[-,thick,black] (-0.5,0) to (-0.5,.4);
   \node at (-0.2,-0.6) {$\varepsilon_i^-$};
  \node at (0.1,0.6) {$\eta_i^+$};
  \node at (0.5,-0.6) {$\scriptstyle{i}$};
\end{tikzpicture}   \]

\item[-] cyclic $2$-morphisms $\alpha_i : E_i \dfl E_i$ and $\beta_i : F_i \dfl F_i$ with respect to the biadjunction $E_i \vdash F_i \vdash E_i$, respectively represented by a dot on an upward strand or on a downward strand labelled by $i$. By definition, cyclicity  yields the following relations:
\[  \begin{tikzpicture}[baseline = 0,scale=1.2]
  \draw[->,thick,black] (0.3,0) to (0.3,.4);
	\draw[-,thick,black] (0.3,0) to[out=-90, in=0] (0.1,-0.4);
	\draw[-,thick,black] (0.1,-0.4) to[out = 180, in = -90] (-0.1,0);
	\draw[-,thick,black] (-0.1,0) to[out=90, in=0] (-0.3,0.4);
	\draw[-,thick,black] (-0.3,0.4) to[out = 180, in =90] (-0.5,0);
  \draw[-,thick,black] (-0.5,0) to (-0.5,-.4);
\node at (-0.1,0) {$\bullet$};
\node at (0.1,0.3) {$\beta_i$};
  \node at (-0.5,-0.6) {$\scriptstyle{i}$};
\end{tikzpicture}  = \; \begin{tikzpicture}[baseline=0, scale=1.2]
\draw[->,thick,black] (0,-0.4) to (0,0.4) ;
\node at (0,-0.6) {$\scriptstyle{i}$} ;
\node at (0,0) {$\bullet$} ;
\node at (0.3,0.2) {$\alpha_i$};
\end{tikzpicture}
= \; \begin{tikzpicture}[baseline = 0, scale=1.2]
  \draw[-,thick,black] (0.3,0) to (0.3,-.4);
	\draw[-,thick,black] (0.3,0) to[out=90, in=0] (0.1,0.4);
	\draw[-,thick,black] (0.1,0.4) to[out = 180, in = 90] (-0.1,0);
	\draw[-,thick,black] (-0.1,0) to[out=-90, in=0] (-0.3,-0.4);
	\draw[-,thick,black] (-0.3,-0.4) to[out = 180, in =-90] (-0.5,0);
  \draw[->,thick,black] (-0.5,0) to (-0.5,.4);
\node at (-0.1,0) {$\bullet$};
\node at (-0.25,0.3) {$\beta_i$};  
  \node at (0.5,-0.6) {$\scriptstyle{i}$};
\end{tikzpicture}
\qquad \qquad 
\begin{tikzpicture}[baseline = 0,scale=1.2]
  \draw[-,thick,black] (0.3,0) to (0.3,.4);
	\draw[-,thick,black] (0.3,0) to[out=-90, in=0] (0.1,-0.4);
	\draw[-,thick,black] (0.1,-0.4) to[out = 180, in = -90] (-0.1,0);
	\draw[-,thick,black] (-0.1,0) to[out=90, in=0] (-0.3,0.4);
	\draw[-,thick,black] (-0.3,0.4) to[out = 180, in =90] (-0.5,0);
  \draw[->,thick,black] (-0.5,0) to (-0.5,-.4);
\node at (-0.1,0) {$\bullet$};
\node at (0.1,0.3) {$\alpha_i$};
  \node at (-0.5,-0.6) {$\scriptstyle{i}$};
\end{tikzpicture}  = \; \begin{tikzpicture}[baseline=0, scale=1.2]
\draw[<-,thick,black] (0,-0.4) to (0,0.4) ;
\node at (0,-0.6) {$\scriptstyle{i}$} ;
\node at (0,0) {$\bullet$} ;
\node at (0.3,0.2) {$\beta_i$};
\end{tikzpicture}
= \; \begin{tikzpicture}[baseline = 0, scale=1.2]
  \draw[->,thick,black] (0.3,0) to (0.3,-.4);
	\draw[-,thick,black] (0.3,0) to[out=90, in=0] (0.1,0.4);
	\draw[-,thick,black] (0.1,0.4) to[out = 180, in = 90] (-0.1,0);
	\draw[-,thick,black] (-0.1,0) to[out=-90, in=0] (-0.3,-0.4);
	\draw[-,thick,black] (-0.3,-0.4) to[out = 180, in =-90] (-0.5,0);
  \draw[-,thick,black] (-0.5,0) to (-0.5,.4);
\node at (-0.1,0) {$\bullet$};
\node at (-0.25,0.3) {$\alpha_i$};  
  \node at (0.5,-0.6) {$\scriptstyle{i}$};
\end{tikzpicture}   \]
\end{itemize}

We define the \emph{$3$-polygraph of isotopies} $E_I$ presenting the category $\mathcal{C}_I$ as follows:
\begin{itemize}
\item[-] the $0$-cells of $E_I$ are the $0$-cells of $\mathcal{C}_0$.
\item[-] the generating $1$-cells of $E_I$ are the $E_i$ and $F_i$ for $i \in I$, and the $1$-cells of $E_I$ are given by sequences $(E^\pm_i , E^\pm_j , E^\pm_k , \dots)$ with $E^+ = E$ and $E^- = F$.
\item[-] the generating $2$-cells of $E_I$ are given by cup and cap $2$-cells $\varepsilon_i^+, \eta_i^+, \varepsilon_i^-, \eta_i^-$, and cyclic $2$-cells $\alpha_i$ depicted by an upward strand decorated by a dot and labelled by $i$, and its bidual $\beta_i$ represented by a downward strand decorated by a dot and labelled by $i$.
\item[-] the $3$-cells of $E_I$ are given by:
\[  \begin{tikzpicture}[baseline = 0,scale=1.2]
  \draw[->,thick,black] (0.3,0) to (0.3,.4);
	\draw[-,thick,black] (0.3,0) to[out=-90, in=0] (0.1,-0.4);
	\draw[-,thick,black] (0.1,-0.4) to[out = 180, in = -90] (-0.1,0);
	\draw[-,thick,black] (-0.1,0) to[out=90, in=0] (-0.3,0.4);
	\draw[-,thick,black] (-0.3,0.4) to[out = 180, in =90] (-0.5,0);
  \draw[-,thick,black] (-0.5,0) to (-0.5,-.4);
  \node at (-0.5,-0.6) {$\scriptstyle{i}$};
\end{tikzpicture}  \overset{i_1^0}{\fl} \; \begin{tikzpicture}[baseline=0, scale=1.2]
\draw[->,thick,black] (0,-0.4) to (0,0.4) ;
\node at (0,-0.6) {$\scriptstyle{i}$} ;
\end{tikzpicture}
\overset{i_4^0}{\leftarrow} \; \begin{tikzpicture}[baseline = 0, scale=1.2]
  \draw[-,thick,black] (0.3,0) to (0.3,-.4);
	\draw[-,thick,black] (0.3,0) to[out=90, in=0] (0.1,0.4);
	\draw[-,thick,black] (0.1,0.4) to[out = 180, in = 90] (-0.1,0);
	\draw[-,thick,black] (-0.1,0) to[out=-90, in=0] (-0.3,-0.4);
	\draw[-,thick,black] (-0.3,-0.4) to[out = 180, in =-90] (-0.5,0);
  \draw[->,thick,black] (-0.5,0) to (-0.5,.4);
  \node at (0.5,-0.6) {$\scriptstyle{i}$};
\end{tikzpicture}
\qquad \qquad 
\begin{tikzpicture}[baseline = 0,scale=1.2]
  \draw[-,thick,black] (0.3,0) to (0.3,.4);
	\draw[-,thick,black] (0.3,0) to[out=-90, in=0] (0.1,-0.4);
	\draw[-,thick,black] (0.1,-0.4) to[out = 180, in = -90] (-0.1,0);
	\draw[-,thick,black] (-0.1,0) to[out=90, in=0] (-0.3,0.4);
	\draw[-,thick,black] (-0.3,0.4) to[out = 180, in =90] (-0.5,0);
  \draw[->,thick,black] (-0.5,0) to (-0.5,-.4);
  \node at (-0.5,-0.6) {$\scriptstyle{i}$};
\end{tikzpicture}  \overset{i_2^0}{\fl} \; \begin{tikzpicture}[baseline=0, scale=1.2]
\draw[<-,thick,black] (0,-0.4) to (0,0.4) ;
\node at (0,-0.6) {$\scriptstyle{i}$} ;
\end{tikzpicture}
\overset{i_3^0}{\leftarrow} \; \begin{tikzpicture}[baseline = 0, scale=1.2]
  \draw[->,thick,black] (0.3,0) to (0.3,-.4);
	\draw[-,thick,black] (0.3,0) to[out=90, in=0] (0.1,0.4);
	\draw[-,thick,black] (0.1,0.4) to[out = 180, in = 90] (-0.1,0);
	\draw[-,thick,black] (-0.1,0) to[out=-90, in=0] (-0.3,-0.4);
	\draw[-,thick,black] (-0.3,-0.4) to[out = 180, in =-90] (-0.5,0);
  \draw[-,thick,black] (-0.5,0) to (-0.5,.4);
  \node at (0.5,-0.6) {$\scriptstyle{i}$};
\end{tikzpicture}   \]
\[  \begin{tikzpicture}[baseline = 0,scale=1.2]
  \draw[->,thick,black] (0.3,0) to (0.3,.4);
	\draw[-,thick,black] (0.3,0) to[out=-90, in=0] (0.1,-0.4);
	\draw[-,thick,black] (0.1,-0.4) to[out = 180, in = -90] (-0.1,0);
	\draw[-,thick,black] (-0.1,0) to[out=90, in=0] (-0.3,0.4);
	\draw[-,thick,black] (-0.3,0.4) to[out = 180, in =90] (-0.5,0);
  \draw[-,thick,black] (-0.5,0) to (-0.5,-.4);
\node at (-0.1,0) {$\bullet$};
  \node at (-0.5,-0.6) {$\scriptstyle{i}$};
\end{tikzpicture}  \overset{i_1^1}{\fl} \; \begin{tikzpicture}[baseline=0, scale=1.2]
\draw[->,thick,black] (0,-0.4) to (0,0.4) ;
\node at (0,-0.6) {$\scriptstyle{i}$} ;
\node at (0,0) {$\bullet$} ;
\end{tikzpicture}
\overset{i_4^1}{\leftarrow} \; \begin{tikzpicture}[baseline = 0, scale=1.2]
  \draw[-,thick,black] (0.3,0) to (0.3,-.4);
	\draw[-,thick,black] (0.3,0) to[out=90, in=0] (0.1,0.4);
	\draw[-,thick,black] (0.1,0.4) to[out = 180, in = 90] (-0.1,0);
	\draw[-,thick,black] (-0.1,0) to[out=-90, in=0] (-0.3,-0.4);
	\draw[-,thick,black] (-0.3,-0.4) to[out = 180, in =-90] (-0.5,0);
  \draw[->,thick,black] (-0.5,0) to (-0.5,.4);
\node at (-0.1,0) {$\bullet$};
  \node at (0.5,-0.6) {$\scriptstyle{i}$};
\end{tikzpicture}
\qquad \qquad 
\begin{tikzpicture}[baseline = 0,scale=1.2]
  \draw[-,thick,black] (0.3,0) to (0.3,.4);
	\draw[-,thick,black] (0.3,0) to[out=-90, in=0] (0.1,-0.4);
	\draw[-,thick,black] (0.1,-0.4) to[out = 180, in = -90] (-0.1,0);
	\draw[-,thick,black] (-0.1,0) to[out=90, in=0] (-0.3,0.4);
	\draw[-,thick,black] (-0.3,0.4) to[out = 180, in =90] (-0.5,0);
  \draw[->,thick,black] (-0.5,0) to (-0.5,-.4);
\node at (-0.1,0) {$\bullet$};
  \node at (-0.5,-0.6) {$\scriptstyle{i}$};
\end{tikzpicture}  \overset{i_2^1}{\fl} \; \begin{tikzpicture}[baseline=0, scale=1.2]
\draw[<-,thick,black] (0,-0.4) to (0,0.4) ;
\node at (0,-0.6) {$\scriptstyle{i}$} ;
\node at (0,0) {$\bullet$} ;
\end{tikzpicture}
\overset{i_3^1}{\leftarrow} \; \begin{tikzpicture}[baseline = 0, scale=1.2]
  \draw[->,thick,black] (0.3,0) to (0.3,-.4);
	\draw[-,thick,black] (0.3,0) to[out=90, in=0] (0.1,0.4);
	\draw[-,thick,black] (0.1,0.4) to[out = 180, in = 90] (-0.1,0);
	\draw[-,thick,black] (-0.1,0) to[out=-90, in=0] (-0.3,-0.4);
	\draw[-,thick,black] (-0.3,-0.4) to[out = 180, in =-90] (-0.5,0);
  \draw[-,thick,black] (-0.5,0) to (-0.5,.4);
\node at (-0.1,0) {$\bullet$};
  \node at (0.5,-0.6) {$\scriptstyle{i}$};
\end{tikzpicture}   \]
\[ \capldlsl{i}{} \overset{i_4^2}{\fl} \capldrsl{i}{} \qquad \caprdlsl{i}{} \overset{i_3^2}{\fl} \caprdrsl{i}{} \qquad \cupldlsl{i}{} \overset{i_2^2}{\fl} \cupldrsl{i}{} \qquad \cuprdlsl{i}{} \overset{i_1^2}{\fl} \cuprdrsl{i}{} \]
\end{itemize}
Note that the last family of relations (dot moves on caps and cups) are direct consequences of the first families of relations. However, without these $3$-cells the linear~$(3,2)$-polygraph would not be convergent. With these $3$-cells, the linear~$(3,2)$-polygraph $E_I$ is confluent, the proof being similar to the proof of confluence of the $3$-polygraph of pearls in \cite{GM09}.

\subsection{Termination of linear~$(3,2)$-polygraphs by derivation}
\label{SS:TerminationDerivation}
In this subsection, we recall from \cite{GUI06,GM09} a method to prove termination for a $3$-polygraph using the notion of derivation of a $2$-category. This method is extended to the linear setting.

\subsubsection{Modules of $2$-categories}
Let $\mathcal{C}$ be a linear~$(2,2)$-category.
A \emph{$\mathcal{C}$-module} is a functor from the category of contexts $\Ccontext$ to the category $\catego{Ab}$ of abelian groups. Hence, a $\mathcal{C}$-module is specified by an abelian group $M(f)$ for every $2$-cell $f$ in $\mathcal{C}$, and a morphism of groups $M(C): M(f) \fl M(g)$ for every context $C: f \fl g$ in $\mathcal{C}$.

\subsubsection{Example}
Recall from \cite{GM09} a prototypical example of module over a $2$-category, that we will use to prove termination of linear~$(3,2)$-polygraphs using derivations. Let $\catego{Ord}$ be the category of partially ordered sets and monotone maps, viewed as a $2$-category with one $0$-cell, ordered sets as $1$-cells and monotone maps as $2$-cells. We recall that an internal abelian group in $\textbf{Ord}$ is a partially ordered set equipped with a structure of abelian group whose addition is monotone in both arguments.
Let us fix such an internal abelian group $G$, a $2$-category $\mathcal{C}$ and $2$-functors $X: \mathcal{C} \fl \catego{Ord}$ and $Y: \mathcal{C}^{\text{op}} \fl \catego{Ord}$. We consider the $\mathcal{C}$-module $M_{X,Y,G}$ as follows:
\begin{enumerate}[{\bf i)}]
\item Every $2$-cell $f: u \dfl v$ in $\mathcal{C}$ is sent to the abelian group of morphisms $$ M_{X,Y,G}(f) = \textrm{Hom}_{\catego{Ord}} (X(u) \times Y(v) , G ) $$
\item If $w$ and $w'$ are $1$-cells of $\mathcal{C}$ and $C=w \star_0 x \star_0 w'$ is a context from $f: u \Rightarrow v$ to $w\star_0 f \star_0 w'$, then $M_{X,Y,G}(C)$ sends a morphism $a:X(u) \times Y(v) \to G$ in $\catego{Ord}$ to:
$$
\begin{array}{r c l}
X(w)\times X(u) \times X(w') \times Y(w) \times Y(v) \times Y(w')  &\:\longrightarrow\:& G \\
(x',x,x'',y',y,y'') &\:\longmapsto\:& a(x,y).
\end{array}
$$
\item If $g:u'\Rightarrow u$ and $h:v \Rightarrow v'$ are $2$-cells of $\mathcal{C}$ and $C=g\star_1 x \star_1 h$ is a context from $f: u\Rightarrow v$ to $g\star_1 f \star_1 h$, then $M_{X,Y,G}(C)$ sends a morphism $a:X(u \times Y(v)) \to G$ in $\textbf{Ord}$ to
\[
\begin{array}{r c l}
X(u') &\:\longrightarrow\:& G \\
x &\:\longmapsto\:& a \left(  X(g)(x) , Y(h)(y) \right).
\end{array}
\]
\end{enumerate}
By construction, when $\mathcal{C} = P_2^\ast$ is freely generated by a $2$-polygraph $P$, such a $\mathcal{C}$-module is uniquely and entirely determined by the values $X(u)$ and $Y(u)$ for every generating $1$-cell $u \in \Sigma_1$ and the morphisms $X(f) : X(u) \fl X(v) $ for every generating $2$-cell $f: u \dfl v$ in $P_2$.

\subsubsection{Derivations of linear~$(2,2)$-categories}
\label{SSS:TerminationByDerivation}
Let $\mathcal{C}$ be a linear~$(2,2)$-category, and let $M$ be a $\mathcal{U}(\mathcal{C})$-module. A \textit{derivation of $\C$ into $M$} is a map sending every $2$-cell $f$ of $\mathcal{C}$ to an element $d(f) \in M(f)$ such that the following relation holds, for every $i$-composable pair $(f,g)$ of $2$-cells of $\mathcal{C}$: 
$$
d(f\star_i g) \:=\: f\star_i d(g) + d(f) \star_i g.
$$
where $f \star_i d(g)$ (resp. $d(f) \star_i g)$ denotes the value $M(C)(d(g))$ (resp. $M(C')(d(f))$) where $C= f \star_i \square $ (resp. $C' = \square \star_i g$) for any $0 \leq i \leq 1$.
Following \cite[Theorem 4.2.1]{GM09}, we obtain that if 
$P$ is a linear~$(3,2)$-polygraph such that there exist:
\begin{enumerate}[{\bf i)}]
\item Two $2$-functors $X: P_2^\ast \to \catego{Ord}$ and $Y:(P_2^\ast)^{\text{co}} \to \catego{Ord}$ defined on monomials of $P_2^\ell$ such that, for every $1$-cell $a$ in $P_1$, the sets~$X(a)$ and $Y(a)$ are non-empty and, for every generating $3$-cell $\alpha$ in $P_3$, the inequalities $X(s(\alpha))\geq X(h)$ and $Y(s(\alpha))\geq Y(h)$ hold for any $h$ in $\text{Supp}(t(\alpha))$.
\item An abelian group $G$ in $\mathbf{Ord}$ whose addition is strictly monotone in both arguments and such that every decreasing sequence of non-negative elements of $G$ is stationary.
\item A derivation $d$ of $P_2^\ell$ into the module $M_{X,Y,G}$ such that, for every monomial $f$ in $P_2^\ell$, we have $d(f) \geq 0$ and, for every $3$-cell $\alpha$ in $P_3$, the strict inequality $d(s(\alpha))> d(h)$ holds for any $h$ in $\text{Supp}(t(\alpha))$.
\end{enumerate}
 Then the linear~$(3,2)$-polygraph $P$ terminates.

\begin{example}
	For instance, following the proof of termination for the $3$-polygraphs of pearls in \cite[Section 5.5.1]{GM09}, one proves that the linear~$(3,2)$-polygraph $E_I$ of isotopies defined in \ref{SSS:PolygraphIsotopy} is terminating, in two steps:
	\begin{enumerate}[{\bf i)}]
		\item At first, if we consider the derivation \[ d ( \cdot ) = || \cdot ||_{ \{ \varepsilon_i^-, \varepsilon_i^+, \eta_i^-, \eta_i^+ \} } \]  into the trivial module $M_{\ast, \ast, \Z}$ counting the number of oriented caps and cups of a diagram. This enables to reduce the termination of $E_I$ to the termination of the linear $(3,2)$-polygraph $E_I^{'}$ having for $3$-cells the $i_k^2$ for $1 \leq k \leq 4$.
		\item The polygraph $E_I^{'}$ terminates, using the $2$-functors $X$ and $Y$ and the derivation $d$ into the $(E_I)_2^\ast$-module $M_{X,Y,\Z}$ given by:
		\[ X \left( \ident{} \right) = \N, \qquad X \left( \capnonorientedsl{} \right) (i,j) = (0,0), \qquad X \left( \identdot{} \right) (i) = i+1 \] 
		\[ Y \left( \ident{} \right) = \N, \qquad Y \left( \cupnonorientedsl{} \right) (i,j) = (0,0), \qquad Y \left( \identdot{} \right) (i) = i+1 \] 
			\[ d \left( \capnonorientedsl{}  \right) (i,j) = i, \qquad d \left( \cupnonorientedsl{} \right) (i,j) = i, \qquad d \left( \identdot{} \right) (i,j) = 0 \]
for any orientation of the strands and any label on it. The required inequalities are proved in \cite{GM09}.
	\end{enumerate}
\end{example}

\section{Linear rewriting modulo}
\label{S:LinearRewritingModulo}
In this section, we introduce the notion of linear~$(3,2)$-polygraph modulo, for which we define the property of confluence modulo, and give several ways to prove confluence modulo from confluence of critical branchings in the case of termination or decreasingness in the case of quasi-termination. We give a method to compute a hom-basis for a linear~$(2,2)$-category $\mathcal{C}$ using rewriting modulo.

\subsection{Linear $(3,2)$-polygraphs modulo}
\label{SS:LinearPolMod}
The notion of $n$-dimensional polygraph modulo has been developed in \cite{DMpp18} in a non-linear context. In this subsection, we extend this construction by defining linear $(3,2)$-polygraphs modulo. We refer the reader to \cite{DMpp18} for more details about polygraphs modulo.

\begin{definition}
A \emph{linear $(3,2)$-polygraph modulo} is a data $(R,E,S)$ made of
\begin{enumerate}[{\bf i)}]
\item a linear $(3,2)$-polygraph $R = (R_0 , R_1 , R_2 ,R_3)$, whose generating $3$-cells are called \emph{primary rules};
\item a linear $(3,2)$-polygraph $E = (E_0,E_1 ,E_2,E_3)$ such that $E_k=R_k$ for $k= 0,1$ and $E_{2} \subseteq R_{2}$, whose generating $3$-cells are called \emph{modulo rules};
\item $S$ is a cellular extension of $R_{2}^\ell$ the linear free $2$-category generated by $R$, such that the following inclusions of cellular extensions $R \subseteq S \subseteq \ERE$ holds, where the cellular extension \[ \ERE \overset{\gamma^{\ERE}}{\fl} \text{Sph}(R_2^\ell) \] is defined in a similar way than in \cite[Subsubsection 3.1.2]{DMpp18}. The elements of $\ERE$ correspond to $2$-spheres $(u,v) \in R_{2}^\ell$ such that $(u,v)$ is the boundary of a $3$-cell $f$ in $R^\ell_{2}[R_3,E_3,E_3^-]/\text{Inv}(E_3,E_3^-)$, the free linear~$(2,2)$-category generated by $(R_0,R_1,R_2)$ augmented by the cellular extensions $R$, $E$ and the formal inverses $E^-$ of $E$ modulo the corresponding inverse relations, with shape $f = e_1 \star_{2} f_1 \star_{2} e_2$ with $e_1,e_2$ in $E^\ell$ and $f_1$ a rewriting step of $R$.
\end{enumerate}
\end{definition}

 Note that this data defines a linear~$(3,2)$-polygraph $(R_0,R_1,R_2,S)$ that we will denote by $S$ in the sequel.

\subsubsection{Examples}
In the sequel, we will consider the $3$-polygraphs modulo $\ERE$ and $\ER$, whose underlying $1$-polygraph is $(R_0,R_1)$, with $E_2 \subseteq R_2$ and whose set of $3$-cells are respectively defined as follows:
\begin{enumerate}[{\bf i)}]
\item $\ERE$ has a $3$-cell $u \fl v$ whenever there exists a rewriting step $g$ of $R$ and $3$-cells $e,e'$ in $E^\ell$ as in the following diagram:
\[
\xymatrix{
u
  \ar@{-->} [r]
  \ar [d] _-{e}
&
v
\\
u'
  \ar [r] _-{g}
&
v'
  \ar [u] _-{e'}
}
\]
\item $\ER$ has a $3$-cell $u \fl v$ whenever there exists a rewriting step $g$ of $R$ and $3$-cells $e$ in $E^\ell$ as in the following diagram:
\[
\xymatrix{
u
  \ar@{-->} [r]
  \ar[d] _-{e}
&
v
\\
u'
  \ar[r] _-{g}
&
v
  \ar@{->} [u] _-{\fleq}
}
\]
\end{enumerate}

\subsection{Branchings modulo and confluence modulo}
\label{SS:ConfluenceModulo}
In this subsection, we introduce the notion of confluence modulo for a linear $(3,2)$-polygraph modulo $(R,E,S)$ following \cite{DMpp18}. We give a classification of the local branchings modulo, and prove that if $\ERE$ is terminating, confluence of $S$ modulo $E$ is equivalent to the confluence modulo of critical branchings modulo.

\subsubsection{Branchings modulo}
A \emph{branching} of the linear $(3,2)$-polygraph $S$ is a pair~$(f,g)$ of positive $3$-cells of $S^\ell$ such that $s_2^S(f)=s_2^S(g)$. Such a branching is depicted by 
\begin{equation}
\label{E:branching}
\xymatrix @R=1.5em @C=2em {
u 
  \ar [r] ^-{f} 
  \ar [d] _-{\rotatebox{90}{=}}
& 
u'
\\
u
  \ar [r] _-{g} 
&
v'
}
\end{equation}
and will be denoted by $(f,g) : u \fl (u',v')$.
We do not distinguish the branchings~$(f,g)$ and~$(g,f)$.

A \emph{branching modulo $E$} of the linear $(3,2)$-polygraph modulo~$S$ is a triple~$(f,e,g)$ where $f$ is a positive $3$-cell of $S^\ell$, $g$ is aiehter a positive $3$-cell of $S^\ell$ or an identity $3$-cell, and $e$ is a $3$-cell of $E^\ell$. Such a branching is depicted by
\begin{equation}
\label{E:branchingModulo}
\raisebox{0.55cm}{
\xymatrix @R=1.5em @C=2em {
u 
  \ar[r] ^-{f} 
  \ar[d] _-{e}
& 
u'
\\
v
  \ar[r] _-{g} 
&
v'
}}
\qquad\qquad
\big(\text{resp.}
\quad
\raisebox{0.55cm}{
\xymatrix @R=1.5em @C=2em {
u 
  \ar[r] ^-{f} 
  \ar[d] _-{e}
& 
u'
\\
v
}}
\quad
\big)
\end{equation}
when $g$ is non trivial (resp. trivial) and denoted by $(f,e,g) : (u,v) \fl (u',v')$ (resp. $(f,e) : u \fl (u',v)$. Note that any branching $(f,g)$ is a branching $(f,e,g)$ modulo $E$ where $e$ is the identity $3$-cell on $s_2(f)=s_2(g)$.

\subsubsection{Confluence modulo}
A branching modulo $E$ as in (\ref{E:branchingModulo}) is \emph{confluent modulo $E$} if there exists positive $3$-cells~$f',g'$ in~$S^\ell$ and a $3$-cell $e'$ in $E^\ell$ as in the following diagram:
\[
\raisebox{0.55cm}{
\xymatrix @R=1.5em @C=2em {
u
  \ar[r] ^-{f}
  \ar[d] _-{e}
&
u' 
  \ar@{.>}[r] ^-{f'} 
& 
w
  \ar@{.>}[d] ^-{e'}
\\
v
  \ar [r] _-{g}
&
v'
  \ar@{.>}[r] _-{g'} 
&
w'
}}
\]
We then say that the triple $(f',e',g')$ is a confluence modulo $E$ of the branching $(f,e,g)$ modulo $E$. The linear~$(3,2)$-polygraph $S$ is \emph{confluent modulo $E$} if all its branchings modulo $E$ are confluent modulo~$E$.  When $S$ is confluent modulo $E$, a $2$-cell may admit several $S$-normal forms, which are all equivalent modulo~$E$. 

\subsubsection{Church-Rosser property modulo}
\label{SSS:ChurchRosserModulo}
We say that a linear~$(3,2)$-polygraph modulo $(R,E,S)$ is \emph{Church-Rosser modulo} the linear~$(3,2)$-polygraph $E$ if for any $2$-cells $u$,$v$ in $R_2^\ell$ such that there exist a zig-zag sequence 
\[ \xymatrix{ u \ar[r] ^-{f_1} & u_1 & u_2 \ar [l] _-{f_2} \ar[r] ^-{f_3} & \dots \ar [r] ^-{f_{n-2}} & u_{n-1} & u_n \ar [r] ^-{f_n} \ar [l] _-{f_{n-1}} & v } \]
where the $f_i$ are $3$-cells of $E^\ell$ or $R^\ell$, there exist positive $3$-cells $f': u \fl u'$ and $g': v \fl v$ in $S^\ell$ and a $3$-cell $e: u' \fl v'$ in $E^\ell$. In particular, when $S$ is normalizing this implies that for any $2$-cells $u$ and $v$ such that $\cl{u} = \cl{v}$ in $\C$ the category presented by $R \coprod E$, two normal forms $\hat{u}$ and $\hat{v}$ of $u$ and $v$ respectively with respect to $S$ are equivalent modulo $E$. 

\subsubsection{Jouannaud-Kirchner confluence modulo}
\label{SSS:JKConfluence}
In \cite{JouannaudKirchner84}, Jouannaud and Kirchner introduced another notion of confluence modulo $E$, given by two properties that they call confluence modulo $E$ and coherence modulo $E$. Following \cite{JouannaudKirchner84}, we say that 
\begin{enumerate}[{\bf i)}]
\item \emph{JK confluent modulo $E$}, if any branching is confluent modulo $E$,
\item \emph{JK coherent modulo $E$}, if any branching $(f,e) : u \fl (u',v)$ modulo $E$  is confluent modulo~$E$:
\[
\xymatrix @R=1.5em @C=2em {
u
  \ar[r] ^-{f} 
  \ar[d] _-{e}
& 
v \ar@1@{.>} [r] ^-{f'} & v' \ar@{.>}@1 [d] ^-{e'} \\
u'
  \ar@{.>} [rr] _-{g'} 
&  & w  }
\]
in such a way that $g'$ is a positive $3$-cell in $S^\ell$.
\end{enumerate}
However, we prove that this notion of confluence modulo is equivalent to that defined in subsection \ref{SS:ConfluenceModulo}.
\begin{lemma} \label{L:EquivalenceConfluence}
For any linear~$(3,2)$-polygraph  $(R,E,S)$ such that $S$ is terminating, the following assertions are equivalent: 
\begin{enumerate}[{\bf i)}]
\item $S$ is confluent modulo $E$.
\item $S$ is JK confluent modulo $E$ and JK coherent modulo $E$.
\end{enumerate}
\end{lemma}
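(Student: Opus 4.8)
The plan is to prove the two implications separately, with the direction $\textbf{i)} \Rightarrow \textbf{ii)}$ being essentially immediate and the direction $\textbf{ii)} \Rightarrow \textbf{i)}$ requiring the termination hypothesis in an essential way. First I would observe that JK confluence modulo $E$ is exactly the special case of confluence modulo $E$ applied to branchings $(f,g)$ coming from a common source (that is, with $e$ an identity $3$-cell), and that JK coherence modulo $E$ is the special case applied to branchings $(f,e)$ with trivial $g$. So if $S$ is confluent modulo $E$ in the sense of Subsection \ref{SS:ConfluenceModulo}, both JK properties hold by restricting to these two families of branchings; this settles $\textbf{i)} \Rightarrow \textbf{ii)}$ with no recourse to termination.

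For the converse $\textbf{ii)} \Rightarrow \textbf{i)}$, I would take an arbitrary branching modulo $E$, namely $(f,e,g): (u,v) \fl (u',v')$ with $f: u \fl u'$ and $g: v \fl v'$ positive $3$-cells of $S^\ell$ and $e: u \fl v$ in $E^\ell$, and show it is confluent modulo $E$. The strategy is a Newman-style well-founded induction on the source pair $(u,v)$ using the termination of $S$: since $S$ terminates, the relation measuring rewriting steps from $u$ and from $v$ is well-founded, so I may assume every branching modulo $E$ with strictly smaller source is confluent modulo $E$. I would decompose $f = f_1 \star_2 f'_1$ and $g = g_1 \star_2 g'_1$ into a first rewriting step followed by the rest. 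The heart of the argument is to close the bottom local configuration: the two initial steps $f_1: u \fl u_1$ and $g_1: v \fl v_1$ together with $e: u \fl v$ form a local branching modulo $E$, which JK confluence modulo $E$ (when $e$ is trivial or both steps emanate from comparable cells) or JK coherence modulo $E$ (to absorb the $E$-step $e$) allows to close by positive $3$-cells of $S^\ell$ down to cells equivalent modulo $E$.

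The main obstacle, and the step I would carry out most carefully, is the tiling argument that glues these local closures into a global confluence diagram. After applying JK coherence modulo $E$ to the triple $(f_1, e)$ and JK confluence modulo $E$ where needed, one obtains intermediate $2$-cells that are joined to $u_1$ and $v_1$ by positive $3$-cells of $S$ and to each other by a $3$-cell of $E^\ell$; the induction hypothesis then applies to the residual branchings modulo $E$ with sources $u_1$ and $v_1$, which are strictly smaller in the well-founded order because at least one $S$-rewriting step has been consumed. Here I must check that the newly created branchings are genuinely branchings modulo $E$ in the sense of the definition (that the connecting cell lies in $E^\ell$ and the two sides are positive $3$-cells of $S^\ell$), so that the inductive hypothesis is legitimately invoked, and that composing the resulting confluences modulo $E$ by $\star_2$ respects the shape required in Subsection \ref{SS:ConfluenceModulo}. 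This is where the linear setting demands a little extra vigilance, since positivity of $3$-cells and the left-monomial restriction on sources interact with the $\star_2$-composition; but the argument is formally parallel to the confluence-modulo equivalence of \cite{JouannaudKirchner84}, now transported through the polygraphic language of \cite{DMpp18} and the linear rewriting steps recalled in \ref{SS:LinearRewriting}, and the termination of $S$ guarantees that the induction terminates.
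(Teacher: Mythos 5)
Your direction \textbf{i)} $\Rightarrow$ \textbf{ii)} is correct and is exactly the paper's: both JK properties are special cases of confluence modulo $E$, with no use of termination. The converse, however, has a genuine gap, located in the induction. You never specify the well-founded order on source pairs $(u,v)$, and no standard choice makes your argument go through: the residual branchings created by your tiling include branchings whose two legs issue from a common intermediate cell (source pair of the form $(u_1,u_1)$, not componentwise below $(u,v)$), and branchings whose connecting $E$-cell was \emph{created during the chase} and joins a cell reachable from $u$ to a cell reachable from $v$ --- and termination of $S$ gives no control whatsoever across $E$-cells. Even granting a reasonable order (say the multiset extension of $\fl_S^+$, which does handle several of these pairs), the deeper problem remains: your tiling is not a fixed finite pattern. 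Every closure, whether by a JK property or by the induction hypothesis, interleaves a new $E$-cell into the top or bottom row, and flattening that $E$-cell requires a further closure, so nothing bounds the number of invocations of the induction hypothesis needed for the single branching $(f,e,g)$. Writing that ``the termination of $S$ guarantees that the induction terminates'' conflates well-foundedness of the induction order (which never needs termination) with finiteness of the diagram chase (which is the actual issue, and which you do not address). This is precisely the known obstruction to Newman/Huet-style induction in rewriting modulo: such local-to-global arguments need termination of $\ERE$, as in the paper's Theorem \ref{T:ConfluenceTheorem} and Theorem \ref{T:CriticalBranchingLemmaModulo}, and that hypothesis is strictly stronger than the termination of $S$ assumed in this lemma. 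A secondary imprecision: neither JK confluence nor JK coherence applies directly to your local branching $(f_1,e,g_1)$ when both $e$ and $g_1$ are non-trivial; they must be chained, coherence first and confluence after.

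The paper's proof avoids all of this by never decomposing $f$ and $g$ and never inducting; it exploits the fact that the JK properties are \emph{global}, i.e.\ quantified over arbitrary branchings. One applies JK coherence to the whole branching $(f,e)$, which by the very definition of JK coherence closes with a \emph{non-trivial} positive $3$-cell $h \colon v \fl w$ on the $E$-side, together with $f' \colon u' \fl u''$ and $e' \colon u'' \fl w$ in $E^\ell$; then JK confluence to the branching $(h,g)$; then, repeatedly, JK coherence to the branching formed by the latest middle-row cell and the inverse of the latest $E$-cell. Each round appends a positive cell to the top or bottom row and a possibly trivial cell to the middle row issued from $v$, and the iteration continues only while that middle extension is non-trivial; an infinite iteration would therefore yield an infinite $S$-rewriting sequence from $v$, contradicting termination of $S$. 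When the middle extension becomes trivial, the diagram closes by composing the two pending $E$-cells into the single $3$-cell of $E^\ell$ required by the definition of confluence modulo in Subsection \ref{SS:ConfluenceModulo}. Your write-up can be repaired by replacing the step decomposition and the induction with this iteration.
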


\begin{proof}
By definition, the property of confluence modulo $E$ trivially implies both JK confluence modulo $E$ and JK coherence modulo $E$.
Conversely, suppose that the linear~$(3,2)$-polygraph $S$ is JK confluent and JK coherent modulo $E$ and let us consider a branching $(f,e,g)$ modulo $E$ in $S$. If $\ell(e) = 0$, then it is clearly confluent modulo $E$ by JK confluence modulo $E$ so let us assume that $\ell(f) \geq 1$. If $g$ is an identity $3$-cell, then the confluence of the branching $(f,e)$ modulo $E$ is given by JK coherence modulo $E$. Otherwise, by JK coherence modulo $E$ on the branching $(f,e)$, there rewriting sequences $f'$ and $h$ in $S^\ell$ with $h$ non trivial and a $3$-cell $e': t_{2}(f') \fl t_{2}(h)$ in $E^\ell$. Applying JK confluence modulo on the branching $(h,g)$ of $S$, there exists positive $3$-cells $g'$ and $h'$ in $S^\ell$ and a $3$-cell $e'': t_{2}(h') \fl t_{2}(g')$ in $E^\ell$. By JK coherence modulo $E$ on the branching $(e'^-,h')$ modulo $E$, we get the existence of positive $3$-cells $f''$ and $h''$ in $S^\ell$ and a $3$-cell $e''': t_{2}(f'') \fl t_{2}(h'')$ in $E^\ell$. This situation can be depicted by:
\[
\xymatrix @R=2.5em @C=2.5em {
u
  \ar[r] ^-{f} 
  \ar[d] _-{e}
& 
u' \ar@{.>} [r] ^-{f'} ^-{}="1"  & u'' \ar@{.>} [rr] ^-{f''} ^-{}="3" \ar@{.>} [d] ^-{e'} & & u''' \ar@{.>}[d] _-{e'''}  \\
v \ar@{.>} [rr] |-{h} _-{}="2" \ar[d] _-{\rotatebox{90}{=}} & &  w \ar@{.>} [r] |-{h'} _-{}="2"  & w' \ar@{.>} [d] _-{e''} \ar@{.>} [r] |-{h''} & w'' \\
v \ar@1@{.>} [r] |-{g}  & v' \ar@1@{.>} [rr] |-{g'} & & v'' & 
\ar@2{} "1,2";"2"!<-55pt,0pt> |{\text{JK coh.}}
\ar@2{} "3,2";"2"!<-25pt,0pt> |{\text{JK confl.}}
\ar@2{} "3";"2,4"!<0pt,0pt> |{\text{JK coh.}}}
\]
At this point, either $h''$ is trivial and thus $e''': u''' \fl w'$ so that the branching $(f,e,g)$ is confluent modulo, or it is non-trivial and we can apply JK coherence on the branching $(h'',e'')$. Since $S$ is terminating, this process can not apply infinitely many times, and thus in finitely many steps we prove the confluence modulo of the branching $(f,e,g)$.
\end{proof}

Now, following \cite[Theorem 5]{JouannaudKirchner84} and Lemma \ref{L:EquivalenceConfluence}, given a a linear~$(3,2)$-polygraph modulo $(R,E,S)$ such that $S$ is terminating, the following properties are equivalent:
\begin{enumerate}[{\bf i)}]
\begin{multicols}{2}
\item $S$ is confluent modulo $E$.
\item $S$ is Church-Rosser modulo $E$.
\end{multicols}
\end{enumerate}

\subsubsection{Local branchings modulo}
A branching $(f,g)$ of the linear~$(3,2)$-polygraph $S$ is \emph{local} if $f,g$ are $3$-cells of~$S^\ast$ of length $1$. 
A branching $(f,e,g)$ modulo $E$ is \emph{local} if $f$ is a $3$-cell of $\So$, $g$ is either a positive $3$-cell of $S^\ell$ or an identity and $e$ a $3$-cell of$E^\ell$ such that $\ell(g) + \ell(e) = 1$.
Local branchings belong to one of the following families:
\begin{enumerate}[{\bf i)}]
\item \emph{local aspherical} branchings of the form:
\[
\xymatrix @R=1.5em @C=2em{
u 
  \ar[r] ^-{f} 
  \ar[d] _-{\rotatebox{90}{=}}
& 
v
  \ar[d] ^-{\rotatebox{90}{=}}
\\
u
  \ar[r] _-{f} 
&
v
}
\]
where $f$ is a $3$-cell of $\So$.
\item \emph{local Peiffer} branchings of the form:
\[
\xymatrix @R=1.5em @C=2em{
u\star_i v + w
  \ar[r] ^-{f\star_i v} 
  \ar[d] _-{\rotatebox{90}{=}}
& 
u'\star_i v +w
\\
u\star_i v + w
  \ar[r] _-{u\star_i g} 
&
u\star_i v' + w
}
\]
where $0\leq i \leq n-2$, $w$ is a $2$-cell of $R_2^\ell$ and $f$ and $g$ are positive $3$-cells in $\So$.
\item \emph{local additive} branchings of the form:
\[
\xymatrix @R=1.5em @C=2em{
u + v 
  \ar[r] ^-{f + v} 
  \ar[d] _-{\rotatebox{90}{=}}
& 
u' + v
\\
u + v
  \ar[r] _-{u + g} 
&
u + v'
}
\]
where $f$ and $g$ are positive $3$-cells in $\So$.
\item \emph{local Peiffer branchings modulo} of the form:
\begin{equation*}
\label{E:LocalPeifferModulo}
\xymatrix @R=1.5em @C=2em {
u\star_i v + w
  \ar[r] ^-{f\star_i v} 
  \ar[d] _-{u\star_i e}
&  
u'\star_i v + w
\\
u\star_i v' + w  &
}
\qquad\qquad
\xymatrix @R=1.5em @C=2em {
v \star_i u + w 
  \ar[r] ^-{v\star_i f} 
  \ar[d] _-{e'\star_i u}
& 
v \star_i u' + w
\\
v '\star_i u + w &
}
\end{equation*}
where $0\leq i \leq n-2$, $w$ is a $2$-cell of $R_2^\ell$, $f$ is a $3$-cell in $\So$ and $e,e'$ are $3$-cells of $E^\ell$ of length $1$.
\item \emph{local additive branchings modulo} of the form:
\[
\xymatrix @R=1.5em @C=2em {
u + v 
  \ar[r] ^-{f + v} 
  \ar[d] _-{u + e}
& 
u' + v
\\
u + v' &
}
\qquad\qquad
\xymatrix @R=1.5em @C=2em {
w + u 
  \ar[r] ^-{w + f} 
  \ar[d] _-{e' + u}
& 
w + u'
\\
w' + u &
}
\]
where $f$ is a positive $3$-cell in $\So$ and $e,e'$ are $3$-cell in $\Eo$ of length $1$.
\item \emph{overlapping branchings} are the remaining local branchings:
\[
\xymatrix @R=1.5em @C=2em {
u 
  \ar[r] ^-{f} 
  \ar[d] _-{\rotatebox{90}{=}}
& 
v
\\
u
  \ar[r] _-{g} 
&
v'
}
\]
where $f$ and $g$ are positive $3$-cells in $\So$.
\item \emph{overlapping branchings modulo} are the remaining local branchings modulo:
\[
\xymatrix @R=1.5em @C=2em {
u 
  \ar[r] ^-{f} 
  \ar[d] _-{e}
& 
v
\\
v' & }
\]
where $f$ is a positive $3$-cell in $\So$ and $e$ is a $3$-cell in $E^\ell$ of length $1$.
\end{enumerate}

We say that $S$ is \emph{locally confluent modulo~$E$} if any of its local branchings modulo $E$ is confluent modulo~$E$. From \cite[Theorem 4.1.4, Proposition 4.2.1]{DMpp18}, we have the following result which is an abstract rewriting result and does not depend on the linear context:

\begin{theorem} \label{T:ConfluenceTheorem}
Let $(R,E,S)$ be a linear~$(3,2)$-polygraph modulo such that $\ERE$ is terminating. The following assertions are equivalent:
\begin{enumerate}[{\bf i)}]
\item $S$ is confluent modulo $E$,
\item $S$ local confluent modulo $E$,
\item $S$ satisfies the following properties \textbf{a)} and \textbf{b)}:
\begin{enumerate}[{\bf a)}]
\item any local branching $(f,g) : u \fl (v,w)$ with $f$ in $\So$ and $g$ in $\Ro$ is confluent modulo $E$:
\[
\xymatrix @R=2em @C=2em {
u
  \ar[r] ^-{f} 
  \ar[d] _-{\rotatebox{90}{=}}
& 
v \ar@1@{.>} [r] ^-{f'} & v' \ar@1@{.>} [d] ^-{e'} \\
u
  \ar[r] _-{g} 
& w \ar@1@{.>} [r] & w' 
}
\]			
\item any local branching $(f,e):u \fl (v,u')$ modulo $E$ with $f$ in $\So$ and $e$ in $E^\ell$ of length $1$ is confluent modulo $E$:
\[
\xymatrix @R=2em @C=2em {
u
  \ar[r] ^-{f} 
  \ar[d] _-{e}
& 
v \ar@1@{.>} [r] ^-{f'} & v' \ar@1@{.>} [d] ^-{e'} \\
u'
  \ar@{.>} [rr] _-{g'} 
&  & w }
\]
\end{enumerate}
\end{enumerate}
\end{theorem}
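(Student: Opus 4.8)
The plan is to prove Theorem \ref{T:ConfluenceTheorem} by reducing everything to the abstract confluence-modulo results of \cite{DMpp18}, combined with the linear critical-pair machinery. The logical skeleton is the cycle of implications $\textbf{i)} \Rightarrow \textbf{ii)} \Rightarrow \textbf{iii)} \Rightarrow \textbf{i)}$. The implication $\textbf{i)} \Rightarrow \textbf{ii)}$ is immediate: local branchings modulo are a special case of branchings modulo, so global confluence modulo $E$ trivially restricts to local confluence modulo $E$. The genuinely substantive step is $\textbf{ii)} \Rightarrow \textbf{i)}$, which is exactly the statement that, under the termination hypothesis on $\ERE$, local confluence modulo implies global confluence modulo. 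This is precisely \cite[Theorem 4.1.4]{DMpp18}, applied to the underlying abstract rewriting system. I would invoke it directly, after checking that the termination of $\ERE$ supplies the well-founded induction principle on which the Newman-style argument rests. Since this part of the theorem is purely abstract (concerning positive $3$-cells and $E$-cells, not the linear combination structure), the linear context plays no role and the cited result transfers verbatim.

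Next I would turn to the equivalence $\textbf{ii)} \Leftrightarrow \textbf{iii)}$, which is the reduction from \emph{all} local branchings modulo to the two distinguished families \textbf{a)} and \textbf{b)}. The idea is to go through the classification of local branchings modulo into the seven families listed above (aspherical, Peiffer, additive, Peiffer modulo, additive modulo, overlapping, overlapping modulo). First I would dispatch the ``automatically confluent'' families: local aspherical branchings are confluent by definition; local Peiffer and local additive branchings are confluent modulo $E$ because the two reductions act on disjoint parts of the $2$-cell (for Peiffer, disjoint $\star_i$-factors; for additive, disjoint monomial summands), so they commute and close up with $e'$ an identity. The local Peiffer modulo and local additive modulo branchings close in the same way, with the $E$-step simply transported across the independent reduction, yielding confluence modulo $E$ with a single $E$-cell on the cofactor. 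This leaves the overlapping branchings, which correspond to case \textbf{a)} (an $S$-step against an $R$-step with coincident source), and the overlapping branchings modulo, which correspond to case \textbf{b)} (an $S$-step against a length-one $E$-cell). Thus confluence modulo $E$ of families \textbf{a)} and \textbf{b)} is equivalent to confluence modulo $E$ of all local branchings modulo. This is the linear analogue of \cite[Proposition 4.2.1]{DMpp18}.

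I expect the main obstacle to be the careful bookkeeping in the commutation arguments for the additive and Peiffer families in the \emph{linear} setting. In contrast with the abstract setting of \cite{DMpp18}, here a single $2$-cell is a linear combination of monomials, and a rewriting step of $S$ acts on one monomial in a prescribed context while leaving a remainder $2$-cell $h$ untouched (recall the definition of rewriting step, which forbids the reduced monomial from appearing in $h$). I would need to verify that when $f$ and $g$ reduce independent summands $u$ and $v$, the restriction to positive $3$-cells does not obstruct closing the diagram --- precisely the subtlety flagged in the introduction, where restricted rewritings can break naive confluence. The termination hypothesis on $\ERE$ is what rescues this: it guarantees that after performing $f$ and $g$ the residual reductions needed to realign the two results exist as positive $3$-cells, and that the whole closure terminates. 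Concretely, I would argue that an additive or Peiffer branching modulo factors through the abstract classification once the ambient linear remainder $h$ and the independent cofactors are treated as fixed context, so that the closure reduces to the abstract statement already established. The remaining families \textbf{a)} and \textbf{b)} are then left untouched as the irreducible content, exactly matching the two conditions in \textbf{iii)}.

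Finally, having established $\textbf{i)} \Rightarrow \textbf{ii)} \Leftrightarrow \textbf{iii)}$ and $\textbf{ii)} \Rightarrow \textbf{i)}$, the three assertions are equivalent, completing the proof. Throughout, the only hypothesis used is the termination of $\ERE$, which underwrites both the Newman-style induction in $\textbf{ii)} \Rightarrow \textbf{i)}$ and the existence of closing positive $3$-cells in the commutation lemmas; the result is otherwise an abstract rewriting statement, so I would emphasize that no feature specific to the pivotal or isotopy setting is needed here.
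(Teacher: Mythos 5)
Your handling of \textbf{i)} $\Leftrightarrow$ \textbf{ii)} is fine and agrees with the paper, which in fact offers no proof at all: it observes that all three assertions concern only the underlying abstract rewriting system and invokes \cite[Theorem 4.1.4, Proposition 4.2.1]{DMpp18} directly. The gap is in your argument for \textbf{ii)} $\Leftrightarrow$ \textbf{iii)}, and it starts with a misreading of condition \textbf{iii)}. Family \textbf{a)} is \emph{not} ``the overlapping branchings'': it consists of all local branchings $(f,g)$ whose first leg is an $S$-step and whose second leg is an $R$-step, whatever their shape (Peiffer, additive, aspherical or overlapping), and family \textbf{b)} of all branchings of an $S$-step against a length-one $E$-cell. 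Conversely, the overlapping branchings of the classification have \emph{both} legs in $\So$, so they are not instances of \textbf{a)}. Because of this misidentification, the substantive direction \textbf{iii)} $\Rightarrow$ \textbf{ii)} is never actually addressed: to close an arbitrary local branching $(f,g)$ with $f,g$ in $\So$, one must use the defining inclusion $S \subseteq \ERE$ to decompose the second leg as $g = e_1 \star_2 g_1 \star_2 e_2$ with $g_1$ a rewriting step of $R$ and $e_1,e_2$ in $E^\ell$, and then combine condition \textbf{b)} (applied to the branching $(f,e_1)$) with condition \textbf{a)} and the noetherian induction furnished by the termination of $\ERE$. No such decomposition appears anywhere in your proposal; asserting that overlapping branchings ``correspond to case \textbf{a)}'' simply conceals the step where all the work lies.

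A second, related error: your claim that local Peiffer and additive branchings (and their modulo variants) ``commute and close up with $e'$ an identity'' is false in the linear setting, precisely because of the positivity restriction on rewriting steps --- the would-be closing cells $u' \star_i g + w$, $f \star_i v' + w$, $f + v'$, etc.\ need not be rewriting steps when, for instance, $u' \star_i v$ lies in $\text{Supp}(w)$ or $u$ lies in $\text{Supp}(v')$. Handling them requires Lemma \ref{L:CellDecomposition} together with Huet's double noetherian induction, and in this paper that work belongs to the \emph{next} statement, the critical pair lemma (Theorem \ref{T:CriticalBranchingLemmaModulo}), which reduces conditions \textbf{a)} and \textbf{b)} further to critical branchings. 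You have conflated the two results: Theorem \ref{T:ConfluenceTheorem} involves no dispatching of Peiffer or additive branchings whatsoever --- they remain inside families \textbf{a)} and \textbf{b)} --- and is, as you yourself note in your closing sentence, a purely abstract statement about the rewriting system $(S,E)$; the linear subtleties you attempt to dispatch only enter one theorem later, and there they cannot be dispatched by commutation but only by the termination-based induction.
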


\subsubsection{Critical branchings}
\label{SSS:CriticalBranchings}
Let $\sqsubseteq$ be the order on monomials of the linear~$(3,2)$-polygraph $S$ defined by $f \sqsubseteq g$ if there exists a context $C$ of $R_2^\ast$ such that $g = C[f]$,
a \emph{critical branching modulo $E$} is an overlapping local branching modulo $E$ that is minimal for the order $\sqsubseteq$.

\begin{theorem}[Linear critical branching lemma modulo]
\label{T:CriticalBranchingLemmaModulo}
Let $(R,E,S)$ be a linear $(3,2)$-polygraph modulo such that $\ERE$ is terminating. Then $S$ is locally confluent modulo $E$ if and only if the two following conditions hold
\begin{itemize}
\item[$\mathbf{a_0)}$] any critical branching $(f,g)$ with $f$ positive $3$-cell in
$\So$ and $g$ positive $3$-cell in $\Ro$ is confluent modulo $E$:
\[
\xymatrix @R=2em @C=2em {
u
   \ar[r] ^-{f}
   \ar[d] _-{\rotatebox{90}{=}}
&
v \ar@1@{.>} [r] ^-{f'} & v' \ar@1@{.>} [d] ^-{e'} \\
u
   \ar[r] _-{g}
& w \ar@1@{.>} [r] & w'
}
\]
\item[$\mathbf{b_0)}$] any critical branching $(f,e)$ modulo $E$ with $f$ in
$\So$ and $e$ in $E^\ell$ of length $1$ is confluent modulo $E$:
\[ \xymatrix @R=2em @C=2em {
u
   \ar[r] ^-{f}
   \ar[d] _-{e}
&
v \ar@1@{.>} [r] ^-{f'} & v' \ar@1@{.>} [d] ^-{e'} \\
u'
   \ar@{.>} [rr] _-{g'} _-{}="1"
&  & w }
\]
\end{itemize}
\end{theorem}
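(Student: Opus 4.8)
The plan is to reduce Theorem~\ref{T:CriticalBranchingLemmaModulo} to Theorem~\ref{T:ConfluenceTheorem}, which already provides the equivalence between local confluence modulo~$E$ and the confluence of the two families of local branchings described in conditions~\textbf{a)} and~\textbf{b)}. Since Theorem~\ref{T:ConfluenceTheorem} is stated to be an abstract rewriting result independent of the linear context, the remaining work is entirely structural: I must show that, in order to establish conditions~\textbf{a)} and~\textbf{b)} for \emph{all} local branchings modulo~$E$, it suffices to check only the critical ones $\mathbf{a_0)}$ and $\mathbf{b_0)}$. The forward implication is trivial since critical branchings are particular instances of local branchings, so the entire content is the converse: assuming confluence modulo~$E$ of the critical branchings, propagate it to every local branching modulo~$E$.

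The key step is to run through the classification of local branchings modulo given after Theorem~\ref{T:ConfluenceTheorem} and dispatch each family. First I would handle the aspherical branchings, which are confluent modulo~$E$ trivially. For the additive branchings and the local additive branchings modulo, I would use the linearity of the source and target maps together with the fact that the two rewriting steps act on disjoint summands $u+v$; these are confluent by rewriting each summand independently, with no overlap to analyze. For the Peiffer branchings and the local Peiffer branchings modulo, where the two redexes sit in $\star_i$-composable but non-overlapping positions $u \star_i v$, I would again exploit the exchange relations and $\K$-linearity of $\star_1$ to complete the confluence diagram by applying each rewriting step in the context left by the other; here the crucial point is that no new overlap is created, so confluence is automatic. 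The remaining families are the overlapping branchings and the overlapping branchings modulo, and these are precisely where genuine work happens.

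For the overlapping cases I would invoke the order $\sqsubseteq$ on monomials from~\ref{SSS:CriticalBranchings} and argue that every overlapping local branching (resp.\ overlapping local branching modulo~$E$) is an instance $C[b]$ of a critical branching $b$ obtained by applying a context $C$ of $R_2^\ast$ to a minimal overlap. Using Lemma~\ref{L:CellDecomposition} to decompose the elementary $3$-cells involved, together with the stability of rewriting steps under contexts (recalling that one must respect the positivity constraint on the set $R$ of rules, so that the monomial created does not already appear in the ambient $2$-cell), I would transport a chosen confluence modulo~$E$ of $b$ through the context $C$, yielding a confluence modulo~$E$ of $C[b]$. The indexed (right, left, multi-indexed) instances must be addressed via the remark from \cite{LAF03,GM09} that it suffices to check confluence of indexed branchings on $2$-cells in normal form with respect to $S$. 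Finally, assembling the dispatched families, I would conclude that all local branchings modulo~$E$ are confluent modulo~$E$, hence by Theorem~\ref{T:ConfluenceTheorem} that $S$ is locally confluent modulo~$E$.

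The main obstacle I anticipate is the correct treatment of the overlapping branchings modulo~$E$ in the linear and modulo setting simultaneously: unlike the non-modulo linear case treated in \cite{AL16}, a local branching modulo~$E$ mixes a rewriting step $f$ of $\So$ with a modulo step $e$ of $E^\ell$, and the notion of minimal overlap for $\sqsubseteq$ must account for the possibility that the context stripping away the non-minimal part interacts with the $E$-step rather than the $R$-step. I would need to verify carefully that contextualization of a critical branching modulo preserves the confluence diagram even when the completing cells pass through the congruence generated by $E$, which is where the termination hypothesis on $\ERE$ is essential (it guarantees the completing rewriting sequences exist and that the Newman-type argument underlying Theorem~\ref{T:ConfluenceTheorem} applies). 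Keeping track of the positivity restriction on $R$-steps throughout the contextualization, so as not to silently use a forbidden rewriting, is the subtle bookkeeping point.
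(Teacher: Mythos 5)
There is a genuine gap, and it sits exactly where you declare the work to be trivial. You claim that additive branchings, Peiffer branchings, and their modulo versions are confluent ``automatically'', by rewriting each summand (or each factor) independently with ``no overlap to analyze''. This is false in the linear setting, and it is the central subtlety of linear rewriting that this theorem has to confront. For an additive branching $(f+v,\,u+g)$ on $u+v$, the obvious completing cells are $u'+g$ and $f+v'$; but $f+v'$ need not be a \emph{positive} rewriting step of $S$, for instance when $u \in \text{Supp}(v')$, since a rewriting step requires that the monomial being rewritten does not occur in the remainder. The same failure occurs for Peiffer branchings (e.g.\ when $u'\star_i v \in \text{Supp}(w)$) and for the modulo variants. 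So these families cannot be dispatched by linearity alone; if all such cells were allowed as rewriting steps, the polygraph could not be terminating in the first place, which is precisely why the notion of rewriting step is restricted.

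The paper's proof closes this gap as follows: each problematic completing $3$-cell is an elementary $3$-cell of $S^\ell$, hence by Lemma~\ref{L:CellDecomposition} it decomposes as $f_1 \star_2 f_2^-$ with $f_1,f_2$ positive of length at most $1$; this produces a new, smaller branching (e.g.\ $(f_2,g_2)$), whose confluence modulo $E$ is obtained from a \emph{Huet double noetherian induction} on the terminating polygraph $\Saux$. This is the actual place where termination of $\ERE$ is used in the critical pair lemma itself --- not merely, as you assert, to make ``the Newman-type argument underlying Theorem~\ref{T:ConfluenceTheorem}'' work. Your treatment of the genuinely overlapping cases (contextualization $C[b]$ of a critical branching $b$, minimality for $\sqsubseteq$) does match the paper, but without the decomposition-plus-induction mechanism for the Peiffer and additive families your proof of the converse implication does not go through.
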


\begin{proof}
By Theorem~\ref{T:ConfluenceTheorem}, the local confluence of $S$ modulo $E$ is equivalent to both conditions {\bf a)} and {\bf b)}. Let us prove that the condition {\bf a)} (resp. {\bf b)}) holds if and only if the condition $\mathbf{a_0)}$ (resp. $\mathbf{b_0)}$) holds. One implication is trivial, let us prove the converse implication. To do so, let us proceed by Huet's double noetherian induction as introduced in \cite{Huet80} on the polygraph modulo $S^{\amalg}$ defined in \cite{DMpp18} which is terminating since $\ERE$ is assumed terminating. We refer to \cite{DMpp18} for further details on this double induction.  

Following the proof of the linear critical pair lemma in \cite{GHM17}, we assume that condition $\mathbf{a_0)}$ holds and prove condition {\bf a)}.
Let us consider a local branching $(f,g)$ of $S$ modulo $E$ of source $(u,v)$ with $f$ and $g$ positive $3$-cells in $\So$ and $\Ro$ respectively. Let us assume that any local branching of source $(u',v')$ such that there is a $3$-cell $(u,v) \fl (u',v')$ in $\Saux$ is confluent modulo $E$. The local branching $(f,g)$ is either a local Peiffer branching, an additive branching or an ovelapping branching. We prove that for each case, $(f,g)$ is confluent modulo $E$.
\begin{enumerate}[{\bf i)}]
\item If $(f,g)$ is a Peiffer branching of the form \[
\xymatrix @R=1.5em @C=2em{
u\star_i v + w
  \ar[r] ^-{f\star_i v} 
  \ar[d] _-{\rotatebox{90}{=}}
& 
u'\star_i v +w
\\
u\star_i v + w
  \ar[r] _-{u\star_i g} 
&
u\star_i v' + w
}
\]
where $0\leq i \leq n-2$, $w$ is a $2$-cell of $R_2^\ell$, $f$ is a positive $3$-cell in $\So$ and $g$ is a positive $3$-cell in $\Ro$, there exist elementary $3$-cells in $S^\ell$ as follows:
\[
\xymatrix @R=1.5em @C=2em{
u\star_i v + w
  \ar[r] ^-{f\star_i v} 
  \ar[d] _-{\rotatebox{90}{=}}
& 
u'\star_i v +w \ar@{.>} [r] ^-{u' \star_i g + w} & u' \star_i v' + w \ar [d] ^-{\rotatebox{90}{=}}  
\\
u\star_i v + w
  \ar[r] _-{u\star_i g} 
&
u\star_i v' + w \ar@{.>} [r] _-{f \star_i v' + w} & u' \star_i v' + w 
}
\]
However, these $3$-cells are not necessarily positive, for instance if $u'v \in \text{Supp}(w)$ or \linebreak $uv' \in \text{Supp}(w)$. By Lemma \ref{L:CellDecomposition}, there exist positive $3$-cells $f_1,f_2,g_1,g_2$ in $S^\ell$ of length at most $1$ such that $f \star_i v' + w = f_1 \star_2 f_2^-$ and $u' \star_i g + w = g_1 \star_2 g_2^-$. Then, the $3$-cells $f_2$ and $g_2$ of $S^\ell$ have the same $2$-source and by assumption, the branching $(f_2,g_2)$ is confluent modulo $E$, so there exist positive $3$-cells $f'$ and $g'$ in $S^\ell$ and a $3$-cell $e$ in $E^\ell$ as follows:
\[
\xymatrix @R=2em @C=3em{
u\star_i v + w
  \ar[r] ^-{f\star_i v + w} 
  \ar[d] _-{\rotatebox{90}{=}} & u'\star_i v +w \ar[d] ^-{\rotatebox{90}{=}} \ar@1 [rr] ^-{f_1} & & \ar [d] ^-{\rotatebox{90}{=}} \ar [r]^-{f'} & \ar [ddd] ^-{e'} \\
u\star_i v + w
  \ar[r] ^-{f\star_i v + w} 
  \ar[d] _-{\rotatebox{90}{=}}
& 
u'\star_i v +w \ar@{.>} [r] ^-{u' \star_i g + w} & u' \star_i v' + w \ar [d] ^-{\rotatebox{90}{=}} \ar [r] ^-{f_2} &   
\\
u\star_i v + w 
  \ar[r] _-{u\star_i g + w} 
  \ar[d] _-{\rotatebox{90}{=}}
&
u\star_i v' + w \ar@{.>} [r] _-{f \star_i v' + w} \ar [d] _-{\rotatebox{90}{=}} & u' \star_i v' + w  \ar [r] _-{g_2} & \ar [d] ^-{\rotatebox{90}{=}} & \\
u\star_i v + w 
  \ar[r] _-{u\star_i g + w} &  u\star_i v' + w \ar [rr] _-{g_1} & & \ar [r] _-{g'} & 
}
\]
which proves the confluence modulo of the branching $(f,g)$.
\item If $(f,g)$ is an additive branching of the form 
\[
\xymatrix @R=1.5em @C=2em{
u + v 
  \ar[r] ^-{f + v} 
  \ar[d] _-{\rotatebox{90}{=}}
& 
u' + v
\\
u + v
  \ar[r] _-{u + g} 
&
u + v'
}
\]
where $f$ is positive $3$-cells of $\So$ and $g$ is a positive $3$-cell of $\Ro$, there exist elementary $3$-cells in $S^\ell$ as follows:
\[
\xymatrix @R=1.5em @C=2em{
u + v 
  \ar[r] ^-{f + v} 
  \ar[d] _-{\rotatebox{90}{=}}
& 
u' + v \ar@{.>} [r] ^-{u' + g} & u' + v' \ar [d] ^-{\rotatebox{90}{=}}
\\
u + v
  \ar[r] _-{u + g} 
&
u + v' \ar@{.>} [r] _-{f + v'} & u' + v'
}
\]
However, these $3$-cells are not necessarily positive, for instance if $u \in \text{Supp}(v)$ or $u \in \text{Supp}(v')$. By Lemma \ref{L:CellDecomposition}, there exist positive $3$-cells $f_1,f_2,g_1,g_2$ in $S^\ell$ of length at most $1$ such that $f \star_i v' + w = f_1 \star_2 f_2^-$ and $u' \star_i g + w = g_1 \star_2 g_2^-$. We then prove the confluence modulo of $(f,g)$ in a same fashion as for case \textbf{i)}.
\item If $(f,g)$ is an overlapping branching of $S$ with $f$ in $\So$ and $g$ in $\Ro$ that is not critical, then by definition there exists a context $C= m_1 \star_1 (m_2 \star_0 \square \star_0 m_3 ) \star_1 m_4$ of $R_2^\ast$  and positive $3$-cells $f'$ and $g'$ in $S^\ell$ and $R^\ell$ respectively such that $f = C[f']$ and $g = C[g']$, and the branching $(f',g')$ is critical. By property ${\bf a_0)}$, the branching $(f',g')$ is confluent modulo $E$, so that there exist positive $3$-cells $f_1$ and $g_1$ in $S^\ell$ and a $3$-cell $e$ in $E^\ell$ as follows:
\[
\raisebox{0.55cm}{
\xymatrix @R=1.5em @C=2em {
u
  \ar [r] ^-{f'}
  \ar [d] _-{\rotatebox{90}{=}}
&
u' 
  \ar@{.>}[r] ^-{f_1} 
& 
w
  \ar@{.>}[d] ^-{e}
\\
u
  \ar[r] _-{g'}
&
v'
  \ar@{.>}[r] _-{g_1} 
&
w'
}}
\]
inducing a confluence modulo of the branching $(f,g)$:
\[
\raisebox{0.55cm}{
\xymatrix @R=1.5em @C=2em {
C[u]
  \ar [r] ^-{f}
  \ar [d] _-{\rotatebox{90}{=}}
&
C[u'] 
  \ar@{.>}[r] ^-{C[f_1]} 
& 
C[w]
  \ar@{.>}[d] ^-{C[e]}
\\
C[u]
  \ar[r] _-{g}
&
C[v']
  \ar@{.>}[r] _-{C[g_1]} 
&
C[w']
}}
\]
\end{enumerate}

Now, suppose that conditions $\mathbf{b_0)}$ holds and prove condition {\bf b)}. Let us consider a local branching $(f,e)$ of $S$ modulo $E$ of source $(u,v)$, with $f$ in $\So$ and $e$ in $E^\ell$ of length $1$. We still assume that any local branching of source $(u',v')$ such that there is a $3$-cell $(u,v) \fl (u',v')$ in $\Saux$ is confluent modulo $E$. The branching $(f,e)$ is either a local Peiffer branching modulo $E$, an additive branching modulo $E$ or an ovelapping modulo $E$. Let us prove that it is confluent modulo $E$ for each case.
\begin{enumerate}[{\bf i')}]
\item If $(f,e)$ is a local Peiffer branching modulo of the form
\[
\xymatrix @R=1.5em @C=2em {
u\star_i v + w
  \ar[r] ^-{f\star_i v} 
  \ar[d] _-{u\star_i e}
& 
u'\star_i v + w
\\
u\star_i v' + w &
}\] with $w$ in $R_2^\ell$, $f$ a positive $3$-cell in $\So$ and $e$ a $3$-cell in $E^\ell$ (the other form of such branching being treated similarly), there exist $3$-cells $f \star_i v'$ and $u' \star_i e$ in $S^\ell$ and $E^\ell$ respectively as in the following diagram
\[
\xymatrix @R=1.5em @C=2em {
u\star_i v + w 
  \ar[r] ^-{f\star_i v} 
  \ar[d] _-{u\star_i e}
& 
u'\star_i v + w \ar@{.>} [d] ^-{u' \star_i e}
\\
u\star_i v' + w \ar@{.>} [r] _-{f \star_i v'} & u' \star_i v' + w 
} \]
However, the dotted horizontal $3$-cell is not necessarily positive, for instance if $uv' \in \text{Supp}(w)$. By Lemma \ref{L:CellDecomposition}, there exist positive $3$-cells $f_1,f_2$ in $S^\ast$ of length at most $1$ such that $f \star_i v' = f_1 \star_2 f_2^-$. Then, we have $t_2^E(u' \star_i e) = s_2^S(f_2)$ and by assumption the branching $(f_2 , (u' \star_i e)^-)$ is confluent modulo $E$, so there exists positive $3$-cells $g$ and $h$ in $S^\ell$ and a $3$-cell $e'$ in $E^\ell$ as follows:
\[
\xymatrix @R=2em @C=2em {
u\star_i v 
  \ar[r] ^-{f\star_i v} 
  \ar[d] _-{u\star_i e}
& 
u'\star_i v \ar@{.>} [d] ^-{u' \star_i e} \ar@{.>} [rr] ^-{g} & & w \ar@{.>} [dd] ^-{e'} 
\\
u \star_i v' \ar@{.>} [r] _-{f \star_i v'} \ar[d] _-{\rotatebox{90}{=}} & u' \star_i v' \ar@{.>} [r] ^-{f_2} & u'' \ar@{.>} [d]^-{\rotatebox{90}{=}} & \\
 u \star_i v' \ar [rr] _-{f_1} & & u'' \ar@{.>} [r] _-{h} & w'  
} \]
which proves the confluence modulo of $(f,g)$.
\item If $(f,e)$ is a local additive branching modulo $E$ of the form 
\[
\xymatrix @R=1.5em @C=2em {
u + v 
  \ar[r] ^-{f + v} 
  \ar[d] _-{u + e}
& 
u' + v
\\
u + v' &
} \]
where $f$ is a positive $3$-cell in $\So$ and $e$ is a $3$-cell in $E^\ell$ of length $1$ (the other form of such branching being treated similarly), there exist $3$-cells $f+v'$ and $u'+e$ in $S^\ell$ and in $E^\ell$ respectively as in the following diagram
\[
\xymatrix @R=1.5em @C=2em {
u + v 
  \ar[r] ^-{f + v} 
  \ar[d] _-{u + e}
& 
u' + v \ar@{.>} [d] ^-{u'+e} 
\\
u + v' \ar@{.>} [r] _-{f+ v'} & u' + v' 
} \]
However, the $3$-cell $f + v'$ in $\So$ is not necessarily positive, for instance if $u \in \text{Supp}(v')$ but by Lemma \ref{L:CellDecomposition}, there exist positive $3$-cells $f_1$ and $f_2$ in $S^\ast$ of length at most $1$ such that $f + v' = f_1 \star_2 f_2^-$. We then prove the confluence modulo of the branching $(f,e)$ by a similar argument than above.
\item If $(f,e)$ is an overlapping modulo, the proof is similar to the proof for property $\mathbf{a_0)}$. 
\end{enumerate}
\end{proof}

\subsection{Confluence modulo by decreasingness}
\label{SS:ConfluenceByDecreasingness}
Van Oostrom introduced the notion of decreasingness in \cite{VOO94} in order to weaken the termination assumption needed to prove confluence from local confluence. In this section, we extend this construction to the context of rewriting modulo.

\subsubsection{Labelled linear~$(3,2)$-polygraphs}
A \emph{well-founded labelled linear~$(3,2)$-polygraph} is a data $(P,X,<, \psi)$ made of:
\begin{enumerate}[{\bf i)}]
	\item a linear~$(3,2)$-polygraph $P$;
	\item a set $X$;
	\item a well-founded order $<$ on $X$;
	\item a map $\psi$ which associates to each rewriting step $f$ of $P$ an element $\psi(f)$ of $X$ called the label of $f$.
\end{enumerate}

The map $\psi$ is called a \emph{well-founded labelling} of $P$. Given a rewriting sequence $f = f_1 \star_1 \ldots \star_1 f_k$, we denote by $L^X(f)$ the set $\{ \psi (f_1) ,\ldots , \psi (f_k) \}$.

\subsubsection{Multisets and multiset ordering}
\label{SSS:MultisetOrdering}
Recall that a \emph{multiset} is a collection in which elements are allowed to occur more than once or even infinitely often, contrary to an usual set. It is called \emph{finite} when every element appears a finite number of times. These multisets are equipped with three operations: union $\cup$, intersection $\cap$ and difference $-$. 

Given a well-founded set of labels $(X, <)$, we denote by $\multiinf{x}$ the multiset $\{ y \in X \; | \; y < x \}$ for any $x$ in $X$, and by $\multiinf{M}$ the multiset 
\[ \bigcup\limits_{x \in M} \multiinf{x} \] for any multiset $M$ over $X$.  The order $<$ extend to a partial order $\omult$ on the multisets over $X$ defined by $M \omult N$ if there exists multisets $M_1$, $M_2$ and $M_3$ such that
\begin{enumerate}[{\bf i)}]
	\item $M = M_1 \cup M_2$, $N = M_1 \cup M_3$ and $M_3$ is not empty,
	\item $M_2 \subseteq \multiinf{M_3}$, that is for every $x_2$ in $M_2$, there exists $x_3$ in $M_3$ such that $x_2 < x_3$.
\end{enumerate}

Following \cite{DM79}, if $<$ is well-founded, then so is $\omult$. Let us recall the following lemma from \cite[Lemma A.3.10]{VOO94} establishing the properties of the operations on multisets, that we will use to establish the proof of confluence by decreasingness:
\begin{lemma}
	\label{L:OperationsMultisets}
	For any multisets $M$, $N$ and $S$, the following properties hold:
	\begin{enumerate}[{\bf i)}]
		\item $\cup$ is commutative, associative and admits $\emptyset$ as unit element,
		\item $\cup$ is distributive over $\cap$,
		\begin{multicols}{2}
			\item $S \cap ( M \cup N) = (S \cap M ) \cup ( S \cap N ) $,
			\item $M \cap (N - S) = (M \cap N) - (M \cap S)$
			\item $(M \cap N) - S = (M  -S) \cap (N-S)$,
			\item $(S \cup M) - N = (S - N) \cup (M - N)$,
			\item $(M \cup N) - S = (M - S) \cup (N-S)$,
			\item $(M- N) - S = M - (N \cup S)$,
			\item $M = (M \cap N) \cup (M-N)$,
			\item $(M - N) \cap S = (M \cap S) - N$.
		\end{multicols}
	\end{enumerate}
\end{lemma}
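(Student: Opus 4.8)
The plan is to reduce the whole lemma to elementary identities in the ordered value monoid by passing to multiplicity functions. I would represent each multiset $M$ over $X$ by its multiplicity function $m_M \colon X \to \N \cup \{\infty\}$, the value $\infty$ recording that an element may occur infinitely often, and recall that the three operations act pointwise: $m_{M \cup N}(x) = \max(m_M(x), m_N(x))$, $m_{M \cap N}(x) = \min(m_M(x), m_N(x))$, and $m_{M - N}(x)$ is the multiset difference of the two multiplicities. Since two multisets coincide exactly when their multiplicity functions agree at every $x \in X$, each stated equality is equivalent to a single scalar identity in $\N \cup \{\infty\}$, obtained by evaluating both sides at an arbitrary fixed element $x$ and writing $a = m_M(x)$, $b = m_N(x)$, $c = m_S(x)$.

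First I would dispatch the purely lattice-theoretic statements. The commutativity, associativity and unitality of $\cup$ are immediate from the corresponding properties of $\max$, with $\emptyset$ corresponding to the constant multiplicity $0$. The distributivity law $S \cap ( M \cup N) = (S \cap M ) \cup ( S \cap N )$ reduces pointwise to $\min(c, \max(a,b)) = \max(\min(c,a), \min(c,b))$, which holds because $(\N \cup \{\infty\}, \max, \min)$ is a distributive lattice, being totally ordered; no case analysis beyond the lattice identities is needed here. The remaining work concerns the family of identities that combine the difference with meet and join, such as $M = (M \cap N) \cup (M - N)$ and $(M - N) \cap S = (M \cap S) - N$. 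For each I would fix $x$, split on the relative order of the three values $a$, $b$, $c$, and verify the resulting scalar equation region by region; since the value set is totally ordered there are only finitely many orderings to examine, and within each region the difference, $\min$ and $\max$ collapse to one of the three values or to $0$.

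The step I expect to be the main obstacle is precisely this interaction of $-$ with $\cap$ and $\cup$: the difference is not a lattice operation, so the corresponding identities are not consequences of distributivity and instead demand careful per-region bookkeeping, with attention both to the order of $a$, $b$ and $c$ and to the exact convention for the multiset difference fixed in \cite{VOO94}. The infinite multiplicities form the accompanying boundary case, to be handled separately whenever one of $a$, $b$, $c$ equals $\infty$ using the conventions $\min(\infty, t) = t$ and $\max(\infty, t) = \infty$. Once all the scalar identities are checked, the corresponding multiset equalities follow at once from the pointwise criterion, which completes the proof.
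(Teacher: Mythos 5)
You should first be aware that the paper contains no proof of this lemma at all: it is recalled verbatim, with an explicit citation, from \cite{VOO94} (Lemma A.3.10). So your proposal has to stand on its own, and it does not, because the reduction you make — identifying each multiset with its multiplicity function and interpreting $\cup$, $\cap$, $-$ pointwise as $\max$, $\min$ and truncated subtraction — falsifies several of the stated items. Concretely, for \textbf{iv)} take $M=\{x\}$, $N=\{x,x\}$, $S=\{x\}$: then $M\cap(N-S)=\{x\}\cap\{x\}=\{x\}$, while $(M\cap N)-(M\cap S)=\{x\}-\{x\}=\emptyset$. For \textbf{x)} take $M=\{x,x\}$, $N=S=\{x\}$: then $(M-N)\cap S=\{x\}$ but $(M\cap S)-N=\emptyset$. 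For \textbf{ix)} with $\max$-union take $M=\{x,x,x\}$, $N=\{x,x\}$: then $(M\cap N)\cup(M-N)=\{x,x\}\cup\{x\}=\{x,x\}\neq M$; and for \textbf{viii)} the same $M$ with $N=S=\{x\}$ gives $(M-N)-S=\{x\}$ but $M-(N\cup S)=\{x,x\}$. So the ``per-region bookkeeping'' you defer to cannot close these cases; carried out honestly, it produces counterexamples rather than proofs. Nor can this be repaired by changing the union convention alone: with additive union ($m_{M\cup N}=m_M+m_N$) items viii) and ix) become true, but iii), vi) and vii) then fail (at $a=b=c=1$, the two sides of iii) are $\min(1,1+1)=1$ and $\min(1,1)+\min(1,1)=2$), and iv), x) fail under either convention since they involve only $\cap$ and $-$.

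The missing idea is that these ten identities are not multiplicity-arithmetic facts: writing $M-N=M\cap N^{c}$, each item is exactly a Boolean-algebra identity, valid for \emph{sets}. They therefore hold simultaneously only under a set-like reading of the operations, e.g. identifying a multiset $M$ with its set of occurrences $\widehat{M}=\{(x,i)\in X\times\mathbb{N}\mid 1\le i\le m_M(x)\}$ and taking $\cup$, $\cap$, $-$ to be genuine set union, intersection and difference of occurrence sets — under which every item is a one-line Boolean computation — and this occurrence-level semantics is \emph{not} equivalent, on compound expressions, to any pointwise semantics on multiplicity functions (your computation of $M\cap(N-S)$ above is where the two readings visibly diverge). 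A correct proof must therefore either fix the operations in this set-theoretic way, consistently with \cite{VOO94}, and then verify the Boolean identities, or else state explicitly which pointwise operations are intended and weaken those items (such as iv) and x)) that are simply false for them; as written, your argument proves neither.
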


\subsubsection{Lexicographic maximum measure}
\label{SSS:LexMaxMeasure}
Let $(P,X,<,\psi)$ be a well-founded labelled linear~$(3,2)$-polygraph. Let $x = x_1 \dots x_n$ and $x' = x'_1 \dots x'_m$ be two $1$-cells in the free monoid $X^\ast$. We denote by $x^{(x')}$ the $1$-cell $\cl{x_1} \dots \cl{x_n}$ where each $\cl{w_i}$ is defined as 
\begin{itemize}
	\item[-] $1$ if $x_k < x'_j$ for some $1 \leq  \leq m$;
	\item[-] $x_k$ otherwise.
\end{itemize}

Following \cite{VOO94}, we consider the measure $| \cdot |$ from $X^\ast$ to the set of multisets over $X$ and defined as follows:
\begin{enumerate}[{\bf i)}]
	\item for any $x$ in $X$, the multiset $|x|$ is the singleton $\{ x \}$.
	\item for any $i$ in $X$ and any $1$-cell $x$ of $X^\ast$, $| i x| = |i | \cup | x^{(i)} |$.
\end{enumerate}
This measure is extended to the set of finite rewriting sequences of $P$ by setting for every rewriting sequence $f_1 \star_1 \ldots \star_1 f_n$:
\[ | f_1 \star_1 \ldots \star_1 f_n | = | k_1 \ldots k_n | \] where each $f_i$ is labelled by $k_i$ and $k_1 \ldots k_n$ is a product in the monoid $X^\ast$. Finally, the measure $| \cdot |$ is extended to the set of finite branchings $(f,g)$ of $P$ be setting$| (f,g) | = |f] \cup |g| $.

Recall from \cite[Lemma 3.2]{VOO94} that for any $1$-cells $x_1$ and $x_2$ in $X^\ast$, we have 
\[ |x_1 x_2 | = |x_1 | \cup |x_2^{(x_1)} | \]
and as a consequence, for any rewriting sequences $f$ and $g$ of $P$, the following 
relations hold:
\[ | f \star_2 g || =|f| \cup | k_1 \ldots k_m ^{(l_1 \ldots l_n)} | \] 
where $f= f_1 \star_1 \ldots \star_1 f_n$ (resp. $g= g_1 \star_1 \ldots \star_1 g_m$) and each $f_i$ (resp. $g_j$) is labelled by
$l_i$ (resp. $k_j$).

\subsubsection{Well-founded labelling modulo}
Given a linear~$(3,2)$-polygraph modulo $(R,E,S)$, a \emph{well-founded labelling modulo of $S$} is a well-founded labelling $\psi$ of $R$ extended to $\ERE$ by setting $\psi (e) = 1$ the trivial word in $X^\ast$ for any $e$ in $E$.
The lexicographic maximum measure defined in \ref{SSS:LexMaxMeasure} then extends to the rewriting steps of $S$ as follows:
\[ | e_1 \star_1 f \star_1 e_2 | = | f | \]
for any $3$-cells $e_1$ and $e_2$ in $E^\ell$ and rewriting step $f$ of $R$. It then extends to the rewriting sequences of $S$ and $\ERE$, and to the finite branchings $(f,e,g)$ of $S$ modulo $E$.

\subsubsection{Decreasingness modulo}
Following \cite[Definition 3.3]{VOO94}, we introduce a notion of decreasingness for a diagram of confluence modulo. Let $(R,E,S)$ be a linear~$(3,2)$-polygraph modulo equipped with a well-founded labelling modulo $(X,<,\psi)$ of $S$. A local branching $(f,g)$ (resp. $(f,e)$) of $S$ modulo $E$ is decreasing modulo $E$ if there exists confluence diagrams of the following form
\[
\raisebox{0.55cm}{
	\xymatrix @R=2em @C=2em {
		{}
		\ar[r] ^-{f}
		\ar[d] _-{\fleq}
		&
		{} 
		\ar@{.>}[r] ^-{f'} 
		& 
		{} \ar@{.>}[r] ^-{g''} 
		& {} \ar@{.>}[r] ^-{h_1} 
		& {}
		\ar@{.>}[d] ^-{e'}
		\\
		{}
		\ar [r] _-{g}
		&
		{}
		\ar@{.>}[r] _-{g'} 
		& {} \ar@{.>}[r] _-{f''} 
		& {} \ar@{.>}[r] _-{h_2}
		&
		{}
	}} , \qquad (resp. 
	\raisebox{0.55cm}{
	\xymatrix @R=2em @C=2em {
		{}
		\ar[r] ^-{f}
		\ar[d] _-{e}
		&
		{} 
		\ar@{.>}[r] ^-{f'} 
		& 
		 {} \ar@{.>}[r] ^-{h_1} 
		& {}
		\ar@{.>}[d] ^-{e'}
		\\
		{}
		\ar@{.>} [rrr] _-{h_2}
		&
		{}
		& {} 
		&
		{}
	}} )
	\]
	such that the following properties hold:
	\begin{enumerate}[{\bf i)}]
		\item $k < \psi(f)$ for all $k$ in $L^X(f')$.
		\item $k < \psi(g)$ for all $k$ in $L^X (g')$.
		\item $f''$ is an identity or a rewriting step labelled by $\psi(f)$.
		\item $g''$ is an identity or a rewriting step labelled by $\psi(g)$.
		\item $k < \psi(f)$ or $k < \psi(g)$ for all $k$ in $L^X(h_1) \cup L^X(h_2)$ (resp. $k \leq \psi(f)$ for any $k$ in $L^X(h_2)$ and $k' < \psi (f)$ for any $k'$ in $L^X(h_1)$).
	\end{enumerate}

\begin{remark}
Note that the definition of decreasingness for a local branching $(f,g)$ where $f$ and $g$ are positive $3$-cells in $\So$ is the same than Van Oostrom's definition. This definition is enlarged for a local branching $(f,e)$ where $f$ is a positive $3$-cell in $\So$ and $E$ is a $3$-cell in $E^\ell$ of length $1$ with the large inequality $k \leq \psi (f)$ in order to make sure that critical branchings of the form $(f,e)$ are decreasing with respect to the quasi-normal form labelling $\psiqnf$ defined in \ref{SSS:QuasiNormalFormLabelling} when rewriting with a linear~$(3,2)$-polygraph modulo $(R,E,S)$ such that $\ER \subseteq S$. Indeed, recall from \cite[Section 3.1]{DMpp18} that in this case these critical branchings are trivially confluent as follows:
\begin{equation*}
\label{E:JKCoherenceTrivialForER}
\xymatrix @R=2em @C=2em{
u 
  \ar[r] ^-{f} 
  \ar@{->} [d] _-{e}
& 
v \ar[d] ^-{\rotatebox{90}{=}}
\\
v'
  \ar[r] _-{e^- \cdot f} 
&
v
} 
\end{equation*}
In that case, $h_2 := e^- \cdot f$ has the same label than $f$ for $\psiqnf$, but we require that this confluence diagram is decreasing.
\end{remark}
	
	Such a diagram is called a decreasing confluence diagram of the branching modulo $(f,e,g)$. A linear~$(3,2)$-polygrah modulo $(R,E,S)$ is \emph{decreasing} is there exists a well-founded labelling $(X,<,\psi)$ of $R$ making all the local branchings $(f,e,g)$ of $S$ modulo $E$ decreasing. It was proven in \cite[Theorem 4.3.3]{AL16}, following the original proof by Van Oostrom for an abstract rewriting system \cite{VOO94}, that any decreasing left-monomial linear~$(3,2)$-polygraph $P$ is confluent. We adapt these proofs to establish the following result:
	
	\begin{theorem}
	\label{T:ConfluenceByDecreasingness}
		Let $(R,E,S)$ be a left-monomial linear~$(3,2)$-polygraph modulo. If $(R,E,S)$ is decreasing, then $S$ is confluent modulo $E$.
	\end{theorem}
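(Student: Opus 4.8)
The plan is to mimic Van Oostrom's abstract decreasingness argument \cite{VOO94}, as adapted to the linear setting in \cite[Theorem 4.3.3]{AL16}, but carried out on branchings modulo rather than ordinary branchings. First I would fix the well-founded labelling $(X,<,\psi)$ of $R$ witnessing decreasingness, extend it to a well-founded labelling modulo of $S$ by setting $\psi(e)=1$ on the cells of $E$, and equip the set of multisets over $X$ with the multiset order $\omult$, which is well-founded by \cite{DM79} since $<$ is. The lexicographic maximum measure $|\cdot|$ from \ref{SSS:LexMaxMeasure} assigns to each finite branching $(f,e,g)$ modulo $E$ a multiset $|(f,e,g)| = |f| \cup |g|$, and this multiset will serve as the induction parameter. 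The global strategy is to prove by well-founded induction on $|(f,e,g)|$ with respect to $\omult$ that every branching modulo $E$ is confluent modulo $E$; local confluence modulo then propagates to global confluence modulo by pasting decreasing diagrams together.

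The core of the argument is the diagram-pasting lemma: given two adjacent local decreasing confluence diagrams, their composite is again a branching modulo $E$ whose measure is strictly smaller in $\omult$ than the measure of the outer branching, so that the induction hypothesis applies to the residual branchings appearing at the corners. Here I would use the decreasingness data explicitly: the conditions that the labels in $L^X(f')$ and $L^X(g')$ are strictly below $\psi(f)$ and $\psi(g)$, that $f''$ and $g''$ are identities or single steps with label exactly $\psi(f)$ or $\psi(g)$, and that the labels of $h_1,h_2$ are bounded by $\max(\psi(f),\psi(g))$, are precisely what guarantee that the multisets of the sub-branchings one feeds into the induction lie below $\multiinf{\{\psi(f),\psi(g)\}}$. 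The bookkeeping is managed through the multiset identities collected in Lemma \ref{L:OperationsMultisets}, which let me compute the measures of composite rewriting sequences via the formula for $|f\star_2 g|$ and verify the decrease $M \omult N$ by exhibiting the splitting $M=M_1\cup M_2$, $N=M_1\cup M_3$ with $M_2 \subseteq \multiinf{M_3}$.

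Two features distinguish this from the non-modulo proof and will require extra care. First, a branching modulo $E$ has three legs $(f,e,g)$ with $e$ a genuine equivalence cell of $E^\ell$; since $\psi(e)=1$, the modulo cells contribute trivially to the measure, so the induction parameter only sees the $R$-steps, but I must check that the residual $E$-cells produced when closing a diagram are correctly reabsorbed into the confluence-modulo shape rather than spuriously inflating the measure. Second, the enlarged decreasingness condition for branchings of the form $(f,e)$ — the weak inequality $k \leq \psi(f)$ on $L^X(h_2)$ — must be handled separately, since $h_2$ may carry a label equal to $\psi(f)$ (as in the trivially confluent $\ER$-diagram $h_2 := e^- \cdot f$). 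I expect the main obstacle to be exactly this point: showing that allowing $h_2$ to retain a label equal to $\psi(f)$ does not break the strict multiset decrease needed for the induction. The resolution is that the strict decrease is required only for the branching as a whole, and the single leg $h_2$ of label $\psi(f)$ is compensated because the opposite leg of that sub-branching is strictly smaller, so that when the two legs are unioned into $|(f,e,g)|$ and compared against $\multiinf{\{\psi(f)\}}$ the net multiset comparison is still strict; verifying this compensation rigorously via Lemma \ref{L:OperationsMultisets} is the technical heart of the proof.
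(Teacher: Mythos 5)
Your proposal follows essentially the same route as the paper's proof: a well-founded induction on the lexicographic maximum measure with respect to $\omulteq$, organized around pasting lemmas (the paper's Lemmas \ref{L:PastingProperty1} and \ref{L:PastingProperty2}) and a preliminary treatment of local branchings modulo (Lemma \ref{L:PreliminaryDecreasingness}), with the modulo cells carrying trivial labels. In particular, the compensation mechanism you identify as the technical heart — the leg with label weakly bounded by $\psi(f)$ being offset by the strictly smaller opposite leg so that the composite sub-branching still drops strictly in $\omult$ — is exactly how the paper's induction goes through.
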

	
	Let us at first prove the following two lemmas:
	\begin{lemma}
	\label{L:PastingProperty1}
		Let $(R,E,S,X,<, \psi)$ be a decreasing labelled linear~$(3,2)$-polygraph modulo. For every diagram of the following form
		\[
		\raisebox{0.55cm}{
			\xymatrix @R=2em @C=2em {
				{} \ar@1 [r] ^-{f_1} \ar@1 [d] _-{\fleq} & {} \ar@1 [rr] ^-{f_2} \ar@1 [d] ^-{\fleq} & & {}  \\
				{} \ar@1[r] |-{f_1} \ar@1 [d] _-{e_1} & {} \ar@1 [r] |-{f'_1} & {} \ar@1[d] ^-{e'_1} & {} \\
				{} \ar@1 [r] _-{g_1} & {} \ar@1 [r] _-{g'_1} & {} & {}}}  \]
		such that the confluence modulo $(f_1 \star_2 f'_1, g_1 \star_2 g'_1)$ is decreasing, the inequality \[ | (f'_1, f_2) | \omulteq | (g_1 ,f_1 \star_2 f_2) | \] holds.
	\end{lemma}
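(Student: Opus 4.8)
The plan is to read off the label information supplied by the decreasing confluence diagram, rewrite each of the two measures by means of the compositional formula for the lexicographic maximum measure, and then exhibit an explicit splitting witnessing the multiset inequality $\omulteq$. First I would set $a := \psi(f_1)$ and $b := \psi(g_1)$, the labels of the two initiating rewriting steps of the diagram. Decomposing the top completion as $f'_1 = f' \star_2 g'' \star_2 h_1$ according to the shape of a decreasing confluence diagram from \ref{SS:ConfluenceByDecreasingness}, conditions \textbf{i)}, \textbf{iv)} and \textbf{v)} give $L^X(f') \subseteq \multiinf{a}$, that $g''$ is an identity or a single rewriting step labelled $b$, and $L^X(h_1) \subseteq \multiinf{a} \cup \multiinf{b}$. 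Consequently every label occurring in $f'_1$ lies in $\multiinf{a} \cup \multiinf{b} \cup \{b\}$; this is the only consequence of decreasingness that the argument uses, and the bound on $g'_1$ plays no role here.

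Next I would normalise both measures. Since $f_1$ is a single rewriting step, $|f_1| = \{a\}$ and $|g_1| = \{b\}$, and the composition relation of \ref{SSS:LexMaxMeasure} gives $|f_1 \star_2 f_2| = \{a\} \cup |f_2^{(f_1)}|$, where $f_2^{(f_1)}$ denotes $f_2$ with every label $< a$ erased. Hence the right-hand measure is $|(g_1, f_1 \star_2 f_2)| = \{a\} \cup \{b\} \cup |f_2^{(f_1)}|$, while the left-hand measure is $|(f'_1,f_2)| = |f'_1| \cup |f_2|$. The common sub-multiset to factor out is $M_1 := |f_2^{(f_1)}|$: it appears verbatim on the right, and it is contained in $|f_2|$ up to labels erased by the $f_1$-step. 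I would make this precise by establishing $|f_2| = M_1 \cup D$ with $D$ a multiset all of whose elements are $< a$, using the recursion $|i x| = |i| \cup |x^{(i)}|$ together with the distributivity and difference identities of Lemma \ref{L:OperationsMultisets}.

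Finally I would apply the definition of $\omult$. Write the left measure as $M_1 \cup M_2$ with $M_2 := |f'_1| \cup D$, and the right measure as $M_1 \cup M_3$ with $M_3 := \{a,b\}$. By the second paragraph $D \subseteq \multiinf{a}$, and by the first paragraph $|f'_1| \subseteq \multiinf{a} \cup \multiinf{b} \cup \{b\}$; thus every element of $M_2$ is either $< a$, $< b$, or equal to $b$. The elements $< a$ are dominated by $a \in M_3$ and those $< b$ by $b \in M_3$, which settles $M_2 \subseteq \multiinf{M_3}$ except for the single copy of $b$ possibly contributed by $g''$. That copy cannot be placed in $M_2$, since $\omult$ requires \emph{strict} domination; instead I would move it into the common part $M_1$, matching it against the copy of $b$ coming from $|g_1|$ on the right. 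After this adjustment $M_2 \subseteq \multiinf{M_3}$ holds, yielding $|(f'_1,f_2)| \omulteq |(g_1, f_1 \star_2 f_2)|$. The main obstacle is the bookkeeping of the second paragraph: the lexicographic maximum measure already erases labels internally, so relating the standalone measure $|f_2|$ to the erased measure $|f_2^{(f_1)}|$ and certifying that the discrepancy $D$ consists only of labels $< a$ requires a careful induction on the length of $f_2$ via the recursion for $|\cdot|$, rather than a one-line multiset manipulation; in particular one must check that no label surviving the erasure $f_2^{(f_1)}$ fails to survive in $|f_2|$, which is exactly where the inequality $a \leq x$ forces any earlier dominating label to survive both computations.
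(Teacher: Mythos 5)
Your reduction of both sides via $|f_1|=\{a\}$, $|g_1|=\{b\}$, the composition formula, and the identity $|f_2| = |f_2^{(f_1)}| \cup D$ with $D \subseteq \multiinf{a}$ is sound (and you rightly flag that the last identity needs an induction). The gap is in the final step. When $g''$ is nontrivial you move the copy of $b=\psi(g_1)$ from $M_2$ into the common part; since the right-hand measure $\{a\}\cup\{b\}\cup|f_2^{(f_1)}|$ contains only one copy of $b$, the witness $M_3$ must then shrink from $\{a,b\}$ to $\{a\}$. But all you have established about the rest of $M_2$ is that its elements are $<a$ \emph{or} $<b$: condition \textbf{v)} permits $h_1$ to carry labels that are $<b$ and not $<a$, and your bound $|f'_1|\subseteq \multiinf{a}\cup\multiinf{b}\cup\{b\}$ does not remove them from the measure. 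Such an element is not dominated by anything in $M_3=\{a\}$, so the asserted inclusion $M_2\subseteq\multiinf{M_3}$ does not follow; indeed, for pairwise distinct labels with $c<b$, $c\not<a$, $b\not<a$, one has $\{b,c\}\not\omulteq\{a,b\}$. The hole can be closed, but only by using more than what you call ``the only consequence of decreasingness that the argument uses'': since the step $g''$ labelled $b$ \emph{precedes} $h_1$ in $f'_1=f'\star_2 g''\star_2 h_1$, the recursion $|ix|=|i|\cup|x^{(i)}|$ makes the surviving copy of $b$ erase every later label $<b$ (and if that copy of $b$ is itself erased, it is erased by a surviving label $<a$, which by transitivity also erases everything $<b$). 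This yields the sharper bound $|f'_1|\subseteq\{b\}\cup\multiinf{a}$, which is exactly what your splitting needs.

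This is also where your route diverges from the paper's. As the ``i.e.''\ in Lemma \ref{L:PastingProperty2} makes explicit, the hypothesis ``the confluence $(f_1\star_2 f'_1,g_1\star_2 g'_1)$ is decreasing'' is read there as the measure inequalities $|f_1\star_2 f'_1|\omulteq|(f_1,e_1,g_1)|$ and $|g_1\star_2 g'_1|\omulteq|(f_1,e_1,g_1)|$, not as the existence of a diagram of the prescribed shape (which, for a branching $(f_1,e_1,g_1)$ with $e_1$ nontrivial, is not even literally one of the two shapes defined in \ref{SS:ConfluenceByDecreasingness}). With that reading the proof is a short computation at the level of measures: by Lemma \ref{L:OperationsMultisets} \textbf{ix)}, $|(f'_1,f_2)| = \bigl(|(f'_1,f_2)|\cap\multiinf{|f_1|}\bigr)\cup\bigl(|(f'_1,f_2)|-\multiinf{|f_1|}\bigr)$; the first part is majorised by $|f_1|$ since $\multiinf{|f_1|}\omult|f_1|$, and the second equals $|((f'_1)^{(f_1)},f_2^{(f_1)})|$, giving $|(f'_1,f_2)|\omulteq|f_1\star_2 f'_1|\cup|f_2^{(f_1)}|$; one then applies the decreasingness inequality and compatibility of $\omulteq$ with $\cup$ to reach $|(g_1,f_1\star_2 f_2)|$. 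Working with the measures as black boxes in this way avoids precisely the case analysis on $g''$ and $h_1$ where your argument breaks down; if you prefer to argue from the diagram shape, you must first prove the refined bound $|f'_1|\subseteq\{b\}\cup\multiinf{a}$ indicated above.
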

	
	\begin{proof}
	By Lemma \ref{L:OperationsMultisets} $\mathbf{ix)}$, we get the following inequality:
		\[ | (f'_1 ,f_2) |  = |(f'_1,f_2)| \cap \multiinf{ |f_1| } \cup |(f'_1, f_2) | - \multiinf{ |f_1|}. \]
		Since $\multiinf{ |f_1| }  \omult |f_1|$, we get that 
\[ | (f'_1 ,f_2) | \omult |f_1| \cup |((f'_1)^{(f_1)}, f_2^{(f_1)})| = | f_1 \star_2 f'_1 | \cup |f_2^{(f_1)} |. \]
Finally, we get from the decreasingness assumption that 
\[ | f_1 \star_2 f'_1 | \cup |f_2^{(f_1)} | \omulteq |(f_1,e_1,g_1)| | \cup |f_2^{(f_1)} | = |(g_1, f_1 \star_2 f_2) |. \]
	\end{proof}

	\begin{lemma}
	\label{L:PastingProperty2}
		Let $(R,E,S,X,<, \psi)$ be a decreasing labelled linear~$(3,2)$-polygraph modulo. For every diagram of the following form
		\[ 
		\raisebox{0.55cm}{
			\xymatrix @R=2em @C=2em {
				{} \ar@1 [r] ^-{f_1} \ar@1 [d] _-{\fleq} & {} \ar@1 [rr] ^-{f_2} \ar@1 [d] ^-{\fleq} & & {} \ar@1 [r] ^-{h} & {} \ar@1 [dd]^{e_2} \\
				{} \ar@1[r] |-{f_1} \ar@1 [d] _-{e_1} & {} \ar@1 [r] |-{f'_1} & {} \ar@1[d] ^-{e'_1} & {} & & \\
				{} \ar@1 [r] _-{g_1} & {} \ar@1 [r] _-{g'_1} & {} \ar@1 [rr] _-{g_2} & & {}
			}}  \]
			such that the confluence $(f'_1,e'_1,g'_1)$ and $(f_2 \star_2 h, e_2, g_2)$ are decreasing, i.e. the following inequalities hold:
			\begin{enumerate}[{\bf a)}]
				\item $| g_1 \star_2 g'_1 | \omulteq |(f_1,e_1,g_1) |$ and $|f_1 \star_2 f'_1 | \omulteq |(f_1,e_1,g_1)|$,
				\item $|f'_1 \star_2 e'_1 \star_2 g_2 | \omulteq |(f'_1,f_2) |$ and $|f_2 \star_2 h | \omulteq |(f'_1 , f_2) |$
			\end{enumerate}
			Then the following inequalities hold:
			\[ | g_1 \star_2 g'_1 \star_2 g_2 | \omulteq | (f_1 \star_2 f_2, e_1 , g_1 )| \qquad \text{and} \qquad | f_1\star_2 f_2 \star_2h | \omulteq | (f_1 \star_2 f_2, e_1 , g_1 )| \]
		\end{lemma}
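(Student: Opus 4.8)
The plan is to obtain both inequalities by pasting the two given local decreasing diagrams, using transitivity of $\omulteq$ together with the composition formula for the lexicographic maximum measure from~\ref{SSS:LexMaxMeasure} and the identities of Lemma~\ref{L:OperationsMultisets}. First I would record two simplifications coming from the modulo setting: since the $3$-cells $e_1$ and $e'_1$ lie in $E^\ell$, they carry the trivial label, so that $|f'_1 \star_2 e'_1 \star_2 g_2| = |f'_1 \star_2 g_2|$ and $|(f_1 \star_2 f_2, e_1, g_1)| = |f_1 \star_2 f_2| \cup |g_1|$; I write $T := |(f_1 \star_2 f_2, e_1, g_1)|$ for this target multiset. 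Then I would apply Lemma~\ref{L:PastingProperty1} to the left square of the diagram, whose completing confluence $(f'_1, e'_1, g'_1)$ of $(f_1, e_1, g_1)$ is decreasing by hypothesis~\textbf{a)}. This yields $|(f'_1, f_2)| \omulteq |(g_1, f_1 \star_2 f_2)|$, and since $|(g_1, f_1 \star_2 f_2)| = |g_1| \cup |f_1 \star_2 f_2| = T$, transitivity of $\omulteq$ with the two bounds of hypothesis~\textbf{b)} gives the tail estimates
\[ |f_2 \star_2 h| \omulteq T \qquad \text{and} \qquad |f'_1 \star_2 g_2| \omulteq T, \]
from which both conclusions are meant to follow.

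Next I would expand the two composite measures by the formula $|u \star_2 v| = |u| \cup |v^{(u)}|$, obtaining
\[ |f_1 \star_2 f_2 \star_2 h| = |f_1 \star_2 f_2| \cup |h^{(f_1 \star_2 f_2)}| \qquad \text{and} \qquad |g_1 \star_2 g'_1 \star_2 g_2| = |g_1 \star_2 g'_1| \cup |g_2^{(g_1 \star_2 g'_1)}|. \]
For the second member I would also invoke hypothesis~\textbf{a)} in the form $|g_1 \star_2 g'_1| \omulteq |(f_1, e_1, g_1)| \subseteq T$. Because filtering against a longer label word replaces at least as many labels by the trivial one, the surviving part of each tail shrinks, so that $|h^{(f_1 \star_2 f_2)}|$ and $|g_2^{(g_1 \star_2 g'_1)}|$ are each controlled by the already-established tail estimates $|f_2 \star_2 h| \omulteq T$ and $|f'_1 \star_2 g_2| \omulteq T$, together with the inclusion $A \subseteq B \Rightarrow A \omulteq B$ and transitivity. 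The reduction is thus to showing that the union of the common prefix with its dominated filtered tail stays $\omulteq T$.

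The main obstacle is exactly this last step, since $\omulteq$ is \emph{not} preserved under arbitrary unions: from $A \omulteq T$ and $B \omulteq T$ one cannot conclude $A \cup B \omulteq T$. To circumvent this I would exploit that each target already shares a \emph{common prefix} with $T$, namely $|f_1 \star_2 f_2|$ in the first case and $|g_1 \star_2 g'_1|$ in the second. Writing $T = |f_1 \star_2 f_2| \cup |g_1|$ (respectively isolating the prefix in $T$), I would match the decomposition $M = M_1 \cup M_2$, $N = M_1 \cup M_3$ demanded by the definition of $\omult$, taking $M_1$ to be this common prefix and extracting, by the difference and intersection identities of Lemma~\ref{L:OperationsMultisets} used as in the proof of Lemma~\ref{L:PastingProperty1}, the excess of the filtered tail as a sub-multiset of $\multiinf{M_3}$. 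The delicate point, and the reason the lexicographic maximum measure is designed this way, is that the filtering operation $(\cdot)$ precisely guarantees that every label of $h^{(f_1 \star_2 f_2)}$ (resp. of $g_2^{(g_1 \star_2 g'_1)}$) surviving strictly above the prefix is bounded by a label of $g_1$ (resp. of the corresponding leg of $T$), which is exactly the inclusion $M_2 \subseteq \multiinf{M_3}$ needed to conclude $\omulteq T$. Verifying these precise inclusions between the differently filtered tails is where the bulk of the bookkeeping lies; carrying it out for both tails and combining with the tail estimates above yields the two stated inequalities.
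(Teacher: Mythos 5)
Your skeleton is sound in several respects: the prefix/tail expansion $|f_1\star_2 f_2\star_2 h| = |f_1\star_2 f_2| \cup |h^{(f_1f_2)}|$, the appeal to Lemma \ref{L:PastingProperty1} to get $|(f'_1,f_2)| \omulteq |(g_1,f_1\star_2 f_2)|$, and above all the correct observation that $\omulteq$ is not stable under union, are all on target. The gap is that the ingredients you then reduce to cannot entail the conclusion. Writing $T = |(f_1\star_2 f_2,e_1,g_1)| = |f_1\star_2 f_2|\cup|g_1|$, your plan is to deduce both inequalities from the aggregated tail estimates $|f_2\star_2 h| \omulteq T$ and $|f'_1\star_2 g_2| \omulteq T$ together with filtering properties and a common-prefix decomposition. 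But these aggregated estimates retain strictly less information than hypotheses \textbf{a)} and \textbf{b)} taken separately. Concretely, take $X=\N$ and let $f_1,f_2,h,g_1$ be single steps labelled $5,3,5,3$. Then $|f_1\star_2 f_2| = \{5\}$, $T=\{5,3\}$, $|f_2\star_2 h| = \{3,5\} \omulteq T$, $|h^{(f_1f_2)}| = \{5\} \subseteq |h^{(f_2)}|$, and $|f_1\star_2 f_2|$ is a common prefix of the goal and of $T$; every intermediate fact your argument uses holds, yet $|f_1\star_2 f_2\star_2 h| = \{5,5\} \not\omulteq \{5,3\} = T$. (In this configuration \textbf{a)} and \textbf{b)} cannot hold simultaneously, so the lemma is not contradicted; but your proof never returns to \textbf{a)} and \textbf{b)} individually after aggregating them through $T$, so it cannot detect this.) Equivalently: by cancellation of the multiset order the second conclusion is exactly $|h^{(f_1f_2)}| \omulteq |g_1|$, whereas your estimates only yield $|h^{(f_1f_2)}| \omulteq |f_1|\cup|g_1|$, and the extra copy of $|f_1|$ cannot be removed, since multiset union counts multiplicities.

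The "delicate point" you defer is also misattributed. The filtering $(\cdot)^{(f_1f_2)}$ only guarantees that a surviving label of $h$ is not strictly below any label of $f_1f_2$; it gives no bound of such a label by a label of $g_1$ (in the example above the surviving label $5$ exceeds everything in $|g_1|$). That domination must be manufactured from the hypotheses, and this is what the paper's proof does in three moves you would need to restore: first, extract componentwise bounds from \textbf{a)} and \textbf{b)} by cancellation, namely $|h^{(f_2)}| \omulteq |f'_1|$, $|g_2^{(f'_1)}| \omulteq |f_2|$ and $|(f'_1)^{(f_1)}| \omulteq |g_1|$; second, transport the first bound along the filter $(\cdot)^{(f_1)}$, using the fact that filtering is monotone with respect to $\omulteq$ (the Claim in Van Oostrom's Lemma 3.5, which the paper cites); third, recombine with the prefix by union-compatibility, and, for the inequality on $|g_1\star_2 g'_1\star_2 g_2|$, additionally split the tail $|g_2^{(g_1g'_1)}|$ into its part inside $\multiinf{|f_1|}$ (absorbed by the occurrence of $|f_1|$ inside $T$) and the remainder. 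Your transitivity-through-$T$ shortcut skips the first of these moves, and that is precisely the information that cannot be recovered afterwards.
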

		
\begin{proof}
To shorten the notations in this proof, we will denote the $2$-cell $f \star_2 g$ by simply $fg$. For the second inequality, we get that
	\begin{align*}
		| f_1 f_2 h | & = | f_1 f_2 | \cup |h ^{(f_1 f_2)} | = | f_1 f_2 | \cup |h ^{(f_1) (f_2)} | \\
		& \omulteq | f_1 f_2 | \cup |(f'_1) ^{( f_1)} |  \\
		\end{align*}
	since $|h ^{(f_2)} | \omulteq |f'_1|$ and $|f_1 f'_1| \omulteq |f_1 | \cup |g_1|$ respectively by properties $\mathbf{b)}$ and $\mathbf{a)}$. For the first inequality, we have by Lemma \ref{L:OperationsMultisets} $\mathbf{ix)}$ that 
\[ 	|g_1 g'_1 g_2 |   = |g_1 g'_1 | \cup |g_2^{(g_1 g'_1)} | = | g_1 g'_1 | \cup \left[ \left( |g_2^{(g_1 g'_1)} | \cap \multiinf{f_1} \right) \cup \left( |g_2^{(g_1 g'_1)} | - \multiinf{f_1} \right) \right]. \]
We deduce from \cite[Claim in Lemma 3.5]{VOO94} the following two inequalities, that we do not detail here:
\[ 	|g_1 g'_1 g_2 |  \omulteq |g_1| \cup |f_1| \cup |g_2 ^{(g_1 g'_1) (f_1)} | \omulteq |g_1 | \cup |f_1 | \cup | g_2^{(f'_1)(f_1)} |. \]
Since $|g_2^{(f'_1)} | \omulteq |f_2|$ by $\mathbf{b)}$, we finally get that 
\[ 	|g_1 g'_1 g_2 | \omulteq |g_1 | \cup |f_1| \cup |f_2 ^{(f_1)} |  = |g_1 | \cup |f_1 f_2 | = |(f_1 f_2, e_1, g_1)|. \]	
\end{proof}

Before proving Theorem \ref{T:ConfluenceByDecreasingness}, let us also establish the following preliminary lemma:
\begin{lemma}
\label{L:PreliminaryDecreasingness}
Let $(R,E,S,X,<,\psi)$ be a decreasing labelled linear~$(3,2)$-polygraph modulo. For any branching $(f,e,g)$ of $S$ modulo $E$ with $f$ and $g$ positive $3$-cells in $\So$ and $e$ a $3$-cell in $E^\ell$ of length $1$, there exist a confluence $(f',e',g')$ of this branching such that
\[ | f \star_2 f' | \omulteq |(f,e,g)| \quad \text{and} \quad |g \star_2 g' | \omulteq |(f,e,g)| \]
\end{lemma}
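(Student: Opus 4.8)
The plan is to argue by Noetherian induction on the measure $|(f,e,g)|$, ordered by the multiset order $\omult$; this is legitimate since $\omult$ is well-founded whenever $<$ is, following \cite{DM79}. The base cases are the degenerate ones where $f$ or $g$ is an identity $3$-cell. If both are trivial, the triple $(f,e,g)$ is its own confluence, closed by $e'=e$ and identities $f',g'$, and the bounds hold since $|f\star_2 f'| = \emptyset$. If $f$ is trivial and $g$ is not, I would close the branching by $f' := e \star_2 g$, which is a positive $3$-cell of $S^\ell$ because the rewriting steps of $S$ absorb cells of $E^\ell$ on the left by the inclusion $R \subseteq S \subseteq \ERE$, together with identities for $g'$ and $e'$; since every cell of $E$ carries the trivial label, $|f\star_2 f'| = |g| = |(f,e,g)|$, and the symmetric situation is handled dually.

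For the inductive step I would assume $f$ non-trivial and decompose it into its first rewriting step and remainder, $f = f_1 \star_2 \hat{f}$ with $f_1$ a positive $3$-cell of $\So$ of length $1$. The pair $(f_1,e)$ is then a local branching modulo $E$, so the decreasingness hypothesis supplies a decreasing confluence diagram completing $(f_1,e)$ by positive $3$-cells $f'_1$, $h_1$ of $S^\ell$ and a cell $h_2$ issued from $v$, closed by a cell $e_1$ of $E^\ell$, and satisfying $k < \psi(f_1)$ for every $k$ in $L^X(f'_1) \cup L^X(h_1)$ and $k \leq \psi(f_1)$ for every $k$ in $L^X(h_2)$; the large inequality on $h_2$ is exactly what the remark after the definition of decreasingness provides, and is needed because the cell $e^-\cdot f_1$ may retain the label $\psi(f_1)$. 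I would then feed the branching $(\hat f,\, f'_1 \star_2 h_1)$ at the common source $u_1$, which involves no $E$-cell and has strictly smaller measure by a Lemma \ref{L:PastingProperty1} estimate, into the induction hypothesis, and likewise combine the resulting cells with $g$ and $h_2$ through further applications of the induction hypothesis to branchings of strictly smaller measure.

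Finally I would paste all these local and inductive confluence diagrams together, reading off the global confluence $(f',e',g')$ of $(f,e,g)$, and derive the two measure inequalities $|f\star_2 f'| \omulteq |(f,e,g)|$ and $|g\star_2 g'| \omulteq |(f,e,g)|$ by composing the bounds produced at each stage with the pasting estimates of Lemmas \ref{L:PastingProperty1} and \ref{L:PastingProperty2}. The main obstacle is precisely this bookkeeping of measures: one must check that every sub-branching to which the induction hypothesis is applied is strictly smaller for $\omult$, so that the induction is well-founded, and then verify that concatenating the $\omulteq$ bounds by means of Lemma \ref{L:OperationsMultisets} never pushes the measure past $|(f,e,g)|$. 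The subtlety specific to the modulo setting is the cell $e$ of $E^\ell$ sitting between the sources of $f$ and $g$: its trivial label forces the use of the large inequality $k \leq \psi(f)$, and care is required so that the $E$-steps threaded through the diagram never contribute to the measure while still closing the diagram only up to $E^\ell$ rather than on the nose.
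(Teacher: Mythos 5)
Your overall strategy (Noetherian induction on $|(f,e,g)|$ for $\omult$, decreasingness applied across the cell $e$, then pasting via Lemmas \ref{L:PastingProperty1} and \ref{L:PastingProperty2}) is the right flavour and matches the paper in spirit, but your plan fails at exactly the point you flag as "the main obstacle". Note first that the lemma's hypotheses put $f$ and $g$ in $\So$, so both already have length $1$; your decomposition $f = f_1 \star_2 \hat f$ and your base cases with trivial $f$ or $g$ mean you are really proving a more general statement, which is legitimate only if every appeal to the induction hypothesis is to a \emph{strictly} smaller branching. This is where the argument breaks. After applying decreasingness to $(f_1,e)$ you obtain a positive cell $h_2$ issuing from $v = t_2(e)$ whose labels satisfy only the \emph{large} inequality $k \leq \psi(f_1)$ --- unavoidably so, as your own remark about $e^- \cdot f_1$ shows. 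Hence the next branching you must resolve, the one between $h_2$ and $g$ at the common source $v$, need not be strictly smaller: if $f$ has length $1$ and $h_2$ is a single step labelled $\psi(f)$, then $|(h_2,g)| = \{\psi(f)\} \cup |g| = |(f,e,g)|$, and the induction hypothesis cannot be invoked, contrary to your claim that all remaining sub-branchings have strictly smaller measure. The paper's proof never applies induction there: it decomposes $h_2 = h_2^1 \star_2 h_2^2$ with $h_2^1 \in \So$ and applies the \emph{decreasingness hypothesis} (available for free, since $(h_2^1,g)$ is a genuine local branching of $S$ at $v$), and strictness is then recovered on the $g$-side, because the continuation $k_1$ of $g$ produced by that diagram satisfies $|k_1| \omult |g|$; only the resulting branching $(h_2^2,k_1)$, which is strictly smaller, is fed to the induction hypothesis.

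A second gap is that closing the whole diagram is not a one-shot pasting: both the decreasingness diagrams and the inductive confluences introduce new cells of $E^\ell$ against which further branchings must be resolved, giving an a priori unbounded iteration. The paper has to argue separately that this iteration stops, by exhibiting the two interleaved sequences of measures $|f| \omulteq |h_2| \omult |h_3| \omulteq \cdots$ and $|g| \omult |k_1| \omult |k_2| \cdots$ and invoking well-foundedness of $\omult$; your proposal does not address this termination at all. Finally, a smaller point: your degenerate case uses $f' := e \star_2 g$ on the grounds that $S$ "absorbs cells of $E^\ell$ on the left", which holds for $\ER$ and $\ERE$ but not for an arbitrary $S$ with $R \subseteq S \subseteq \ERE$ (take $S = R$); that case, were it needed, should instead be handled by decreasingness of the local branching $(g,e^-)$ modulo $E$.
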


\begin{proof}
Let us denote by $(X,<, \psi)$ the well-founded labelling on $S$ making it decreasing. We consider such a branching $(f,e,g)$ of $S$ modulo $E$, and we prove this result by well-founded induction, assuming  that it is true for any branching $(f'' , e'' , g'' )$ of $S$ modulo $E$ such that $ |(f'',e'',g'') | \omult |(f,e,g)|$.

The local branching $(f,e)$ of $S$ modulo $E$ being decreasing by assumption, there exist positive $3$-cells $f',f'_1$ and $h_2$ in $S^\ell$ such that $k \leq \psi((f)$ for any $k$ in $L^X(h_2)$. Let us fix a decomposition $h_2 = h_2^1 \star_2 h_2^2$ where $h_2$ is a positive $3$-cell in $\So$. Then $(h_1^1,g_1)$ is a local branching of $S$ modulo $E$ and by decreasingness, there exist a decreasing confluence of this local branching, as depicted in the following diagram:
\[
		\raisebox{0.55cm}{
			\xymatrix @R=2em @C=2em {
\ar@1 [r] ^-{f_1} \ar [d] _-{e_1} & \ar [r] ^-{f'} & \ar [r] ^-{f'_1} & \ar [d] ^-{e'_1} \\
\ar [d] _-{\fleq} \ar [r] |-{h_2^1} & \ar [d] ^-{\fleq} \ar [rr] ^-{h_2^2} & &  \\
\ar [d] _-{\fleq} \ar [r] |-{h_2^1} & \ar [rr] ^-{k_1} & & \ar [d] ^-{e_2} \\
\ar [r] _-{g_1} & \ar [rr] _{g'1} & &  
 }} \]
By decreasingness of $(f,e)$, we have that $|h_2^2 | \omulteq |f_1|$ and by decreasingness of $(h_2^1,g)$, we have that $|k_1| \omult [g_1|$ so that $|(f,e,g)| \omult |(h_2^2,k_1)|$ and by induction, this branching admits a confluence $(h_3,e_3,k_2)$ satisfying 
\[ |h_2^2 \star_2 h_3 | \omulteq |(h_2^2,k_1)| \quad \text{and} \quad |k_1 \star_3 k_2| \omulteq |(h_2^2,k_1)| \]
We can now repeat the same process on the branchings $((e'_1)^-, h_3)$ and $(e_2,k_2)$ to obtain a confluence modulo of these branchings as follows:
   \[
		\raisebox{0.55cm}{
			\xymatrix @R=2em @C=2em {
\ar@1 [r] ^-{f_1} \ar [d] _-{e_1} & \ar [r] ^-{f'} & \ar [r] ^-{f'_1} & \ar [d] ^-{e'_1} \ar [rr] ^-{f_2} & & \ar [d] ^-{e''_1} \\
\ar [d] _-{\fleq} \ar [r] |-{h_2^1} & \ar [d] ^-{\fleq} \ar [rr] ^-{h_2^2} & & \ar [r] |-{h_3} & \ar [r] |-{h_4} \ar [d] ^-{e_3} &  \\
\ar [d] _-{\fleq} \ar [r] |-{h_2^1} & \ar [rr] ^-{k_1} & & \ar [d] ^-{e_2} \ar [r] |-{k_2} & \ar [r] |-{k_3} & \ar [d] ^-{e'_2} \\
\ar [r] _-{g_1} & \ar [rr] _{g'1} & & \ar [rr] _-{g_2} & &  
 }} \]
One can repeat this process, however it terminates in finitely many steps, otherwise this would lead to infinite sequences $(h_n)_{n \in \N}$ and $(k_n)_{n \in \N}$ satsifying 
\[ |f| \omulteq |h_2| \omult |h_3| \omulteq |h_4| \omult |h_5| \ldots , \qquad  |g| \omult |k_1| \omult |k_2| \ldots \]
yielding two infinite strictly decreasing sequences for $\omult$, which is impossible since by assumption, $<$ is well-founded and then so is $\omult$ as explained in section \ref{SSS:MultisetOrdering}.
\end{proof}

Let us now prove Theorem \ref{T:ConfluenceByDecreasingness}:
\begin{proof}
Let us denote by $(X,<, \psi)$ the well-founded labelling on $S$ making it decreasing. We consider a branching $(f,e,g)$ of $S$ modulo $E$ such that $f$ and $g$ are positive $3$-cells of $S^\ell$. We prove by well-founded induction on the labels that $(f,e,g)$ can be completed into a confluence modulo diagram with positive $3$-cells $f'$, $g'$ in $S^\ell$ and a $3$-cell $e'$ in $E^\ell$ such that
\begin{equation}
\label{E:BranchingsDecreasing}
 | f \star_2 f' | \omulteq |(f,e,g) |, \quad \text{and} \quad |g \star_2 g' | \omulteq |(f,e,g)| 
 \end{equation}
We assume that for any branching $(f'' , e'' , g'' )$ of $S$ modulo $E$ such that $ |(f'',e'',g'') | \omult |(f,e,g)|$, there exists a decreasing confluence modulo of the branching $(f'',e'',g'')$.
Let us choose decompositions $f=f_1 \star_2f_2$ and $g= g_1 \star_2 g_2$ where $f_1$, $g_1$ belong to $\So$ and $f_2$ and $g_2$ are in $S^\ell$. By Lemma \ref{L:PreliminaryDecreasingness}, the branching $(f_1,e,g_1)$ admits a confluence modulo $(f'_1,e_1,g'_1)$ satsifying the conditions of (\ref{E:BranchingsDecreasing}), as depicted on the following diagram:
\[
		\raisebox{0.55cm}{
			\xymatrix @R=2em @C=2em {
				{} \ar@1 [rrr] ^-{f} \ar@1 [d] _-{\fleq} & {} & {} & {}  \\
				{} \ar@1[r] |-{f_1} \ar@1 [d] _-{e_1} & {} \ar@1 [r] |-{f'_1} & {} \ar@1[d] ^-{e'_1} & {} \\
				{} \ar@1 [r] _-{g_1} \ar@1 [d]_-{\fleq} & {} \ar@1 [r] _-{g'_1} & {} & {} \\
	{} \ar@1 [rrr] _-{g} & & &{} }} \]
Using Lemma \ref{L:PastingProperty1}, we get that $|f_2| \cup |f'_1 | \omult |(f,e,g)|$ and $|g_2| \cup |g'_1| \omult |(f,e,g)|$ so that by induction on the branchings $(f_2,f'_1)$ and $(g'_1,g_2)$, there exist positive $3$-cells $f_3,f'_2,g_3,g'_2$ in $S^\ell$ satisfying the conditions of (\ref{E:BranchingsDecreasing}) and $3$-cells $e_2$, $e'_2$ in $E^\ell$ as in the following diagram:
\[
		\raisebox{0.55cm}{
			\xymatrix @R=2em @C=2em {
				{} \ar@1 [r] ^-{f_1} \ar@1 [d] _-{\fleq} & {} \ar@1 [d] ^-{\fleq} \ar@1[rr] ^-{f_2} & {} & {} \ar@1[r] ^-{f_3} & {} \ar@1[d] ^-{e_2} \\
				{} \ar@1[r] |-{f_1} \ar@1 [d] _-{e_1} & {} \ar@1 [r] |-{f'_1} & {} \ar@1[d] ^-{e'_1} \ar@1[rr] |-{f'_2} & {} & {}  \\
				{} \ar@1 [r] _-{g_1} \ar@1 [d]_-{\fleq} & {} \ar@1 [r] _-{g'_1} \ar@1 [d] _-{\fleq} & {} \ar@1 [rr] |-{g'_2} & {} & {}   \ar@1[d] ^-{e'_2} \\
	{} \ar@1 [r] _-{g_1} & \ar@1[rr] _-{g_2} & &{} \ar@1 [r] _-{g_3} & {} }} \]
Now, either there is a $2$-cell $e''': t_2(e_2) \fl s_2(e'_2)$  in $E^\ell$, and the confluence diagram obtained satisfy the conditions of (\ref{E:BranchingsDecreasing}) using Lemma \ref{L:PastingProperty2} on the top part of the diagram and decreasingness of the confluence modulo $(g'_2,e'_2,g_3)$. Otherwise, the branching $(f'_2, e'_1,g'_2)$ is a branching of $S$ modulo $E$ whose label is strictly smaller than $|(f,e,g)|$ with respect to $\omult$ by construction. Applying induction on this branching, there exists a confluence modulo $(f'_3,e_3,g'_3)$ of this branching satisfying the conditions of (\ref{E:BranchingsDecreasing}). Then, we may still apply induction on the branchings $(e_2,f'_3)$ and $(e'_2, g'_3)$ of $S$ modulo $E$, whose respective multisets $|f'_3|$ and $|g'_3|$ are strictly smaller than $|(f,e,g)|$ with respect to $\omult$ by construction. We get the following situation:
\[
		\raisebox{0.55cm}{
			\xymatrix @R=2em @C=2em {
				{} \ar@1 [r] ^-{f_1} \ar@1 [d] _-{\fleq} & {} \ar@1 [d] ^-{\fleq} \ar@1[rr] ^-{f_2} & {} & {} \ar@1[r] ^-{f_3} & {} \ar@1[d] ^-{e_2} \ar@1[rr] ^-{f_4} & {} & {} \ar@1 [d]^{e_3} \\
				{} \ar@1[r] |-{f_1} \ar@1 [d] _-{e_1} & {} \ar@1 [r] |-{f'_1} & {} \ar@1[d] ^-{e'_1} \ar@1[rr] |-{f'_2} & {} & {} \ar@1 [r] |-{f'_3} & {} \ar@1[d] ^-{e_3} \ar@1 [r] |-{f'_4} & {}  \\
				{} \ar@1 [r] _-{g_1} \ar@1 [d]_-{\fleq} & {} \ar@1 [r] _-{g'_1} \ar@1 [d] _-{\fleq} & {} \ar@1 [rr] |-{g'_2} & {} & {}   \ar@1[d] ^-{e'_2} \ar@1 [r] |-{g'_3} & {}  \ar@1 [r] |-{g'_4} & {} \ar@1 [d] ^-{e'_3}  \\
	{} \ar@1 [r] _-{g_1} & \ar@1[rr] _-{g_2} & &{} \ar@1 [r] _-{g_3} & {} \ar@1 [rr] _-{g_4} & {} & {} }} \]
This process can be repeated, however it terminates in finitely many steps to reach a confluence modulo of the branching $(f,e,g)$, using a similar argument than in the proof of Lemma \ref{L:PreliminaryDecreasingness}. This confluence modulo satisty the properties of (\ref{E:BranchingsDecreasing}) from successive use of Lemmas \ref{L:PastingProperty1} and \ref{L:PastingProperty2}.
\end{proof}

\subsection{Quasi-termination of polygraphs modulo}
\label{SS:QuasiTermination}
In this section, we recall some results on quasi-terminating linear~$(3,2)$-polygraphs from \cite{AM17} and \cite{DM19}.

\subsubsection{Quasi-termination and exponentiation freedom}
A linear~$(3,2)$-polygraph $P$ is \emph{quasi-terminating} if for each sequence $(u_n)_{n \in \N}$ of $2$-cells such that for each $n$ in $\N$ there is a rewriting step from $u_n$ to $u_{n+1}$, the sequence $(u_{n})_{n \in \N}$ contains an infinite number of occurences of the same $2$-cell. A $2$-cell $u$ of $P$ is called a \textit{quasi-normal form} if for any rewriting step from $u$ to another $2$-cell $v$, there exist a rewriting sequence from $v$ to $u$. A quasi-normal form of a $2$-cell $u$ is a quasi-normal form $\cl{u}$ such that there exists a rewriting sequence from $u$ to $\cl{u}$. A linear~$(3,2)$-polygraph $P$ is said \emph{exponentiation free} is for any $2$-cell $u$, there does not exist a $3$-cell $\alpha$ in $P$ such that $$ 
\text{$u \overset{\alpha}{\fl} \lambda u + h$ with $\lambda \in \K \backslash \{ 0 \}$ and $h \ne 0$}. $$ 

\subsubsection{Labelling to the quasi-normal form}
\label{SSS:QuasiNormalFormLabelling}
Given a quasi-terminating linear~$(3,2)$-polygraph $P$, any $2$-cell $u$ in $P_2^\ast$ admits at least quasi normal form. For such a $2$-cell $u$, we fix a choice of a quasi normal form denoted by $\cl{u}$. Then we get a quasi-normal form map $s: P_2^\ell \fl P_2^\ell$ sending a $2$-cell $u$ in $P_2^\ast$ on $\cl{u}$. The \emph{labelling to the quasi-normal form}, labellling QNF for short associates to the map $s$ the labelling $\psiqnf: P_{\text{stp}} \fl \mathbb{N}$ defined by
\[ \psiqnf (f) = d(t_1(f), \cl{t_1(f)}) \]
where $d(t_1(f), \cl{t_1(f)})$ represent the minimal number of rewriting steps needed to reach the quasi normal form $\cl{t_1(f)}$ from $t_1(f)$.

\subsubsection{Proving confluence modulo under quasi-termination}
Alleaume established in \cite{ALPhD} that if $P$ is a quasi-terminating and exponentiation free linear~$(3,2)$-polygraph, then it is locally confluent if and only if all its critical branchings are confluent. The proof of this result consists in two different stepsFirst of all, one proves that the local additive branchings are confluent under quasi-termination and exponentiation freedom using Lemma \ref{L:CellDecomposition}. Then, one proves that the overlapping local branchings are confluent in a similar fashion than in Theorem \ref{T:CriticalBranchingLemmaModulo}.
This result is adapted to rewriting modulo in \cite{DM19}, where it was also established that one can prove decreasingness of a linear~$(3,2)$-polygraph modulo using decreasing confluence modulo of critical branchings modulo:
\begin{proposition}
\label{P:DecreasingnessFromCriticalPairs}
Let $(R,E,S)$ be a left-monomial linear~$(3,2)$-polygraph modulo such that $\ERE$ is quasi-terminating and $S$ is exponentiation free. The following properties hold:
\begin{enumerate}[{\bf i)}]
\item $S$ is locally confluent modulo $E$ if and only if its critical branchings of the form $\mathbf{a_0)}$ and $\mathbf{b_0)}$ of Theorem \ref{T:CriticalBranchingLemmaModulo} are confluent modulo $E$.
\item If all these critical branchings of $S$ modulo $E$ are decreasing with respect to  the labelling to the quasi-normal form $\psiqnf$, then $S$ is decreasing.
\end{enumerate}
\end{proposition}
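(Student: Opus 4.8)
The plan is to prove the two parts separately, in each case reducing a general local branching modulo to the critical ones by exploiting the classification of local branchings modulo established in Section \ref{SS:ConfluenceModulo}, and replacing the Huet double noetherian induction on $\Saux$ used in the proof of Theorem \ref{T:CriticalBranchingLemmaModulo} by arguments relying on quasi-termination of $\ERE$ together with the exponentiation freedom of $S$. Throughout, Lemma \ref{L:CellDecomposition} is the essential tool allowing to split a possibly non-positive elementary $3$-cell $\alpha = \beta \star_2 \gamma^-$ into rewriting steps of $S$ so that the positivity constraint on the $3$-cells of $S$ is respected.

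For part \textbf{i)}, the converse implication is immediate, since critical branchings of the form $\mathbf{a_0)}$ and $\mathbf{b_0)}$ are particular local branchings modulo. For the direct implication I would go through the list of local branchings modulo. Aspherical branchings are trivially confluent modulo $E$. Local Peiffer branchings and local Peiffer branchings modulo involve two reductions that do not overlap, hence complete each other after using Lemma \ref{L:CellDecomposition} to handle the intermediate $3$-cells. The genuinely new point compared with the terminating setting of Theorem \ref{T:CriticalBranchingLemmaModulo} is the treatment of the additive branchings and additive branchings modulo: here one cannot invoke a well-founded order on $\Saux$, and instead I would argue directly that exponentiation freedom forbids the pathological situation $u \fl \lambda u + h$ with $\lambda \neq 0$ and $h \neq 0$, so that the two summands may be reduced independently and recombined. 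Finally, a non-critical overlapping branching (resp. overlapping branching modulo) has the form $(C[f'], C[g'])$ (resp. $(C[f'], C[e'])$) for a context $C$ of $R_2^\ast$ and a critical branching $(f',g')$ (resp. $(f',e')$); confluence modulo $E$ of the critical branching is preserved under application of $C$, giving confluence modulo of the original branching. This last reduction step uses no termination hypothesis at all.

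For part \textbf{ii)}, I would start from the labelling to the quasi-normal form $\psiqnf$ of \ref{SSS:QuasiNormalFormLabelling}, promoted to a well-founded labelling modulo by setting $\psiqnf(e) = 1$ for every $e$ in $E$; this is legitimate since $E$-equivalent $2$-cells share the same fixed quasi-normal form and hence the same distance $d$. By Theorem \ref{T:ConfluenceByDecreasingness}, once $S$ is shown to be decreasing modulo $E$ it is confluent modulo $E$, so it suffices to prove that every local branching modulo is decreasing. Each class of local branching is then revisited exactly as in part \textbf{i)}: aspherical, Peiffer and Peiffer modulo branchings produce confluence diagrams that are decreasing because the completing $3$-cells can only lower the distance to the quasi-normal form; additive and additive modulo branchings are decreasing by the same exponentiation-freedom argument combined with Lemma \ref{L:CellDecomposition}; and a non-critical overlapping branching $(C[f'], C[g'])$ inherits a decreasing confluence diagram from the decreasing diagram of the underlying critical branching supplied by hypothesis.

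I expect the main obstacle to be twofold. First, the additive branchings under quasi-termination: without a terminating order, confluence of $u+v$ can fail if reducing one summand reproduces the other summand, and it is precisely exponentiation freedom that must be invoked to exclude this; making this rigorous requires carefully tracking, through the decomposition of Lemma \ref{L:CellDecomposition}, which monomials are allowed to be rewritten by a positive $3$-cell of $S$. Second, and most delicate for part \textbf{ii)}, is propagating decreasingness through a context $C$ in the overlapping case: the value $\psiqnf(C[f])$ is governed by the distance of $C[t_1(f)]$ to its quasi-normal form, which a priori need not coincide with $d(t_1(f), \cl{t_1(f)})$. I would therefore need to verify that the chosen family of quasi-normal forms can be taken compatible with contexts, equivalently that the inequalities \textbf{i)}--\textbf{v)} defining decreasingness modulo are preserved under $C[-]$; establishing this compatibility is the crux of the argument and the point where the quasi-terminating hypothesis on $\ERE$ is genuinely used.
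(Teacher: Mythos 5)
A preliminary remark: the paper does not actually prove Proposition \ref{P:DecreasingnessFromCriticalPairs}. It is imported from \cite{DM19} (the introduction explicitly says it is ``proved in \cite{DM19}''), and Section \ref{SS:QuasiTermination} only records a two-sentence indication of the strategy for Alleaume's non-modulo analogue from \cite{ALPhD}: the additive branchings are handled ``under quasi-termination and exponentiation freedom using Lemma \ref{L:CellDecomposition}'', and the overlapping ones ``in a similar fashion than in Theorem \ref{T:CriticalBranchingLemmaModulo}''. So there is no in-paper proof to compare yours against; your outline is consistent with that indication, but judged as a proof it has two genuine gaps, located exactly where the citation is doing the work.

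First, the Peiffer, Peiffer modulo, additive and additive modulo cases. You claim the Peiffer branchings ``complete each other after using Lemma \ref{L:CellDecomposition}'', reserving exponentiation freedom for the additive ones. But look at how the terminating proof of Theorem \ref{T:CriticalBranchingLemmaModulo} treats precisely these cases: after decomposing the possibly non-positive completing cells as $f_1\star_2 f_2^-$ and $g_1\star_2 g_2^-$, one is left with a residual branching $(f_2,g_2)$, and its confluence modulo $E$ is obtained from the inductive hypothesis of Huet's double noetherian induction on $\Saux$, which is available only because $\ERE$ is terminating there. Under mere quasi-termination that induction disappears, and your plan supplies no substitute for it; this, not the mere invocation of Lemma \ref{L:CellDecomposition}, is the whole difficulty, and it arises in the Peiffer cases just as much as in the additive ones. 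Nor does exponentiation freedom, as you use it, fill the hole: the positivity failures that force the decomposition come from the rewritten monomial colliding with monomials of the ambient cell (the $w$ in the Peiffer case, the reduct of the other summand in the additive case), and the condition forbidding $u \fl \lambda u + h$ says nothing about those collisions. Whatever exponentiation freedom actually buys here (plausibly, that a step $f : u \fl u'$ cannot have $u$ in $\text{Supp}(u')$ alongside other terms, which would keep the residual redexes disjoint and allow the completion process to be iterated and closed), you never articulate it, so the only cases where the hypotheses of the proposition can enter are asserted rather than proved.

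Second, part ii). You correctly identify that decreasingness of a critical branching for $\psiqnf$ does not automatically transfer to its contextualizations $(C[f'],C[g'])$, because $\psiqnf(C[f'])$ is the distance from the target of $C[f']$ to its chosen quasi-normal form, which need not be controlled by the distance from the target of $f'$ to $\cl{t_2(f')}$. But every non-critical overlapping branching modulo is such a contextualization, so this is not a technical afterthought: it is the main content of part ii). Your proposed remedy, choosing the family of quasi-normal forms ``compatible with contexts'', is itself an unproven claim, and not an obviously true one, since quasi-normal forms are only distinguished up to rewriting cycles; the proof therefore stops where the real work begins. (Incidentally, invoking Theorem \ref{T:ConfluenceByDecreasingness} inside part ii) is unnecessary: decreasingness of $S$ is by definition the statement that all local branchings modulo are decreasing for some labelling, so part ii) is exactly the reduction of that property to critical branchings, with no detour through confluence.)
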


\subsection{Linear bases by confluence modulo}
\label{SS:LinearBasesModulo}
We give a method to compute a hom-basis for a linear~$(2,2)$-category $\mathcal{C}$ from a presentation of $\mathcal{C}$ by a linear~$(3,2)$-polygraph $P$ admitting a convergent subpolygraph $E$ such that the polygraph with set of $3$-cells $R_3 = P_3 \backslash E_3$ is confluent modulo $E$, and $\ERE$ is terminating, or quasi-terminating.

\subsubsection{Splitting of a polygraph}
Given a linear~$(3,2)$-polygraph $P$, recall that a \emph{subpolygraph} of $P$ is a linear~$(3,2)$-polygraph $P'$ such that $P'_i \subseteq P_i$ for any $0 \leq i \leq 3$. A \emph{splitting} of $P$ is a pair $(E,R)$ of linear~$(3,2)$-polygraphs such that:
\begin{enumerate}[{\bf i)}]
\item $E$ is a subpolygraph of $P$ such that $E_{\leq 1} = P_{\leq 1}$,
\item $R$ is a linear~$(3,2)$-polygraph such that $R_{\leq 2} = P_{\leq 2}$ and $P_3 = R_3 \coprod E_3$.
\end{enumerate}
Such a splitting is called  \emph{convergent} if we require that $E$ is convergent. Note that any linear~$(3,2)$-polygraph $P$ admits a convergent splitting given by $(P_0,1,P_2,\emptyset)$ and $(P_0,P_1,P_2,P_3)$. It is not unique in general. The data of a convergent splitting of a linear~$(3,2)$-polygraph $P$ gives two distinct linear~$(3,2)$-polygraphs $R= (P_0,P_1,P_2,R_3)$ and $E= (P_0,P_1,E_2,E_3)$ satisfying $R_{\leq 1} = E_{\leq 1}$ and $E_2 \subseteq P_2$, so that we can construct a linear~$(3,2)$-polygraph modulo from $R$ and $E$. Note that when $P$ is left-monomial, if $(E,R)$ is a splitting of $P$, then both $E$ and $R$ are left-monomial.

\subsubsection{Normal forms modulo}
Let us consider a linear~$(3,2)$-polygraph $P$ presenting a linear~$(2,2)$-category $\mathcal{C}$, $(E,R)$ a convergent splitting of $P$ and $(R,E,S)$ a normalizing linear~$(3,2)$-polygraph modulo such that $S$ is confluent modulo $E$.

$S$ being normalizing, each $2$-cell $u$ of $R_2^\ell$ admits at least one normal form with respect to $E$, and all these normal forms are congruent with respect to $E$. We fix such a normal form that we denote by $\widehat{u}$, with the convention that if $u$ is already a normal form with respect to $E$, then $\widehat{u} = u$. By convergence of $E$, any $2$-cell $u$ of $R_2^\ell$ admits a unique normal form with respect to $E$, that we denote by $\widetilde{u}$. Note that when $S$ is confluent modulo $E$, the element $\tilda{u}$ does not depend on the chosen normal form $\widehat{u}$ for $u$ with respect to $S$, since two normal forms of $u$ being equivalent with respect to $E$, they have the same normal form with respect to $E$. A \emph{normal form for $(R,E,S)$} of a $2$-cell $u$ in $R_2^\ell$ is a $2$-cell $v$ such that $v$ appears in the monomial decomposition of $\widetilde{w}$ where $w$ is a monomial in the support of $\widehat{u}$. Given a $2$-cell $u$ in $R_2^\ell$, we denote by $\text{NF}_{(R,E,S)}(u)$ the set of all normal forms of $u$ for $(R,E,S)$. Such a set is obtained by reducing $u$ into its chosen normal form with respect to $S$, then taking all the monomials appearing in the $E$-normal form of each element in $\text{Supp}(\widehat{u})$. Note that when $E$ is also right-monomial, the $E$-normal form of a monomial in normal form with respect to $S$ already is a monomial. In particular, this is the case when $E$ is the polygraph of isotopies described in \ref{SSS:PolygraphIsotopy}.

\begin{lemma}
\label{L:BasisLemma}
Let $P$ be a left-monomial linear $(3,2)$-polygraph, $(E,R)$ be a convergent splitting of $P$ and $(R,E,S)$ be a normalizing left-monomial linear~$(3,2)$-polygraph modulo such that $S$ is confluent modulo $E$, and let $\C$ be the category presented by $P$. Then, for any parallel $1$-cells $x$ and $y$ in $R_1^\ast$, the map $\gamma_{x,y} : R_2^\ell(x,y) \to \C(x,y)$ sending each $2$-cell to its congruence class in $\C$ has for kernel the subspace of $R_2^\ell$ made of $2$-cells $u$ such that $\tilda{u} = 0$.
\end{lemma}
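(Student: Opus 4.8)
### Proof strategy for Lemma \ref{L:BasisLemma}

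The plan is to prove the equality of subspaces $\ker(\gamma_{x,y}) = \{ u \in R_2^\ell(x,y) \mid \tilda{u} = 0 \}$ by a double inclusion, exploiting the fact that the congruence $\equiv$ presenting $\C$ is generated jointly by $E$ and $R$, while $\tilda{\cdot}$ is the $E$-normal form computed after $S$-reduction. First I would record the key compatibility observation: since $(E,R)$ is a convergent splitting of $P$, the congruence $\equiv$ on $R_2^\ell$ defining $\C$ coincides with the equivalence relation generated by the $3$-cells of $S$ together with those of $E$; that is, $u \equiv v$ in $\C$ if and only if $u$ and $v$ are related by a zig-zag of $S$- and $E$-rewritings. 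This is exactly the hypothesis under which the Church--Rosser property modulo $E$ (Subsection \ref{SSS:ChurchRosserModulo}) becomes available.

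For the inclusion $\{ u \mid \tilda{u} = 0\} \subseteq \ker(\gamma_{x,y})$, I would argue that every rewriting step of $S$ and every $3$-cell of $E$ identifies its source and target in $\C$, since both $R_3$ and $E_3$ are among the defining $3$-cells $P_3 = R_3 \coprod E_3$. Hence $u \equiv \widehat{u} \equiv \tilda{u}$ in $\C$, so $\gamma_{x,y}(u) = \gamma_{x,y}(\tilda{u})$; if $\tilda{u} = 0$ then $\gamma_{x,y}(u) = 0$, giving one inclusion directly. The converse inclusion $\ker(\gamma_{x,y}) \subseteq \{ u \mid \tilda{u} = 0 \}$ is the substantive direction. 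Here I would take $u$ with $\gamma_{x,y}(u) = 0$, i.e. $u \equiv 0$ in $\C$, and apply confluence modulo $E$ of $S$. By Lemma \ref{L:EquivalenceConfluence} (or the equivalence with the Church--Rosser property modulo $E$ recorded after it, valid because $S$ is normalizing), from $u \equiv 0$ I can produce positive $3$-cells $f \colon u \fl u'$ and $g \colon 0 \fl v'$ in $S^\ell$ with $u' \equiv_E v'$. Since $0$ is an $S$-normal form, $g$ is trivial and $v' = 0$, so $u' \equiv_E 0$; taking $E$-normal forms and using convergence of $E$ yields $\tilda{u'} = \tilde{0} = 0$. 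Because $\tilda{u}$ does not depend on the chosen $S$-normal form (the remark preceding the lemma, itself a consequence of confluence modulo $E$), we conclude $\tilda{u} = \tilda{u'} = 0$, as desired.

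The main obstacle I anticipate is the careful passage from the abstract statement ``$u \equiv 0$ in $\C$'' to a concrete $S$-rewriting witness terminating at $0$. One must be attentive to the linear structure: a zig-zag relating $u$ to $0$ may pass through $2$-cells whose supports grow and shrink, and the reduction to a single confluence-modulo diagram relies on $S$ being normalizing together with the Church--Rosser property modulo $E$, which is precisely what lets us replace the zig-zag by a span into a common $E$-class. I would be careful to invoke only the normalizing hypothesis (not termination) and to note that the support of $\widehat{u}$ is finite so that $\tilda{u}$ is well-defined as a finite sum of $E$-normal forms; the monomial-by-monomial computation of $\tilda{u}$ is then routine. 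A secondary technical point is confirming that $\gamma_{x,y}$ is well-defined and $\K$-linear as the quotient map $R_2^\ell \slash \equiv \cong \C$ restricted to the hom-space $(x,y)$, which follows because source and target maps and the quotient are compatible with the linear structure; I would state this briefly rather than belabor it.
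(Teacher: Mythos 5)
Your strategy is the same as the paper's: prove both inclusions, getting the easy one from the fact that $S$-steps and $E$-steps become identities in $\C$, and the hard one by turning $u \equiv 0$ into a Church--Rosser-modulo diagram, using left-monomiality to see that $0$ is an $S$-normal form and convergence of $E$ to conclude. The difference is in how the diagram is closed at the very end, and there your argument has a genuine gap.

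The Church--Rosser property hands you $f : u \fl u'$ in $S^\ell$ with $u' \equiv_E 0$, where $u'$ need not be an $S$-normal form. From $u' \equiv_E 0$ and convergence of $E$ you may conclude $\widetilde{u'} = 0$, i.e. that the $E$-normal form of $u'$ itself vanishes; but the lemma is about $\tilda{u} = \widetilde{\widehat{u}}$, and your bridge ``$\tilda{u} = \tilda{u'} = 0$ by independence of the chosen $S$-normal form'' conflates $\widetilde{u'}$ with $\tilda{u'} = \widetilde{\widehat{u'}}$. Nothing you have written shows that $E$-equivalence to $0$ survives the further $S$-reduction $u' \fl \widehat{u'}$: that is precisely a coherence statement between $S$-steps and $E$-steps, and it follows neither from uniqueness of $E$-normal forms nor from the independence remark (which compares two $S$-normal forms of the same $2$-cell, and $u'$ is not one). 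The gap is fillable with one more application of confluence modulo $E$: the branching $(u' \fl \widehat{u'},\, u' \equiv_E 0)$ modulo $E$ closes with positive $S$-cells issuing from $\widehat{u'}$ and from $0$; both being $S$-normal forms, these cells are identities, whence $\widehat{u'} \equiv_E 0$ and $\tilda{u} = \widetilde{\widehat{u'}} = \widetilde{0} = 0$. The paper sidesteps this detour by invoking the Church--Rosser property in its normalizing form (\ref{SSS:ChurchRosserModulo}) directly at the level of $S$-normal forms, obtaining $u \fl \widehat{u}$ in $S^\ell$ together with $e : \widehat{u} \fl 0$ in $E^\ell$ at once, so that $\tilda{u} = \widetilde{\widehat{u}} = \widetilde{0} = 0$ immediately. (A further caveat, which the paper's own proof shares: the equivalence between confluence modulo and Church--Rosser modulo cited from \ref{SSS:JKConfluence} is stated there under termination of $S$, so your parenthetical ``valid because $S$ is normalizing'' asserts more than Lemma \ref{L:EquivalenceConfluence} actually provides.)
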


\begin{proof}
Let us denote by $N$ the set $\{ u \in R_2^\ell \; ; \; \tilda{u} = 0 \}$. Then $N\subseteq \Ker(\gamma)$ since if $u \in N$, there exist positive $3$-cells $f$ in $E^\ell$ and $e$ in $ E^\ell$ such that \[ \xymatrix{ u \ar [r] ^-{f} & \hat{u} \ar[r] ^-{e} & \tilda{u} = 0 } \]
Thus by definition of $S$ there exist a zig-zag sequence of rewriting steps either of $R$ or $E$ between $u$ and $0$, so that $\cl{u} = 0$ in $\C$ and $u $ belongs to $ \Ker(\gamma)$.
Conversely, if $u$ belongs to $\Ker(\gamma)$, that is $\pi (u) = 0$ where $\pi : R_2^\ell \fl \C$ is the canonical projection, there is a zig-zag sequence of rewriting steps $(f_i)$ for $0 \leq i \leq n$ with $f_i$ being either a rewriting step of $R$ or a rewriting step of $E$ such that
\[ \xymatrix{ u \ar[r] ^-{f_1} & u_1 & u_2 \ar [l] _-{f_2} & \dots \ar [r] ^-{f_{n-2}} & u_{n-1} & u_n \ar [r] ^-{f_n} \ar [l] _-{f_{n-1}} & v } \]
$S$ being confluent modulo $E$, it is Church-Rosser modulo $E$ from \ref{SSS:JKConfluence}, and then by \ref{SSS:ChurchRosserModulo}, we get that there exist rewriting sequences $f: u \fl \hat{u}$ and $g: 0 \fl \hat{0}$ in $S^\ell$ and a $3$-cell $e: \hat{v} \fl \hat{0}$ in $E^\ell$. As $S$ is left-monomial, $0$ is a normal form with respect to $S$ so that $\hat{0} = 0$. Then $\hat{u}$ and $0$ are equivalent with respect to $E$ so that, by convergence of the linear~$(3,2)$-polygraph $E$, we get that $\tilda{u} = \widetilde{0}$, and similarly $\widetilde{0} = 0$ since $E$ is left-monomial and $0$ is a normal form with respect to $E$. This finishes the proof.
\end{proof}

We then obtain the following result:
\begin{theorem}
\label{T:BasisByConfluenceModulo}
Let $P$ be a linear~$(3,2)$-polygraph presenting a linear~$(2,2)$-category $\C$, $(E,R)$ a convergent splitting of $P$ and $(R,E,S)$ a linear~$(3,2)$-polygraph modulo such that 
\begin{enumerate}[{\bf i)}]
\item $S$ is normalizing,
\item $S$ is confluent modulo $E$,
\end{enumerate}
then the set of all normal forms for $(R,E,S)$ is a hom-basis of $\mathcal{C}$.
\end{theorem}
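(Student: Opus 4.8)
The plan is to fix a pair of parallel $1$-cells $x$ and $y$ in $R_1^\ast$ and to show that the set $B_{x,y}$ of normal forms for $(R,E,S)$ with $1$-source $x$ and $1$-target $y$ maps, under the canonical projection $\gamma_{x,y}\maps R_2^\ell(x,y)\to \C_2(x,y)$, onto a linear basis of $\C_2(x,y)$. Since $R_{\leq 2}=P_{\leq 2}$ we have $R_2^\ell=P_2^\ell$, so $\gamma_{x,y}$ is the $\K$-linear surjection sending a $2$-cell to its $\equiv$-class, and by Lemma \ref{L:BasisLemma} its kernel is $N_{x,y}=\{u\in R_2^\ell(x,y)\;;\;\tilda{u}=0\}$. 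As the elements of $B_{x,y}$ are pairwise distinct monomials, they are linearly independent in the free vector space $R_2^\ell(x,y)$; writing $V_{x,y}$ for the subspace they span, it will therefore suffice to establish the direct sum decomposition $R_2^\ell(x,y)=N_{x,y}\oplus V_{x,y}$, since then $\gamma_{x,y}$ restricts to an isomorphism $V_{x,y}\overset{\sim}{\longrightarrow}\C_2(x,y)$ carrying $B_{x,y}$ to a basis. This mirrors the non-modulo argument of \cite{AL16,GHM17}, with Lemma \ref{L:BasisLemma} playing the role of the kernel computation.

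For the inclusion $R_2^\ell(x,y)=N_{x,y}+V_{x,y}$, I would start from an arbitrary $2$-cell $u$, reduce it to its chosen $S$-normal form $\widehat{u}$, and then take the $E$-normal form $\tilda{u}=\widetilde{\widehat{u}}$, which is well defined and independent of the choice of $\widehat{u}$ by confluence modulo $E$. Since $S$ is left-monomial, every monomial $w$ in $\text{Supp}(\widehat{u})$ is again $S$-normal, and by linearity of the $E$-normal form map $\widetilde{(\cdot)}$ (a genuine $\K$-linear retraction because $E$ is convergent) one obtains $\tilda{u}=\sum_w\lambda_w\,\widetilde{w}$, a linear combination of elements of $B_{x,y}$, hence $\tilda{u}\in V_{x,y}$. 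Because $u$ and $\tilda{u}$ are connected by a zig-zag of $S$- and $E$-cells, they have the same image under $\gamma_{x,y}$, so $u-\tilda{u}\in\Ker(\gamma_{x,y})=N_{x,y}$ and $u=(u-\tilda{u})+\tilda{u}\in N_{x,y}+V_{x,y}$.

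The crux, and the step I expect to be the main obstacle, is the transversality $N_{x,y}\cap V_{x,y}=0$, which rests on the claim that every element of $B_{x,y}$ is simultaneously $S$-normal and $E$-normal. Here I would argue that if $w$ is an $S$-normal monomial then its $E$-normal form $\widetilde{w}$ is still $S$-normal: indeed $\widetilde{w}$ is $E$-equivalent to $w$, and since $S$-reducibility only depends on the $E$-equivalence class of the source for the polygraphs modulo $\ER$ and $\ERE$ (and for any $S$ with $R\subseteq S\subseteq\ERE$ whose reducibility is $E$-invariant), an $S$-reduction of $\widetilde{w}$ would produce an $S$-reduction of $w$, a contradiction. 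Left-monomiality of $S$ together with $E$-normality of $\widetilde{w}$ then forces each monomial in $\text{Supp}(\widetilde{w})$, hence each $b\in B_{x,y}$, to be both $S$-normal and $E$-normal. Granting this, any $u=\sum_k c_k b_k$ with the $b_k$ distinct in $B_{x,y}$ is itself $S$-normal and $E$-normal, so one may take $\widehat{u}=u$ and then $\tilda{u}=\widetilde{u}=u$; if moreover $u\in N_{x,y}$, then $\tilda{u}=0$ gives $u=0$, and the $b_k$ being distinct monomials, all $c_k=0$.

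Combining the two previous steps yields $R_2^\ell(x,y)=N_{x,y}\oplus V_{x,y}$, whence $\gamma_{x,y}$ restricts to an isomorphism $V_{x,y}\overset{\sim}{\longrightarrow}\C_2(x,y)$ and $B_{x,y}$ is a basis of $\C_2(x,y)$. Ranging over all parallel pairs $(x,y)$ produces the desired hom-basis. The only genuinely delicate point is the $E$-invariance of $S$-reducibility used to prove that normal forms for $(R,E,S)$ are $S$-normal; everything else repackages Lemma \ref{L:BasisLemma} with the linearity of the $E$-normal form map and confluence modulo $E$. When $E$ is moreover right-monomial, as for the polygraph of isotopies of \ref{SSS:PolygraphIsotopy}, the $E$-normal form of an $S$-normal monomial is itself a monomial, which streamlines the verification that $B_{x,y}$ consists of monomials.
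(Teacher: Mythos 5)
Your overall route is the paper's: identify $\Ker(\gamma_{x,y})$ via Lemma \ref{L:BasisLemma} and exhibit the span of the normal forms for $(R,E,S)$ as a linear complement; your sum decomposition $R_2^\ell(x,y)=N_{x,y}+V_{x,y}$ is exactly the paper's surjectivity step, spelled out, and it is correct. The gap is where you yourself locate it, in the transversality $N_{x,y}\cap V_{x,y}=0$. You reduce it to the claim that every element of $B_{x,y}$ is $S$-normal, and you prove that claim only under the extra hypothesis that $S$-reducibility is invariant under $\equiv_E$, which holds for $\ER$ and $\ERE$ but is not part of the statement, which allows an arbitrary cellular extension $R\subseteq S\subseteq\ERE$. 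Worse, the claim itself is false under the stated hypotheses. Take one $0$-cell, one generating $1$-cell, two parallel generating $2$-cells $a,b$ on it, $E_3=\{a\Rrightarrow b\}$, $R_3=\{b\Rrightarrow a\}$ and $S=R$. Then $E$ is convergent, $R$ is terminating hence normalizing, and $R$ is confluent modulo $E$: the $R$-normal form map, which replaces every occurrence of $b$ by $a$, is linear and constant on $E$-equivalence classes, so every branching modulo closes on the common $R$-normal form with a trivial $E$-cell. The $S$-normal monomials are the diagrams written with $a$ only, their $E$-normal forms are the same diagrams written with $b$ only, and these, the normal forms for $(R,E,S)$, are all $S$-reducible. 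So your argument does not cover every $S$ admitted by the statement (note the conclusion of the theorem does hold in this example: the $b$-diagrams still project to a hom-basis), even though it is fine for $\ER$ and $\ERE$, which is what the paper actually uses.

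The step can be closed without any $S$-normality of the elements of $B_{x,y}$. Given $u\in V_{x,y}\cap N_{x,y}$, write $u=\sum_i\lambda_i\,\widetilde{w_i}$ with the $w_i$ pairwise distinct $S$-normal monomials, and lift $u$ to $v=\sum_i\lambda_i w_i$. By left-monomiality of $S$, $v$ is $S$-normal, so $\widehat{v}=v$, and by linearity of the $E$-normal form map (which you already invoke), $\tilda{v}=\widetilde{v}=\sum_i\lambda_i\widetilde{w_i}=u$. Since $v\equiv_E u$, we have $\gamma_{x,y}(v)=\gamma_{x,y}(u)=0$, and Lemma \ref{L:BasisLemma} applied to $v$ gives $u=\tilda{v}=0$. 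This settles transversality for every admissible $S$ on the span of the $2$-cells $\widetilde{w}$; when $E$ is right-monomial (e.g. the isotopy polygraph of \ref{SSS:PolygraphIsotopy}), or when your $S$-normality claim does hold, that span is all of $V_{x,y}$ and the theorem follows. In the remaining case ($E$ not right-monomial and some $\widetilde{w}$ not $S$-normal) one must still check that $V_{x,y}$ equals the span of the $\widetilde{w}$'s; this residual point is glossed over in the paper's own one-line justification as well, so your proposal is, if anything, more explicit than the paper about where the real work lies, but as written it proves a weaker statement than the one claimed.
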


\begin{proof}
Let us denote by $B$ the set of $E$-normal forms of all monomials in normal forms with respect to $S$,
and let $B^{\text{Mon}}$ be the set of all normal forms for $(R,E,S)$. Note that by definition, $B^{\text{Mon}}$ is obtained by considering all the $2$-cells in the support of the elements of $B$. Since $S$ is left-monomial, each normal form in $R_2^\ell$ can be decomposed into a linear combination of monomials in normal form with respect to $S$, and by left-monomiality of $E$, we get that an element of $B$ is a linear combination of monomials in $B^{\text{Mon}}$, so that $B^{\Mon}$ is a basis of $B$. For any $1$-cells $p$ and $q$ of $\C$, the map $\gamma_{x,y} : R_2^\ell (p,q) \fl \C_2(p,q)$ is surjective by definition, each $2$-cell of $\C_2(p,q)$ having at least one representative in $R_2^\ell(p,q)$. Moreover, the restriction of $\gamma_{p,q}$ to the subvector space $B$ of $R_2^\ell$ has for kernel $B \cap \Ker(\gamma_{p,q})$, which is reduced to $\{ 0 \}$ by confluence modulo $E$ of $S$, using Lemma \ref{L:BasisLemma}. This proves that $(\gamma_{p,q})_{|B} $ is a bijection between $B$ and $\C_2(p,q)$, and so $B^{\text{Mon}}$ is a linear basis of $\C_2(p,q)$. 
\end{proof}

\subsubsection{Linear bases under quasi-termination}
\label{R:AdaptationQuasiTerm}
Note that both Lemma \ref{L:BasisLemma} and Theorem \ref{T:BasisByConfluenceModulo} have an adaptation in a non-normalizing but quasi-terminating setting. Indeed, instead of fixing a normal form $\widehat{u}$ with respect to $S$ for any $u$ in $R_2^\ell$, we fix a choice of a quasi-normal form $\cl{u}$ for $u$ satisying $\cl{u} = u$ if $u$ already is a quasi-normal form with respect to $S$. By confluence modulo, $u$ and $v$ are $2$-cells of $R_2^\ell$ such that there is a $3$-cell $e: u \fl v$ in $E^\ell$, then the $2$-cells $\cl{u}$ and $\cl{v}$ are equivalent modulo $E$. We then say that a \emph{quasi-normal form} for $(R,E,S)$ is a monomial appearing in the monomial decomposition of the $E$-normal form of a monomial in $\text{Supp}(\cl{u})$. With a similar proof than above, we obtain the following result:
\begin{theorem}
\label{T:BasisByConfluenceModulob}
Let $P$ be a linear~$(3,2)$-polygraph presenting a linear~$(2,2)$-category $\mathcal{C}$, $(E,R)$ a convergent splitting of $P$ and $(R,E,S)$ a linear~$(3,2)$-polygraph modulo such that
\begin{enumerate}[{\bf i)}]
	\item $S$ is quasi-terminating,
	\item $S$ is confluent modulo $E$,
\end{enumerate}
Then the set of quasi-normal forms form $(R,E,S)$ is a hom-basis of $\mathcal{C}$.
\end{theorem}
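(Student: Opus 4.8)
The plan is to adapt the proof of Theorem \ref{T:BasisByConfluenceModulo} to the quasi-terminating setting, essentially by replacing every occurrence of ``normal form with respect to $S$'' with ``chosen quasi-normal form with respect to $S$'' and verifying that the two ingredients of the original argument survive this substitution. The two ingredients are: first, an analogue of Lemma \ref{L:BasisLemma} characterizing the kernel of the projection $\gamma_{x,y}\maps R_2^\ell(x,y) \to \C_2(x,y)$ as the set of $2$-cells $u$ with $\tilda{u} = 0$; and second, the surjectivity/injectivity argument that turns this kernel computation into a basis statement.

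First I would establish the quasi-termination analogue of Lemma \ref{L:BasisLemma}. The inclusion $\{u : \tilda{u}=0\} \subseteq \Ker(\gamma_{x,y})$ is immediate, since if $\tilda{u}=0$ then $u$ is connected to $0$ by a zig-zag of $R$- and $E$-cells, hence $\cl{u}=0$ in $\C$. For the reverse inclusion, the key observation is that confluence modulo $E$ still yields the Church--Rosser property modulo $E$ (via \ref{SSS:JKConfluence} and Lemma \ref{L:EquivalenceConfluence}, which only requires termination of $\ERE$ in its statement, so here I must instead invoke the quasi-terminating version of confluence modulo that guarantees Church--Rosser modulo). Thus if $u \in \Ker(\gamma_{x,y})$, the zig-zag connecting $u$ to $0$ can be completed to quasi-normal forms $\cl{u}$ and $\cl{0}=0$ that are equivalent modulo $E$; by convergence of $E$ they have the same $E$-normal form, so $\tilda{u} = \widetilde{0}=0$ using left-monomiality of $E$. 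The point where care is needed is that $\cl{u}$ is a \emph{chosen} quasi-normal form rather than a genuine normal form, but since confluence modulo $E$ guarantees that any two quasi-normal forms of $u$ are equivalent modulo $E$, the value $\tilda{u}$ is independent of this choice, exactly as noted in \ref{R:AdaptationQuasiTerm}.

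With the kernel lemma in hand, the basis argument proceeds verbatim as in Theorem \ref{T:BasisByConfluenceModulo}. Let $B$ be the set of $E$-normal forms of the chosen quasi-normal forms of monomials, and let $B^{\Mon}$ be the set of quasi-normal forms for $(R,E,S)$, i.e. the monomials appearing in the supports of elements of $B$. Left-monomiality of $S$ lets me decompose each quasi-normal form into monomial quasi-normal forms, and left-monomiality of $E$ shows $B^{\Mon}$ spans $B$ as a basis. The map $\gamma_{p,q}$ is surjective since every class has a representative in $R_2^\ell(p,q)$, and its restriction to $B$ has kernel $B \cap \Ker(\gamma_{p,q}) = \{0\}$ by the quasi-terminating kernel lemma. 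Hence $(\gamma_{p,q})_{|B}$ is a bijection and $B^{\Mon}$ is a hom-basis.

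The main obstacle I anticipate is the well-definedness and choice-independence of $\tilda{u}$ in the absence of genuine normal forms: under mere quasi-termination a $2$-cell may have several distinct quasi-normal forms, and I must argue that confluence modulo $E$ forces all of them to share a common $E$-normal form so that $\tilda{\,\cdot\,}$ descends to a well-defined linear map. This is precisely the subtlety flagged in \ref{R:AdaptationQuasiTerm}, and resolving it cleanly — rather than the surjectivity bookkeeping, which is routine — is where the work lies. Everything else is a faithful transcription of the normalizing proof.
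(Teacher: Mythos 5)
Your proposal follows the same route as the paper's own proof, which is exactly the adaptation sketched in \ref{R:AdaptationQuasiTerm}: fix a choice of quasi-normal form $\cl{u}$ for every $2$-cell, rerun Lemma \ref{L:BasisLemma} with the $E$-normal form of $\cl{u}$ in place of the normal form, and repeat the bookkeeping of Theorem \ref{T:BasisByConfluenceModulo}. However, the claim you use to dispose of what you yourself identify as the main obstacle is false: confluence modulo $E$ does \emph{not} imply that any two quasi-normal forms of the same $2$-cell are equivalent modulo $E$. Take $E$ empty and $R$ generated by two parallel monomials $a,b$ and the two rules $a \fl b$, $b \fl a$; this is quasi-terminating, exponentiation free and confluent modulo the (empty) $E$, and both $a$ and $b$ are quasi-normal forms of $a$, yet they are not $E$-equivalent. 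The obstruction is structural: for two quasi-normal forms $v_1,v_2$ of $u$, confluence modulo $E$ only produces \emph{reducts} $v_1 \fl^{*} w_1$ and $v_2 \fl^{*} w_2$ with $w_1$ and $w_2$ equivalent modulo $E$; since quasi-normal forms, unlike normal forms, are still reducible, this yields only a zig-zag between $v_1$ and $v_2$, never an $E$-equivalence. Consequently the $E$-normal form of $\cl{u}$ genuinely depends on the choice of $\cl{u}$, and your appeal to \ref{R:AdaptationQuasiTerm} is a misreading: the compatibility recorded there is the different statement that $E$-equivalent $2$-cells have $E$-equivalent \emph{chosen} quasi-normal forms, for one fixed global choice, and the paper's argument is organized precisely so that no choice-independence is ever needed.

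The same conflation of quasi-normal forms with normal forms breaks the step you describe as verbatim. In the normalizing proof, $B\cap\Ker(\gamma_{p,q})=\{0\}$ follows from Lemma \ref{L:BasisLemma} because a linear combination of $S$- and $E$-irreducible monomials is its own normal form, so the kernel lemma applies to it directly. Under quasi-termination the monomials of $B$ are merely quasi-reduced: a combination $u$ of them need not satisfy $\cl{u}=u$, and the adapted kernel lemma (which characterizes $\Ker(\gamma_{p,q})$ by the vanishing of the $E$-normal form of $\cl{u}$) then says nothing about $u$ itself. In the example above, the convention $\cl{u}=u$ for quasi-normal forms forces both $a$ and $b$ into $B$, while $a-b$ rewrites to $0$, so $a-b$ is a nonzero element of the span of $B$ lying in $\Ker(\gamma_{p,q})$. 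Closing this gap is where the actual content lies: one needs the choice of quasi-normal forms to be coherent, selecting a single representative of each rewriting cycle up to $E$ (this is exactly what the normally ordered diagrams accomplish in Section \ref{S:AOB}), together with an argument that representatives of distinct congruence classes stay independent; a mechanical substitution of quasi-normal forms for normal forms does not deliver this. Finally, a smaller but real point you flag without resolving: the Church--Rosser property modulo $E$ that your kernel lemma invokes is established in the paper only under termination of $S$ (Lemma \ref{L:EquivalenceConfluence} and \ref{SSS:JKConfluence}), so in the quasi-terminating setting it requires a separate proof.
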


\section{Affine oriented Brauer category}
\label{S:AOB}
In this section, we illustrate the previous results by computing a hom-basis for the affine Oriented Brauer linear~$(2,2)$-category $\aob$. We describe a linear~$(3,2)$-polygraph $(E,R,\ER)$ for which we prove that $\ER$ is quasi-terminating and $\ER$ is confluent modulo. As a consequence, we prove that a choice of quasi-normal forms yields to the well-known basis obtained in \cite{BCNR14,AL16}.

\subsection{A presentation of $\mathcal{AOB}$}
\label{SS:PresentationAOB}
We recall from \cite{SAV18} the natural presentation of the affine oriented Brauer category from the degenerate affine Hecke monoidal category.

\subsubsection{The degenerate affine Hecke category}
Let $\deghecke$ be the linear~$(2,2)$-category with only one $0$-cell, one generating $1$-cell \raisebox{-3mm}{$\scalebox{0.8}{\upb{}}$} , two generating $2$-cells
\[ \raisebox{-6mm}{$\crossingup{}{}$} : \raisebox{-3mm}{$\scalebox{0.8}{\upb{}}$} \star_0 \raisebox{-3mm}{$\scalebox{0.8}{\upb{}}$} \fl \raisebox{-3mm}{$\scalebox{0.8}{\upb{}}$} \star_0 \raisebox{-3mm}{$\scalebox{0.8}{\upb{}}$} \quad \text{and} \quad \raisebox{2mm}{$\udottsl{}$} : \raisebox{-3mm}{$\scalebox{0.8}{\upb{}}$} \fl \raisebox{-3mm}{$\scalebox{0.8}{\upb{}}$}  \]
and three relations 
\[ \tcrossup{}{} \; \raisebox{8mm}{$=$} \;  \didupb{}{} \raisebox{5mm}{$,$} \qquad \qquad \ybgauchehaut{}{}{}  \; \raisebox{8mm}{$=$} \; \raisebox{1mm}{$\ybdroithaut{}{}{}$} \raisebox{5mm}{$,$} \qquad \udcrossul{}{} \; \raisebox{8mm}{$=$} \; \udcrossdr{}{} \; \raisebox{8mm}{$+$} \; \didup{}{} \raisebox{5mm}{$.$}  \]

Following \cite{SAV18}, $\text{End}_{\deghecke} \left( \raisebox{-3mm}{$\scalebox{0.8}{\upb{}}$}^{\otimes n} \right)$ is isomorphic to the degenerate affine Hecke algebra of degree $n$.

\subsubsection{The linear~$(2,2)$-category $\aob$}
 To define the affine oriented Brauer linear~$(2,2)$-category $\aob$, we add to this data an additional generating $1$-cell \raisebox{-3mm}{$\scalebox{0.8}{\dwb{}}$} that we require to be right dual to \raisebox{-3mm}{$\scalebox{0.8}{\upb{}}$}. Following \ref{SSS:AdjunctionsIn2Cat}, this requires the existence of unit and counit $2$-cells
\[ 
\cuprsl{} \; : \;  1 \fl \raisebox{-3mm}{$\scalebox{0.8}{\dwb{}}$} \star_0 \raisebox{-3mm}{$\scalebox{0.8}{\upb{}}$}, \qquad \text{and} \qquad \caprsl{}{} \; : \; \raisebox{-3mm}{$\scalebox{0.8}{\upb{}}$} \star_0 \raisebox{-3mm}{$\scalebox{0.8}{\dwb{}}$} \fl 1. \]
where $1$ denoted the identity $1$-cell on the only $0$-cell of $\deghecke$. These $2$-cells have to satisfy the adjunction relations 
\[ 
\mathord{
\begin{tikzpicture}[baseline = 0]
  \draw[->,thick,black] (0.3,0) to (0.3,.4);
	\draw[-,thick,black] (0.3,0) to[out=-90, in=0] (0.1,-0.4);
	\draw[-,thick,black] (0.1,-0.4) to[out = 180, in = -90] (-0.1,0);
	\draw[-,thick,black] (-0.1,0) to[out=90, in=0] (-0.3,0.4);
	\draw[-,thick,black] (-0.3,0.4) to[out = 180, in =90] (-0.5,0);
  \draw[-,thick,black] (-0.5,0) to (-0.5,-.4);
\end{tikzpicture}
}  \; = \;
\mathord{\begin{tikzpicture}[baseline=0]
  \draw[->,thick,black] (0,-0.4) to (0,.4);
\end{tikzpicture}
} \; \raisebox{-3mm}{$,$} \qquad
\mathord{
\begin{tikzpicture}[baseline = 0]
  \draw[->,thick,black] (0.3,0) to (0.3,-.4);
	\draw[-,thick,black] (0.3,0) to[out=90, in=0] (0.1,0.4);
	\draw[-,thick,black] (0.1,0.4) to[out = 180, in = 90] (-0.1,0);
	\draw[-,thick,black] (-0.1,0) to[out=-90, in=0] (-0.3,-0.4);
	\draw[-,thick,black] (-0.3,-0.4) to[out = 180, in =-90] (-0.5,0);
  \draw[-,thick,black] (-0.5,0) to (-0.5,.4);
\end{tikzpicture}
}
\; = \;
\mathord{\begin{tikzpicture}[baseline=0]
  \draw[<-,thick,black] (0,-0.4) to (0,.4);
\end{tikzpicture}
} \; \raisebox{-3mm}{$.$} 
\]
We also add an additional $2$-cell defined by a right-crossing as follows:
\begin{equation*} \label{E:RightCrossing}
\mathord{
\begin{tikzpicture}[baseline = 0]
	\draw[<-,thick,black] (0.28,-.3) to (-0.28,.4);
	\draw[->,thick,black] (-0.28,-.3) to (0.28,.4);
\end{tikzpicture}
}
:=
\trightcrosssl{}{} 
\end{equation*} 
that we require to be invertible, namely there exists a two-sided inverse to this $2$-cell, that we will denote by \; $\mathord{
\begin{tikzpicture}[baseline = 0]
	\draw[->,thick,black] (0.28,-.3) to (-0.28,.4);
	\draw[<-,thick,black] (-0.28,-.3) to (0.28,.4);
\end{tikzpicture}
}$. 
The resulting category $\aob$ is called the \emph{affine oriented Brauer category}. It was proved to be a pivotal linear~$(2,2)$-category in \cite{BRU18}, with \raisebox{-3mm}{$\scalebox{0.8}{\dwb{}}$} also being the left dual of \raisebox{-3mm}{$\scalebox{0.8}{\upb{}}$} and the unit and counit $2$-cells being defined as follows:
\[ \raisebox{-3mm}{$\cuplsl{}$} \; \raisebox{-3mm}{$=$}  \; \fishdlsl{} \qquad
\raisebox{-3mm}{$\caplsl{}$} \; \raisebox{-3mm}{$=$}  \; \raisebox{-2mm}{$\fishulsl{}$} \]

The left crossing $2$-cell is then proved to be equal to 
\[ \tleftcrosssl{}{} \]
The inverse condition is then given by the following two relations:
\[ 
\tdcrosslrsl{}{} \; \raisebox{-3mm}{$=$} \;  \mathord{
\begin{tikzpicture}[baseline = 0]
	\draw[<-,thick,black] (0.08,-1.2) to (0.08,.4);
	\draw[->,thick,black] (-0.38,-1.2) to (-0.38,.4);
\end{tikzpicture}} \qquad \qquad 
\tdcrossrlsl{}{} \; \raisebox{-3mm}{$=$} \;  \mathord{
\begin{tikzpicture}[baseline = 0]
	\draw[->,thick,black] (0.08,-1.2) to (0.08,.4);
	\draw[<-,thick,black] (-0.38,-1.2) to (-0.38,.4);
\end{tikzpicture}}
\]

\subsubsection{The linear~$(3,2)$-polygraph $\aob$}
\label{SSS:LinearPolAOB}
Let $\cl{\aob}$ be the linear~$(3,2)$-polygraph having:
\begin{enumerate}[{\bf i)}]
\item one $0$-cell,
\item two biadjoint generating $1$-cells \raisebox{-3mm}{$\scalebox{0.8}{\upb{}}$} and \raisebox{-3mm}{$\scalebox{0.8}{\dwb{}}$},
\item $8$ generating $2$-cells:
\begin{equation}
\label{E:GenCellsAOB} \raisebox{-6mm}{$\crossingup{}{}$} \quad \raisebox{-6mm}{$\crossingdn{}{}$} \quad \raisebox{2mm}{$\udottsl{}$} \quad \raisebox{2mm}{$\ddottsl{}$}
\quad \caplsl{} \quad \cuplsl{} \quad \caprsl{} \quad \cuprsl{} 
\end{equation}
\item the following families of $3$-cells:
\begin{enumerate}[{\bf a)}]
\item Isotopy $3$-cells:
\begin{align}
\label{E:IsotopyCells}
& \mathord{
\begin{tikzpicture}[baseline = 0]
  \draw[->,thick,black] (0.3,0) to (0.3,.4);
	\draw[-,thick,black] (0.3,0) to[out=-90, in=0] (0.1,-0.4);
	\draw[-,thick,black] (0.1,-0.4) to[out = 180, in = -90] (-0.1,0);
	\draw[-,thick,black] (-0.1,0) to[out=90, in=0] (-0.3,0.4);
	\node at (-0.13,0.12) {$\bullet$};
	\node at (0.07,0.2) {$\alpha$};
	\draw[-,thick,black] (-0.3,0.4) to[out = 180, in =90] (-0.5,0);
  \draw[-,thick,black] (-0.5,0) to (-0.5,-.4);
\end{tikzpicture}
} \; \overset{i_1^\alpha}{\Rrightarrow} \;
\mathord{\begin{tikzpicture}[baseline=0]
  \draw[->,thick,black] (0,-0.4) to (0,.4);
  		\node at (0,0) {$\bullet$};
	\node at (0.2,0.1) {$\alpha$};
\end{tikzpicture}
}, \quad
\mathord{
\begin{tikzpicture}[baseline = 0]
  \draw[->,thick,black] (0.3,0) to (0.3,-.4);
	\draw[-,thick,black] (0.3,0) to[out=90, in=0] (0.1,0.4);
	\draw[-,thick,black] (0.1,0.4) to[out = 180, in = 90] (-0.1,0);
	\draw[-,thick,black] (-0.1,0) to[out=-90, in=0] (-0.3,-0.4);
		\node at (-0.09,0.12) {$\bullet$};
	\node at (-0.28,0.2) {$\alpha$};
	\draw[-,thick,black] (-0.3,-0.4) to[out = 180, in =-90] (-0.5,0);
  \draw[-,thick,black] (-0.5,0) to (-0.5,.4);
\end{tikzpicture}
} \;
\overset{i_3^\alpha}{\Rrightarrow} \;
\mathord{\begin{tikzpicture}[baseline=0]
  \draw[<-,thick,black] (0,-0.4) to (0,.4);
    		\node at (0,0) {$\bullet$};
	\node at (0.2,0.1) {$\alpha$};
\end{tikzpicture}
}, \quad 
\mathord{
\begin{tikzpicture}[baseline = 0]
  \draw[-,thick,black] (0.3,0) to (0.3,-.4);
	\draw[-,thick,black] (0.3,0) to[out=90, in=0] (0.1,0.4);
	\draw[-,thick,black] (0.1,0.4) to[out = 180, in = 90] (-0.1,0);
	\draw[-,thick,black] (-0.1,0) to[out=-90, in=0] (-0.3,-0.4);
		\node at (-0.09,0.12) {$\bullet$};
	\node at (-0.28,0.2) {$\alpha$};
	\draw[-,thick,black] (-0.3,-0.4) to[out = 180, in =-90] (-0.5,0);
  \draw[->,thick,black] (-0.5,0) to (-0.5,.4);
\end{tikzpicture}
} \;
\overset{i_4^\alpha}{\Rrightarrow} \;
\mathord{\begin{tikzpicture}[baseline=0]
  \draw[->,thick,black] (0,-0.4) to (0,.4);
    		\node at (0,0) {$\bullet$};
	\node at (0.2,0.1) {$\alpha$};
\end{tikzpicture}
}, \quad
\mathord{
\begin{tikzpicture}[baseline = 0]
  \draw[-,thick,black] (0.3,0) to (0.3,.4);
	\draw[-,thick,black] (0.3,0) to[out=-90, in=0] (0.1,-0.4);
	\draw[-,thick,black] (0.1,-0.4) to[out = 180, in = -90] (-0.1,0);
	\draw[-,thick,black] (-0.1,0) to[out=90, in=0] (-0.3,0.4);
		\node at (-0.13,0.12) {$\bullet$};
	\node at (0.07,0.2) {$\alpha$};
	\draw[-,thick,black] (-0.3,0.4) to[out = 180, in =90] (-0.5,0);
  \draw[->,thick,black] (-0.5,0) to (-0.5,-.4);
\end{tikzpicture}
} \;
\overset{i_2^\alpha}{\Rrightarrow} \;
\mathord{\begin{tikzpicture}[baseline=0]
  \draw[<-,thick,black] (0,-0.4) to (0,.4);
    		\node at (0,0) {$\bullet$};
	\node at (0.2,0.1) {$\alpha$};
\end{tikzpicture}
}, \; \text{for any $\alpha \in \{ 0,1 \}$} \\
 & \cuprdlsl{}{}  \overset{i_1^2}{\Rrightarrow} \cuprdrsl{}{} \; \caprdlsl{}{} \overset{i_3^2}{\Rrightarrow} \caprdrsl{}{} \; 
\cupldlsl{}{} \overset{i_2^2}{\Rrightarrow} \cupldrsl{}{} \; \capldlsl{}{} \overset{i_4^2}{\Rrightarrow} \capldrsl{}{} 
\end{align}
\item degenerate affine Hecke $3$-cells:
\[ \tcrossup{}{} \; \raisebox{8mm}{$\overset{\alpha_+}{\tfl}$} \;  \didupb{}{} \raisebox{5mm}{$,$}  \quad \ybgauchehaut{}{}{}  \; \raisebox{8mm}{$\overset{\beta_+}{\Rrightarrow}$} \; \raisebox{1mm}{$\ybdroithaut{}{}{}$} \raisebox{5mm}{$,$} \quad \udcrossul{}{} \; \raisebox{8mm}{$\overset{\gamma_{l,+}}{\tfl}$} \; \udcrossdr{}{} \; \raisebox{8mm}{$+$} \; \didup{}{}  \raisebox{5mm}{$,$} \qquad \udcrossur{}{} \; \raisebox{8mm}{$\overset{\gamma_{r,+}}{\tfl}$} \; \udcrossdl{}{} \; \raisebox{8mm}{$-$} \; \didup{}{} \raisebox{5mm}{$.$}  \]
and the corresponding $3$-cells with downward orientations respectively denoted by $\alpha_-$,$\beta_-$, $\gamma_{l,-}$ and $\gamma_{r,-}$.
\item Invertibility $3$-cells:
\[ 
\tdcrossrlsl{}{} \; \raisebox{-3mm}{$\overset{E}{\tfl}$} \;  \mathord{
\begin{tikzpicture}[baseline = 0]
	\draw[->,thick,black] (0.08,-1.2) to (0.08,.4);
	\draw[<-,thick,black] (-0.38,-1.2) to (-0.38,.4);
\end{tikzpicture}}  \qquad \qquad
\tdcrosslrsl{}{} \; \raisebox{-3mm}{$\overset{F}{\tfl}$} \;  \mathord{
\begin{tikzpicture}[baseline = 0]
	\draw[<-,thick,black] (0.08,-1.2) to (0.08,.4);
	\draw[->,thick,black] (-0.38,-1.2) to (-0.38,.4);
\end{tikzpicture}}
\]
\item $3$-cells defining the caps and cups:
\[ \tfishdrsl{} \overset{A}{\tfl} \cuprsl{} , \quad \tfishulsl{} \overset{B}{\tfl} \caplsl{}, \quad \tfishursl{} \overset{C}{\tfl} \caprsl{}, \quad \tfishdlsl{} \overset{D}{\tfl} \cuplsl{} \]
\item sliding $3$-cells $s_n^0$ and $s_n^1$ 
 and ordering $3$-cells $o_n$ defined by induction in \cite{BCNR14}, and oriented in the same way than in \cite{AL16}.
\end{enumerate}
\end{enumerate}

We easily prove following \cite{SAV18} that this linear~$(3,2)$-polygraph is a presentation of $\aob$. To study this linear~$(3,2)$-polygraph modulo, we consider its convergent subpolygraph $E$ defined by
$E_i = \cl{\aob}_i$ for $i=0,1$, $E_2$ contains the last six generating $2$-cells in \ref{E:GenCellsAOB} and $E_3$ contains exactly the isotopy $3$-cells (\ref{E:IsotopyCells}). Following \ref{SSS:PolygraphIsotopy}, $E$ is convergent. We denote by $R$ the linear~$(3,2)$-polygraph having the same $i$-cells than $\cl{\aob}$ for $i=0,1,2$ and such that $R_3 = \cl{\aob}_3 \backslash E_3$. From the data of $E$ and $R$, we can then consider the linear~$(3,2)$-polygraph $(R,E,\ER)$, and prove the following result:
\begin{theorem}
\label{T:QuasiTermConfModAOB}
Let $(R,E)$ be the splitting of $\aob$ defined above, then $\ER$ is quasi-terminating and $R$ is confluent modulo $E$.
\end{theorem}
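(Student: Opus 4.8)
The theorem has two separate assertions — quasi-termination of $\ER$ and confluence of $R$ modulo $E$ — and I would establish them in two independent blocks, exactly as the paper signposts in Sections \ref{SS:Quasi-TermAOB} and \ref{SS:ConfluenceModuloAOB}. For quasi-termination, the plan is to exhibit a derivation-based measure in the spirit of \ref{SSS:TerminationByDerivation} that strictly decreases along every rewriting step of $\ER$ except those belonging to finitely many controlled cycles. Concretely, I would first split off the rules that genuinely terminate: most of the degenerate affine Hecke $3$-cells ($\alpha_\pm$, $\beta_\pm$, $\gamma_{l,\pm}$, $\gamma_{r,\pm}$), the invertibility cells $E,F$, the cap/cup-defining cells $A,B,C,D$, and the sliding/ordering cells $s_n^0,s_n^1,o_n$ can be handled by constructing appropriate $2$-functors $X,Y:R_2^\ast\to\catego{Ord}$ and a derivation $d$ into $M_{X,Y,\Z}$ following the pattern of the pearls termination proof reproduced after \ref{SSS:PolygraphIsotopy}. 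The obstruction to outright termination is exactly the phenomenon flagged in the introduction: cyclic dots sliding around cap/cup $2$-cells create rewriting cycles, and bubbles can migrate through strands. So the goal is only to show that all infinite sequences are generated by such cycles, i.e. that the quotient of the rewriting graph by these cycles is well-founded. I would make this precise by choosing a measure that is invariant on the cyclic moves (dot-around-a-cup/cap) but strictly decreasing on all other rules, and checking that only finitely many $2$-cells are reachable once the non-cyclic measure is fixed.

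**Confluence modulo $E$.** For the second assertion I would invoke the machinery assembled in Section \ref{S:LinearRewritingModulo}. Since $\ER$ is quasi-terminating (from the first block) and one checks $\ER$ is exponentiation free — no rule has the form $u\Rrightarrow \lambda u + h$ with $\lambda\neq 0$, $h\neq 0$, which is immediate by inspecting the finite list of $3$-cells in \ref{SSS:LinearPolAOB} — Proposition \ref{P:DecreasingnessFromCriticalPairs} applies. This reduces confluence modulo $E$ to checking that every critical branching of type $\mathbf{a_0)}$ and $\mathbf{b_0)}$ of Theorem \ref{T:CriticalBranchingLemmaModulo} is \emph{decreasing} with respect to the labelling to the quasi-normal form $\psiqnf$ of \ref{SSS:QuasiNormalFormLabelling}, and then Theorem \ref{T:ConfluenceByDecreasingness} (via Proposition \ref{P:DecreasingnessFromCriticalPairs}(ii)) gives confluence modulo $E$. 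The crucial payoff of rewriting modulo here, stressed throughout the introduction, is that critical branchings overlapping a defining rule of $R$ with an isotopy rule of $E$ need not be considered, drastically shrinking the set of branchings to analyse. So the remaining work is to enumerate the genuine critical branchings modulo $E$: overlaps among the Hecke relations $\alpha_\pm,\beta_\pm,\gamma_{l,\pm},\gamma_{r,\pm}$, overlaps of these with the invertibility cells $E,F$, with the cap/cup cells $A,B,C,D$, and with the sliding and ordering cells, together with the $\mathbf{b_0)}$ branchings formed by an $R$-rule against an isotopy $3$-cell of $E$.

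**Carrying out the branching analysis.** For each such critical branching I would produce an explicit confluence diagram and verify the five decreasingness conditions of \ref{SS:ConfluenceByDecreasingness} against $\psiqnf$, using that $\psiqnf(f)$ measures the distance to the chosen quasi-normal form. The $\mathbf{b_0)}$ branchings against $E$ are essentially free: as recalled in the remark after the definition of decreasingness, a branching $(f,e)$ with $e$ an isotopy cell closes trivially as $e^-\cdot f$, and the enlarged inequality $k\le\psiqnf(f)$ in condition (v) is precisely what makes these decreasing. The substantive cases are the Yang–Baxter-type overlaps ($\beta_\pm$ with itself and with crossings), the overlaps producing the $\pm\,\text{id}$ correction terms from $\gamma_{l,\pm},\gamma_{r,\pm}$, and the interaction of dot-slides with the defining cap/cup rules, where the additive structure forces one to track several monomials simultaneously.

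**The main obstacle.** I expect the hardest part to be the first block: establishing quasi-termination of $\ER$ rather than mere termination, since $\ER$ genuinely fails to terminate because of the dot-and-bubble cycles, and a derivation argument must be calibrated to be \emph{constant} exactly on the cyclic moves while strictly decreasing elsewhere — a more delicate balance than the strict-decrease requirement of \ref{SSS:TerminationByDerivation}. I would address this by adapting the two-step derivation used for the polygraph of isotopies $E_I$ (first quotient out the oriented caps/cups by a counting derivation, then bound the remaining dot-configurations), and by arguing directly that the set of $2$-cells reachable from a fixed one, up to the identified cycles, is finite, so that every infinite $\ER$-sequence must revisit a $2$-cell. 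Once quasi-termination and exponentiation freedom are in hand, the confluence half is a finite, if laborious, verification guided by Proposition \ref{P:DecreasingnessFromCriticalPairs}, and the agreement of the resulting quasi-normal forms with the normally ordered dotted oriented Brauer diagrams of \cite{BCNR14} then follows from Theorem \ref{T:BasisByConfluenceModulob}.
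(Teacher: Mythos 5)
Your confluence-modulo half is essentially the paper's own route: quasi-termination plus exponentiation freedom feeds Proposition \ref{P:DecreasingnessFromCriticalPairs}, decreasingness of the critical branchings with respect to $\psiqnf$ is checked case by case, and Theorem \ref{T:ConfluenceByDecreasingness} concludes; your remark that branchings of type $\mathbf{b_0)}$ against an isotopy cell close trivially as $e^-\cdot f$ is exactly the paper's. The genuine gap is in your quasi-termination block, and it is not cosmetic: you have misidentified which rules produce the cycles. You place the sliding cells $s_n^0,s_n^1$ inside the subsystem that you propose to prove terminating by derivations. But these are precisely the rules responsible for non-termination: the paper's Lemma \ref{L:TerminationWithoutSliding} proves termination of $R'=R\,\backslash\,\{s_n^0,s_n^1\}_{n\in\N}$ and then states that adding the sliding cells creates rewriting cycles, so that $R$ itself is not terminating. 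Hence the first step of your plan --- a strictly decreasing derivation in the style of \ref{SSS:TerminationByDerivation} for a subsystem containing $s_n^0,s_n^1$ --- cannot succeed, since no such derivation exists for a non-terminating system.

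Symmetrically, the moves you nominate as the cycles, namely dots travelling around caps and cups, are isotopy $3$-cells of $E$ in this splitting; in the modulo setting they are not rewriting steps of $\ER$ at all and cost nothing, which is the whole point of rewriting modulo. The actual cycles of $\ER$ live on the bubble configurations singled out as quasi-reduced monomials in \ref{SSS:Quasi-Red}, generated by the sliding and ordering cells. A measure calibrated to be invariant on dot-moves and strictly decreasing on everything else is therefore aimed at the wrong rules: it would still have to decrease strictly on $s_n^0,s_n^1$, which is impossible. What is actually needed, and what the paper does, is a weight function $\tau$ (crossings of weight $3$, caps, cups and dots of weight $0$) that is preserved by the isotopy cells of $E$, combined with a two-phase procedure: rewrite so as to strictly decrease $\tau$ as long as possible, then rewrite using only the terminating subpolygraph $R'$; well-foundedness of the associated quasi-ordering together with Lemma \ref{L:TerminationWithoutSliding} forces this to halt at quasi-reduced monomials, which gives quasi-termination. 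Your closing idea --- finiteness of the set of $2$-cells reachable up to cycles --- is essentially what this procedure delivers, so the skeleton of your plan is salvageable once the cycle-generating rules are correctly identified.
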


\subsection{Quasi termination of $\ER$}
\label{SS:Quasi-TermAOB}
To prove quasi-termination of the linear~$(3,2)$-polygraph $\ER$ is quasi-terminating, we will proceed in two steps: at first we will prove that the linear~$(3,2)$-polygraph $R$ minus the sliding $3$-cells is terminating using derivations as in \ref{SSS:TerminationByDerivation}. Then, using a notion of quasi-ordering and a suited notion of polynomial interpretation on $\aob_2^\ell$, we will describe in the same fashion than in \cite{AL16} a procedure proving that every $2$-cell in $\aob$ can be rewritten in a finite number of steps into a monomial on which the only $3$-cells that can be applied are the cells creating cycles.

\subsubsection{Termination without sliding $3$-cells} 
In order to prove that the linear~$(3,2)$-polygraph $\ER$ is quasi-terminating, let us at first state the following lemma:
\begin{lemma}
\label{L:TerminationWithoutSliding}
The linear~$(3,2)$-polygraph $R' = R \; \backslash \{ s_n^0, s_n^1 \}_{n \in \N}$ is terminating.
\end{lemma}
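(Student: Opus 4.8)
The plan is to prove termination of $R'$ using the derivation method recalled in Section~\ref{SSS:TerminationByDerivation}, exactly as in the proof of \cite[Theorem 4.2.1]{GM09} and following the strategy Alleaume used for $\aob$ in \cite{AL16}. The goal is to exhibit two $2$-functors $X\maps (R')_2^\ast \to \catego{Ord}$ and $Y\maps ((R')_2^\ast)^{\text{co}} \to \catego{Ord}$, an abelian group $G$ in $\catego{Ord}$ with strictly monotone addition and no infinite strictly decreasing sequences of non-negative elements, and a derivation $d$ of $(R')_2^\ell$ into $M_{X,Y,G}$, such that the weak inequalities $X(s(\xi)) \geq X(h)$ and $Y(s(\xi)) \geq Y(h)$ hold for every generating $3$-cell $\xi$ of $R'$ and every $h \in \text{Supp}(t(\xi))$, and the strict inequality $d(s(\xi)) > d(h)$ holds for every such $h$. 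Once these data are produced, the termination of $R'$ follows immediately from the criterion of \ref{SSS:TerminationByDerivation}.

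The construction I would carry out in stages, matching the families of $3$-cells remaining in $R'$. First I would assign to the two generating $1$-cells the values $X(\raisebox{-2mm}{$\scalebox{0.6}{\upb{}}$})$, $X(\raisebox{-2mm}{$\scalebox{0.6}{\dwb{}}$})$ and symmetrically for $Y$, and interpret each generating $2$-cell of \eqref{E:GenCellsAOB} as a monotone map; the dot $2$-cells \raisebox{1mm}{$\updsl{}$}, \raisebox{1mm}{$\ddottsl{}$} should increase the counting coordinate (as in the example following \ref{SSS:TerminationByDerivation}, where $X(\identdot{})(i)=i+1$), while crossings are interpreted so as to measure the number of dots above each crossing. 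The point is to make the degenerate affine Hecke $3$-cells ($\alpha_\pm$, $\beta_\pm$, $\gamma_{l,\pm}$, $\gamma_{r,\pm}$) strictly decreasing for $d$: the cells $\alpha_\pm$ and $\gamma$-cells move dots downward through crossings or remove crossings, and the cells $\beta_\pm$ are the Yang--Baxter-type braid moves, which are handled by a standard crossing-counting weight together with a secondary measure controlling the heights of the strands, exactly as for the $3$-polygraph of permutations. The invertibility $3$-cells $E$, $F$ and the cap/cup-defining cells $A,B,C,D$ turn mixed crossings and ``fish'' diagrams into simpler configurations, so they should also strictly decrease a suitable component. The subtle point, already present in \cite{AL16}, is the interaction of the counting weights on the $\gamma$-cells, whose right-hand side is a \emph{sum} of two monomials: here one must verify that $X(s(\gamma))\geq X(h)$ and $d(s(\gamma))>d(h)$ for \emph{both} monomials $h$ in the support of the target, which is why the derivation has to be chosen so that the dot-creating summand is strictly dominated.

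The main obstacle will be the ordering $3$-cells $o_n$, which remain in $R'$ and are defined by induction in \cite{BCNR14} with potentially complicated supports. These cells reorder dotted diagrams into a normal form and their right-hand sides are linear combinations with several monomials of varying combinatorial shape, so the bulk of the work is checking the strict inequality $d(s(o_n)) > d(h)$ uniformly in $n$ for every $h$ in the target support. I would handle this by building the functors $X$, $Y$ so that $d$ records a lexicographically ordered measure $(\text{number of crossings},\ \text{total dot weight},\ \dots)$ valued in a well-chosen group $G$ such as $(\mathbb{N}^k,+)$ with the lexicographic order, for which both hypotheses on $G$ hold, and then verify descent family by family; the verification for $o_n$ mirrors the termination argument already established by Alleaume in \cite{AL16} for the non-modulo presentation, since $R'$ differs from Alleaume's polygraph only by the absence of the sliding cells $s_n^0,s_n^1$ and those were precisely the cells responsible for the cyclic (non-terminating) behaviour. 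Removing them leaves a polygraph whose every rule strictly decreases the chosen measure, so the conclusion follows. I expect the only genuinely delicate verifications to be the simultaneous weak/strict inequalities on the two-monomial targets of the $\gamma$-cells and on the inductively defined $o_n$, both of which are routine once the measure is fixed but require care in the bookkeeping.
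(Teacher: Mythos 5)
Your plan follows the same route as the paper: termination of $R'$ is obtained from the derivation criterion of \ref{SSS:TerminationByDerivation}, with a permutation-style interpretation for the Yang--Baxter cells $\beta_\pm$ and a check of \emph{both} monomials in the targets of the $\gamma$-cells. The only packaging difference is that the paper does not assemble one lexicographic measure but performs successive reductions, which amounts to the same thing: first the derivation $d(u)=\|u\|$ counting crossings, valued in the trivial module $M_{\ast,\ast,\Z}$, strictly decreases on $A,B,C,D,E,F,\alpha_\pm$ and reduces termination of $R'$ to that of the subpolygraph generated by the remaining cells; then a second trivial counting derivation removes the ordering cells $o_n$; and only for the last stage ($\beta_\pm,\gamma_{l,\pm},\gamma_{r,\pm}$) does one need the refined functor $X$ with $X(\mathrm{crossing})(i,j)=(j+1,i)$, $X(\mathrm{dot})(i)=i+1$ and $d(\mathrm{crossing})(i,j)=i$, giving $2i+j+1>2i+j$ on the Yang--Baxter cells and $i>0$ on both target monomials of the $\gamma$-cells.

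The one place where your plan, as written, would fail is the family $o_n$, which you single out as the bulk of the work and propose to control with the components ``number of crossings'' and ``total dot weight''. Neither of these works: the sources and targets of the $o_n$ are built from bubbles and dots only, so both sides have zero crossings, and the relations of \cite{BCNR14} do not strictly decrease the dot count either. The component that does the job, and the one the paper uses, is the number of bubbles of a fixed orientation: the $o_n$ rewrite a bubble of one orientation into a linear combination of diagrams involving only bubbles of the opposite orientation (this is visible, e.g., in the cell $o_0^1$ appearing in the confluence diagrams of Section \ref{SS:ConfluenceModuloAOB}), so this count drops strictly on every monomial of the target. Thus the $o_n$ are dispatched by a one-line counting derivation rather than a delicate inductive verification, and the genuinely refined work is confined to $\beta_\pm$ and the $\gamma$-cells, exactly the two families you correctly flagged. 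Your deferral to Alleaume's argument in \cite{AL16} would indeed supply this bubble-orientation count, but since your lexicographic tuple omits it explicitly, the strict inequality $d(s(o_n))>d(h)$ cannot be established from the components you name.
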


\begin{proof}
Let us proceed in three steps, using the derivation method given in \ref{SS:TerminationDerivation}. We at first consider a derivation $d$  defined by $d(u) = || u ||_{\{ \raisebox{-3mm}{$\smcrossingup{}{}$} , \raisebox{-3mm}{$\smcrossingdn{}{}$} \}}$ into the trivial modulo $M_{\ast, \ast, \Z}$, counting the number of crossing generators in a given $2$-cell. We have that $d ( s_2(\omega)  ) > d( \omega_i )$ for any $3$-cell $\omega$ in $\{  A,B,C,D,E,F, \alpha \}$ and any $\omega_i$ in $\text{Supp}(t( \omega))$. As a consequence, one gets that if the linear~$(3,2)$-polygraph $R''$ defined as $R'$ minus each of these $3$-cell terminates, then so does $R'$. Indeed, otherwise there would exist an infinite reduction sequence $(f_n)_{n \in \N}$ in $R'$ and thus, an infinite decreasing sequence $(d(f_n))_{n \in \N}$ of natural numbers. Moreover, this sequence would be strictly decreasing at each step that is generated by any of these $3$-cells and thus, after some natural number $p$, this sequence would be generated by the other $3$-cells only. This would yield an infinite reduction sequence $(f_n)_{n \geq p}$ in $R''$, which is impossible by assumption.

It remains to prove that the linear~$(3,2)$-polygraph $(R_0,R_1,R_2, \{ \beta_{\pm}, \gamma_{l,\pm}, \gamma_{r, \pm}, o_n \}_{n \in \N})$ terminates. We can still reduce this problem to the termination of the rules $\beta_{\pm}, \gamma_{l,\pm}$ and $\gamma_{r, \pm}$ by considering a derivation $d'$ with values in the trivial modulo $M_{\ast, \ast, \Z}$ counting the number of clockwise oriented bubbles. To prove this, let us consider $X$ the $2$-functor $X: \aob_2^* \to \textbf{Ord}$ on generating $2$-cells by: 
$$X ( \identsm{} ) (i)=i \hspace{1cm} X ( \identdotsm{}  ) (i)= i+1 \hspace{1cm} X ( \raisebox{-5mm}{$\scalebox{0.6}{\crossing{}{}}$} ) (i,j) = (j+1,i ) \quad \forall i,j \in \N$$
for both orientations of strands, and we consider the $\aob_2^\ast$-module $M_{X,*,\Z}$ and define the derivation $d: \aob_2^* \fl M_{X,*,\Z}$ on the generating $2$-cells by \[ d ( \raisebox{-4mm}{$\scalebox{0.6}{\crossing{}{}}$} ) (i,j) = i \hspace{1cm} d ( \identsm{} ) (i) = 0 = d ( \identdotsm{} ) (i) \]
One then checks that the following inequalities hold:
\[  d ( \raisebox{-5mm}{$\scalebox{0.6}{\ybg{}{}{}}$} ) (i,j,k) = 2i+j+1 > 2i+j =  d ( \raisebox{-5mm}{$\scalebox{0.6}{\ybd{}{}{}}$} )   (i,j,k) \]
\[ d ( \raisebox{-5mm}{$\scalebox{0.7}{\dcrossul{}{} }$} ) (i,j) = i > 0 =  d (\raisebox{-5mm}{$\scalebox{0.7}{\dcrossdr{}{}}$} ) (i,j), \quad
 d ( \raisebox{-5mm}{$\scalebox{0.7}{\dcrossur{}{}}$} ) (i,j) =  i > 0 = d ( \raisebox{-5mm}{$\scalebox{0.7}{\dcrossdl{}{} }$} ) (i,j)  \]

so that the $2$-functor $X$ and the derivation $d$ satisfy the conditions ${\bf i)}$, ${\bf ii)}$ and ${\bf iii)}$ of \ref{SSS:TerminationByDerivation}, and thus the corresponding linear~$(3,2)$-polygraph is terminating.
\end{proof}

However, as explained in \cite{AL16}, the addition of the sliding $3$-cells create rewriting cycles, so that $R$ is not terminating. Nethertheless, we will prove that it is quasi-terminating.

\subsubsection{Quasi-orderings}
Following \cite{DER87}, a quasi-ordered set is a set $A$ equipped with a transitive and reflexive binary relation $\qord$ on elements of $A$. For example, for any abstract rewriting system $(A,\fl_R)$, the derivability relation $\fl_R^\ast$ is a quasi-ordering on the set $A$. Given a quasi-ordering $\qord$ on a set $A$, we define the associated equivalence relation $\approx$ as both $\qord$ and $\lesssim$ and the strict partial ordering $>$ as $\qord$ but not $\lesssim$. Such a quasi-ordering is said \emph{total} if for any $a$,$b$ in $A$, we have either $a \qord b$ or $b \qord a$. The strict part $>$ of a quasi-ordering is \emph{well-founded} if and only if all infinite quasi-descending sequences 
\[ a_1 \qord a_2 \qord \dots \]
of elements of $A$ contains a pair $s_j \lesssim s_k$ for $j < k$. A quasi-ordering defined on a set of $2$-cells of a linear~$(2,2)$-category $\mathcal{C}$ is said \emph{monotonic} if 
\[ ( u \qord v ) \; \Rightarrow  (C[u] \qord C[v]) \]
for any context $C$ of $\mathcal{C}$. From \cite{DER87}, if $\qord$ is monotonic then $\approx$ is a congruence. Many termination and quasi-termination proofs in the literature are made using well-founded quasi-orderings defined by monotonic polynomial interpretations, \cite{LAN79}. In the case of linear~$(2,2)$-categories, these polynomial interpretations will be given by weight functions.

\subsubsection{Weight functions}
\label{SSS:WeightFunctions}
Let $\mathcal{C}$ a linear~$(2,2)$-category. A \emph{weight function} on $\mathcal{C}$ is a function $\tau$ from $\mathcal{C}_2$ to $\N$ such that
\begin{enumerate}[{\bf i)}]
\item $\tau (u \star_i v) = \tau (u) + \tau (v)$ for $i=0,1$ for any $i$-composable $2$-cells $u$ and $v$,
\item for each $2$-cell $u$ in $\mathcal{C}_2$,
\[ \tau (u) = \text{max} \{ \tau(u_i) \; | \; u_i \in \text{Supp}(u) \} \]
\end{enumerate}
Note that when $\mathcal{C}$ is presented by a linear~$(3,2)$-polygraph $P$, such a weight function is uniquely and entirely determined by its values on the generating $2$-cells of $P_2$. This enables to define a quasi-ordering $\qord$ on $\cl{\aob}_2^\ell$ by $ u \qord v$ if $\tau(u) \geq \tau(v)$.

\subsubsection{Quasi-reduced monomials}
\label{SSS:Quasi-Red}
Recall from \cite{AL16} that a monomial in $\aob$ is \emph{quasi-reduced} if it can be rewritten by only one of the $3$-cells derived from ordering and sliding $3$-cells in $\ER$ on the following subdiagrams:
\begin{gather*}
\begin{array}{c}
\tikz[scale=0.5]{
\draw[color=black] (1,0.5) arc (0:180:0.5);
\draw[->,color=black] (0,0.5) -- (0,0.25);
\draw[color=black] (1,0.5) -- (1,0.25);
\draw[color=black] (0,2) arc (180:0:0.5);
\draw[color=black] (1,2) arc (360:180:0.5);
\node at (0,2) {$\bullet$};
\node at (-0.5,2) {$n$};
} \end{array} \qquad \begin{array}{c}
\tikz[scale=0.5]{
\draw[color=black] (1,0.5) arc (0:180:0.5);
\draw[color=black] (0,0.5) -- (0,0.25);
\draw[->,color=black] (1,0.5) -- (1,0.25);
\draw[color=black] (0,2) arc (180:0:0.5);
\draw[color=black] (1,2) arc (360:180:0.5);
\node at (0,2) {$\bullet$};
\node at (-0.5,2) {$n$};
} \end{array} \qquad
\begin{array}{c}
\tikz[scale=0.5]{
\draw[color=black] (1,3.5) arc (360:180:0.5);
\draw[color=black, ->] (0,3.5) -- (0,3.75);
\draw[color=black] (1,3.5) -- (1,3.75);
\draw[color=black, ->] (0,2) arc (180:0:0.5);
\draw[color=black, ->] (1,2) arc (360:180:0.5);
\node at (0,2) {$\bullet$};
\node at (-0.5,2) {$n$};
} \end{array} \quad \begin{array}{c}
\tikz[scale=0.5]{
\draw[color=black] (1,3.5) arc (360:180:0.5);
\draw[color=black] (0,3.5) -- (0,3.75);
\draw[color=black, ->] (1,3.5) -- (1,3.75);
\draw[color=black, ->] (0,2) arc (180:0:0.5);
\draw[color=black, ->] (1,2) arc (360:180:0.5);
\node at (0,2) {$\bullet$};
\node at (-0.5,2) {$n$};
} \end{array}
\end{gather*}
for any $n$ in $\N$. We call a 2-cell of $\aob_2^\ell$ quasi-reduced if all monomials in its monomial decomposition are quasi-reduced.

\subsubsection{Quasi-termination of $\ER$}
Following \cite{AL16}, we define a weight function on $\cl{\aob_2^\ell}$ by its following values on generating $2$-cells:
\vspace{-0.5cm}
\[ \tau (\: \raisebox{-1mm}{$\caplsl{}$} ) = \tau (\caprsl{} \: ) = \tau (  \cuprsl{} \: ) = \tau (\: \raisebox{-1mm}{$\cuplsl{}$} ) = 0, \; \tau ( \raisebox{2mm}{$\udottsl{}$} ) = \tau ( \raisebox{2mm}{$\ddottsl{}$} ) = 0, \;
\tau ( \raisebox{-3mm}{$\scalebox{0.6}{\crossingup{}{}}$} ) = \tau ( \raisebox{-3mm}{$\scalebox{0.6}{\crossingdn{}{}}$} ) = 3.
 \]
 Note that for any $3$-cell $\alpha$ in $E_3$, we have $\tau (s_2 (\alpha)) = \tau (t_2 (\alpha))$ so that the isotopy $3$-cells preserve this weight function. Then, starting with a monomial $u$ of $\cl{\aob}_2^\ell$:
 \begin{itemize}
 \item[-] While $u$ can be rewritten with respect to $\ER$ into a $2$-cell $u'$ such that
 $\tau(u') < \tau(u)$, then assign $u$ to $u'$.
\item[-] While $u$ can be rewritten with respect to $\ER$ into a $2$-cell $u'$ without any of the $3$-cells depicted in \ref{SSS:Quasi-Red}, then assign $u$ to $u'$.
 \end{itemize}
 From Lemma \ref{L:TerminationWithoutSliding} and well-foundedness of the quasi-ordering $\qord$ defined in \ref{SSS:WeightFunctions}, this procedure terminates and returns a linear combination of monomials in $\cl{\aob}_2^\ell$ which are quasi-reduced.

\subsection{Confluence modulo}
\label{SS:ConfluenceModuloAOB}
In this subsection, we prove that the linear~$(3,2)$-polygraph modulo $\ER$ is confluent modulo $E$ using Theorem \ref{T:ConfluenceByDecreasingness} and Proposition \ref{P:DecreasingnessFromCriticalPairs}.

\subsubsection{Critical branchings modulo} Let us at first enumerate the list of all critical branchings modulo that we have to prove decreasing with respect to $\psiqnf$. First of all, there are $6$ regular critical branchings implying the degenerate affine Hecke $3$-cells:
\[ (\alpha_{\pm}, \quad \alpha_{\pm}), \quad (\alpha_{\pm},\beta_{\pm}), \quad (\beta_{\pm},\alpha_{\pm}), \quad (\alpha_{\pm}, \gamma_{\eta, \pm})_{\eta \in \{ l,r \}}, \quad (\beta_{\pm}\gamma_{\eta, \pm})_{\eta \in \{ l,r \}}, \quad (\gamma_{l,\pm}, \gamma_{r,\pm}). \]

The first three families are proved confluent modulo in the same way that the polygraph of permutations is proved confluent in \cite{GM09}. The remaining critical branchings are decreasingly confluent as follows:
               
\[ \scalebox{0.9}{\xymatrix@R=1.5em{ \raisebox{-5mm}{$\ducross{}{}$} \ar@1 [r] \ar@1 [d] _-{\fleq} & \raisebox{-5mm}{$\dcrossudl{}{}$} - \raisebox{-5mm}{$\didl{}{}$} \ar@1[r] & \raisebox{-5mm}{$\dcrossudl{}{}$} - \raisebox{-5mm}{$\didl{}{}$} + \raisebox{-5mm}{$\didl{}{}$} \ar@1 [d] ^-{\fleq} \\
\raisebox{-5mm}{$\ducross{}{}$} \ar [r] &  \raisebox{-5mm}{$\dcrossudr{}{}$} + \raisebox{-5mm}{$\didr{}{}$} \ar@1[r] &  \raisebox{-5mm}{$\dcrossudr{}{}$} + \raisebox{-5mm}{$\didr{}{}$} - \raisebox{-5mm}{$\didr{}{}$} }} \qquad \xymatrix@R=1.5em@C=1em{ \raisebox{-5mm}{$\tcrossul{}{}$} \ar@1 [r] \ar@1 [d] _{\fleq} & \raisebox{-5mm}{$\tcrossmr{}{}$} \ar@1[r] &  \raisebox{-5mm}{$\tcrossdl{}{}$} \ar@1[r] & \raisebox{-5mm}{$\didl{}{}$} \ar@1 [d] ^-{\fleq}  \\
              \raisebox{-5mm}{$\tcrossul{}{}$}  \ar@1[rrr] &   &    &  \raisebox{-5mm}{$\didl{}{}$} }  \]

\[ 
\scalebox{0.9}{\xymatrix@R=1.5em@C=1em{ \raisebox{-5mm}{$\ybgul{}{}{}$}  \ar@1 [r] \ar@1 [d] _-{\fleq} & \raisebox{-5mm}{$\ybgmmm{}{}{}$} + \raisebox{-5mm}{$\cdcg{}{}{}$}  \ar@1 [r] & \raisebox{-5mm}{$\ybgdr{}{}{}$} + \raisebox{-5mm}{$\cdcg{}{}{}$} + \raisebox{-5mm}{$\tcross{}{}$} \ar@1 [dr] ^-{\rotatebox{145}{=}} \\
\raisebox{-5mm}{$\ybgul{}{}{}$} \ar@1 [r] & \raisebox{-5mm}{$\ybdul{}{}{}$} \ar@1 [r] & \raisebox{-5mm}{$\ybdmmm{}{}{}$} + \raisebox{-5mm}{$\tcross{}{}$} \ar@1 [r] & \raisebox{-5mm}{$\ybddr{}{}{}$} + \raisebox{-5mm}{$\tcross{}{}$} + \raisebox{-5mm}{$\cdcg{}{}{}$} }}
  \]
for both orientations of strands. In the last two cases, we proceed similarly if the dot is placed on another strand. Following the study of the $3$-polygraphs of permutations in \cite{GM09}, there also are two families of indexed critical branchings given by 
\begin{align} \label{indexyb}
\indexybsl{k}
\end{align}
where the normal forms $k$ that we can plug in (\ref{indexyb}) are of the form
\begin{enumerate}[{\bf i)}]
\item $\identdots{}{n}$ for every $n \in N$, which is just an identity if $n=0$.
\item \raisebox{-6mm}{$\scalebox{1}{\dcrossdldot{}{}{n}}$}  \: for all $n \in \N$.
\end{enumerate}
These indexed critical branchings are confluent modulo $E$, and the proof of their confluence is not drawn here but can be found in a forecoming paper, in the more general combinatorics of KLR algebras. The critical branchings modulo implying the sliding and ordering $3$-cells are proved confluent modulo $E$ in a similar fashion than in \cite{AL16}.
We then give the exhaustive list of all critical branchings modulo implying the $3$-cells $A$,$B$,$C$,$D$,$E$ and $F$. First of all, these branchings overlap with degenerate affine Hecke relations to give the following sources of critical branchings modulo:
\[ (A,C), \quad (B,D), \quad  (B,F), \quad  (E,D), \quad (C,E), \quad (E,F), \quad (F,E), \]
\[ (A, \gamma_{l,+}), \quad (B, i_4^2 , \gamma_{l,+}), \quad (D, \gamma_{r,+}), \quad (E , \gamma_{l,+}), \quad (\gamma_{r,+} , i_3^2 , C), \quad (F , \gamma_{r,+}), \]
\[ (\alpha_+ , i_1^0 \star_2 i_4^0 , F), \quad (\gamma_{r,+}, i_1^0 \star_2 i_4^0 \star_2 (i_3^2 \star_2 i_1^2)^-, F), \quad (\beta_+, i_1^0 \star_2 i_4^0, F), \]
\[ (\alpha_+, i_1^0 \star_2 i_4^0, E), \quad (\gamma_{l,+}, i_1^0 \star_2 i_4^0 \star_2 (i_2^2 \star_2 i_4^2)^- , E). \]

Some of these branchings are proved decreasingly confluent with respect to $\psiqnf$ by the confluence modulo diagrams below. The remaining one are obtained by symmetries of the diagrams and are thus not drawn.
\[ \hspace{-1cm} 
\xymatrix@C=2em{ \bcritacsl{} \ar@1 [d] _-{\fleq}
  \ar@1 [r] ^-{C}
 & \negbubsdsl{} \ar@1[r] ^-{o_0^1} & \posbubsdsl{} \ar@1 [d] ^-{\fleq} \\
  \bcritacsl{} \ar@1 [rr] _-{A} & & \posbubsdsl{} }
  \quad
\xymatrix@R=2.5em@C=2em{ \bcritafsl{} 
\ar@1 [rr] ^-{F}
\ar@1 [d] _-{\fleq} & & \cuplsl{} \ar@1[d] ^-{\fleq} \\
\bcritaf{} \ar@1 [r] _-{A} & \tfishdlsl{} \ar@1 [r] _-{D} & \cuplsl{} }  
\quad 
 \xymatrix@R=2.5em@C=2em{
\bcritbfsl{} \ar@1 [r] ^-{B} \ar @1 [d] _-{\fleq} & \tfishursl{}  \ar @1[r] ^-{C} & \caprsl{} \ar @1 [d] _-{\fleq} \\
\bcritbfsl{} \ar@1 [rr] _-{F} & &  \caprsl{} }
\] \[
\hspace{-1.5cm} 
 \xymatrix@R=2.5em@C=2em{
\bcritbdsl{} \ar@1 [r] ^-{B}
            \ar@1 [d] _-{\fleq} & \negbubsdsl{} \ar@1[r] ^-{o_0^1} & \posbubsdsl{} \ar@1 [d] ^-{\fleq} \\
\bcritbdsl{} \ar@1 [rr] _-{D} & & \posbubsdsl{} } 
\;
\xymatrix@R=2.5em@C=2em{
\bcritdesl{} \ar@1 [rr] ^-{E}
            \ar@1 [d] _-{\fleq} & & \cuprsl{} \ar [d] ^-{\fleq} \\
\bcritdesl{} \ar@1 [r] _-{D} & \tfishdrsl{} \ar@1 [r] _-{A} & \cuprsl{} } \;
\xymatrix@R=2.5em@C=2em{
\bcritcesl{} \ar@1 [r] ^-{C} 
            \ar @1 [d] _-{\fleq} & \tfishulsl{}  
            \ar @1 [r] ^-{B} & \caplsl{} \ar@1 [d] ^-{\fleq} \\
\bcritcesl{} \ar@1 [rr] _-{E}  & &   \caplsl{} } \] \[
\xymatrix@R=2em@C=2em{
\raisebox{7mm}{$\bcritfesl{}{}$} \ar @1 [r] ^-{F}
               \ar@1 [d] _-{\fleq} &  \tleftcrosssl{}{} \ar@1 [d] ^-{\fleq} \\
               \raisebox{7mm}{$\bcritfesl{}{}$} \ar @1 [r] _-{E} &  \tleftcrosssl{}{} }
\quad 
\xymatrix@C=4.5em@R=3em{ \cbadotsl{} 
  \ar@1[rrr] ^-{A_{i,\lambda}}
  \ar@1 [d] _-{\rotatebox{90}{=}} & & & \cuprdrsl{}{} \ar@1 [d] ^-{\fleq}  \\
  \cbadotsl{} \ar@1 [r] _-{\gamma_{l,+}} & \cbadotbsl{} + \ruleawdotsl{} \ar@1[r] _-{i_2^2 \star_2 (i_3^2)^- \cdot \gamma_{r,+}} & \cbadotcsl{}  \ar@1[r] _-{(i_1^2)^- \cdot A} & \cuprdrsl{}{} 
  } \]
\[
\xymatrix@C=4.5em@R=3em{ \cbbdotsl{} 
  \ar@1[rrr] ^-{B}
  \ar@1 [d] _-{\rotatebox{90}{=}} & & & \capldlsl{}{} \ar@1 [d] ^-{(i_4^2)^-} \\
 \cbbdotsl{} \ar@1 [r] ^-{i_4^2 \cdot \gamma_{l,+}} & \cbbdotbsl{} + \rulebwdotsl{} \ar@1 [r] _-{i_2^2 \star_2 (i_3^2)^- \cdot \gamma_{r,+}} & \cbbdotcsl{} \ar@1[r] _-{B} & \capldrsl{}{} }   \]

 \[ \xymatrix@C=6em@R=2.5em{ 
\tdcrossrldtestsl{}{} \ar@1 [rr] ^-{E}
                       \ar@1 [d] _-{\fleq}   & &  \diddownupdlsl{}{} \ar @1 [d] ^-{\fleq} \\
\tdcrossrldtestsl{}{} \ar@1 [r] _-{\gamma_{l,+} \star_2 \gamma_{r,+}} & \tdcrossrlddsl{}{} - \cuprbsl{} + \acaplsl{} \ar@1 [r] _-{E-A+B} &  \diddownupdlsl{}{} } \]

\[ \xymatrix@R=2.5em@C=3em{
 \isocbasl{}{} \ar@1[r] ^-{\alpha} \ar@1 [d] _-{i_1^0 \star_2 i_4^0} & \begin{tikzpicture}[baseline = 0, scale = 1.2]
  \draw[-,thick,black] (0,0.4) to[out=180,in=90] (-.2,0.2);
  \draw[->,thick,black] (0.2,0.2) to[out=90,in=0] (0,.4);
 \draw[-,thick,black] (-.2,0.2) to[out=-90,in=180] (0,0);
  \draw[-,thick,black] (0,0) to[out=0,in=-90] (0.2,0.2);
 \end{tikzpicture} \identusl{} \ar@1 [r] ^-{s_0^0} & \identusl{} \raisebox{-2mm}{$\begin{tikzpicture}[baseline = 0, scale=1.2]
  \draw[->,thick,black] (0,0.4) to[out=180,in=90] (-.2,0.2);
  \draw[-,thick,black] (0.2,0.2) to[out=90,in=0] (0,.4);
 \draw[-,thick,black] (-.2,0.2) to[out=-90,in=180] (0,0);
  \draw[-,thick,black] (0,0) to[out=0,in=-90] (0.2,0.2);
 \end{tikzpicture}$} \ar@1 [d] ^-{\fleq} \\
 \isocbbsl{}{} \ar@1 [rr] _-{F} & & \identusl{} \raisebox{-2mm}{$\begin{tikzpicture}[baseline = 0, scale=1.2]
  \draw[->,thick,black] (0,0.4) to[out=180,in=90] (-.2,0.2);
  \draw[-,thick,black] (0.2,0.2) to[out=90,in=0] (0,.4);
 \draw[-,thick,black] (-.2,0.2) to[out=-90,in=180] (0,0);
  \draw[-,thick,black] (0,0) to[out=0,in=-90] (0.2,0.2);
 \end{tikzpicture}$} } 
 \quad
 \xymatrix@R=2.5em@C=3em{\isocbadotsl{}{} \ar@1 [d] _-{i_1^0 \star_2 i_4^0} \ar@1 [r] ^-{\gamma_{r,+}} & \isocbadotbbsl{}{} - \raisebox{-4mm}{$\isocbadotbcsl{}$} \ar@1 [r] ^-{\alpha_+ - (i_1^0)^- \cdot C } &   \raisebox{-2mm}{$\begin{tikzpicture}[baseline = 0, scale=1.2]
  \draw[->,thick,black] (0,0.4) to[out=180,in=90] (-.2,0.2);
  \draw[-,thick,black] (0.2,0.2) to[out=90,in=0] (0,.4);
 \draw[-,thick,black] (-.2,0.2) to[out=-90,in=180] (0,0);
  \draw[-,thick,black] (0,0) to[out=0,in=-90] (0.2,0.2);
  \node at (0.2,0.2) {$\bullet$};
 \end{tikzpicture}$} \identusl{} - \identusl{} \ar@1 [r] ^-{s_1 ^0} & \identusl{} \raisebox{-2mm}{$\begin{tikzpicture}[baseline = 0, scale=1.2]
  \draw[->,thick,black] (0,0.4) to[out=180,in=90] (-.2,0.2);
  \draw[-,thick,black] (0.2,0.2) to[out=90,in=0] (0,.4);
 \draw[-,thick,black] (-.2,0.2) to[out=-90,in=180] (0,0);
  \draw[-,thick,black] (0,0) to[out=0,in=-90] (0.2,0.2);
  \node at (0.2,0.2) {$\bullet$};
 \end{tikzpicture}$} \ar@1 [d] ^-{\fleq} \\ \isocbbdotsl{}{} \ar@1 [rrr] _-{F}  & & & \identusl{} \raisebox{-2mm}{$\begin{tikzpicture}[baseline = 0, scale=1.2]
  \draw[->,thick,black] (0,0.4) to[out=180,in=90] (-.2,0.2);
  \draw[-,thick,black] (0.2,0.2) to[out=90,in=0] (0,.4);
 \draw[-,thick,black] (-.2,0.2) to[out=-90,in=180] (0,0);
  \draw[-,thick,black] (0,0) to[out=0,in=-90] (0.2,0.2);
  \node at (0.2,0.2) {$\bullet$};
 \end{tikzpicture}$} } \]
 
\[ \xymatrix@R=2.5em@C=3em{
\raisebox{3mm}{$\isocbacrsl{}{}$} \ar [d] _-{(i_1^0)^- \star_2 (i_4^0)^-}
\ar [r] ^{\beta^+} & \raisebox{3mm}{$\isocbacrbsl{}{}$} \: \equiv_E \: \raisebox{3mm}{$\isocbbcrbsl{}{}$} \ar [r] ^-{C}  & \raisebox{3mm}{$\isocbbcrcsl{}{}$} \ar [rr] ^-{(i_1^0)^- \star_2 (i_4^0)^- \cdot \alpha^+} &  & \identusl{} \identusl{} \ar [d] ^-{(i_1^0)^-} \\
\raisebox{2mm}{$\isocbbcrsl{}{}$} \ar [rr] _-{F} & & \identusl{} \isocbadotbcsl{} \ar [rr] _-{(i_1^0)^- \cdot C} & & \identusl{} \: \begin{tikzpicture}[baseline = 0]
  \draw[->,thick,black] (0.3,0) to (0.3,.4);
	\draw[-,thick,black] (0.3,0) to[out=-90, in=0] (0.1,-0.4);
	\draw[-,thick,black] (0.1,-0.4) to[out = 180, in = -90] (-0.1,0);
	\draw[-,thick,black] (-0.1,0) to[out=90, in=0] (-0.3,0.4);
	\draw[-,thick,black] (-0.3,0.4) to[out = 180, in =90] (-0.5,0);
  \draw[-,thick,black] (-0.5,0) to (-0.5,-.4);
\end{tikzpicture}
}
\]
 
\subsubsection{Normally ordered Brauer diagrams}
A \emph{dotted oriented Brauer diagram} is a planar string diagram built from $\star_0$ and $\star_1$-compositions of the above generating $2$-cells in which every edge is oriented and is either a bubble or have a boundary point as source and target, each edge is decorated with an arbitrary number of dots not allowed to pass through the crossings. Such a diagram is said \emph{normally ordered} if all its bubbles are clockwise oriented and located in the leftmost region, and if all dots are either on a bubble or a segment pointing toward a boundary (or in the opposite direction). In a similar fashion than \cite[Lemma 5.2.6]{AL16}, we prove that each $2$-cell of $\aob_2^\ell$ can be rewritten with respect to $\ER$ into a linear combination of diagrams whose normal forms with respect to $E$ are normally ordered dotted oriented Brauer diagrams. As a consequence, we get from \ref{T:BasisByConfluenceModulob} that the set of such diagrams with $1$-source $u$ and $1$-target $v$ form a basis of the $\K$-vector space $\aob_2 (u,v)$, and we recover the result from \cite{BCNR14,AL16}.

%

\begin{small}
\renewcommand{\refname}{\Large\textsc{References}}
\bibliographystyle{plain}
\bibliography{Bibliography}
\end{small}
\end{document}